\documentclass[12pt, letterpaper]{article}

\usepackage[utf8]{inputenc}

\usepackage[backend=biber,
style=numeric-comp, sorting=nyt, doi=false, isbn=false, date=year]{biblatex}

\AtEveryBibitem{\clearfield{url} \clearfield{note}}

\DeclareNameAlias{author}{family-given}
\DeclareNameAlias{editor}{family-given}

\AtBeginBibliography{%
}

\usepackage[pdftex,colorlinks=true,linkcolor=blue,citecolor=blue,urlcolor=red,unicode=true,hyperfootnotes=true,bookmarksnumbered]{hyperref}
\usepackage[margin=1truein]{geometry}

\usepackage{amsfonts}

\usepackage{wrapfig}

\usepackage{caption}
\usepackage{subcaption}

\usepackage{amsfonts, amssymb, amsmath, amsthm}
\usepackage{mathtools}
\usepackage{enumerate}
\usepackage{verbatim}

\usepackage{mathrsfs,amsfonts,mathtools}
\usepackage{amsmath}
\usepackage{amssymb}

\usepackage{amsthm}
\usepackage{thmtools}

\theoremstyle{plain}
\newtheorem{theorem}{Theorem}[section]
\newtheorem{lemma}[theorem]{Lemma}

\newtheorem{proposition}[theorem]{Proposition}
\newtheorem{corollary}[theorem]{Corollary}

\theoremstyle{definition} 
\newtheorem{example}[theorem]{Example}
\newtheorem{question}[theorem]{Question}

\newtheorem*{remark*}{Remark}
\newtheorem{definition}{Definition}
\newtheorem*{definition*}{Definition}

\renewcommand{\emptyset}{\varnothing}

\renewcommand{\epsilon}{\varepsilon}
\renewcommand{\phi}{\varphi}
\renewcommand{\kappa}{\varkappa}
\newcommand{\R}{\mathbb{R}}
\newcommand{\Q}{\mathbb{Q}}
\newcommand{\Z}{\mathbb{Z}}
\newcommand{\Nz}{\Z_{\ge0}}

\newcommand{\br}[1]{\left({#1}\right)}
\newcommand{\fbr}[1]{\left\{#1\right\}}
\newcommand{\lbr}[1]{\left|{#1}\right|}
\renewcommand{\sp}[1]{\langle{#1}\rangle}

\newcommand{\wt}[1]{\widetilde{#1}}

\newcommand{\mc}[1]{\mathcal{#1}}
\newcommand{\mb}[1]{\mathbf{#1}}
\newcommand{\conv}[2]{{#1}^{#2}}
\newcommand{\fconv}[2]{{#1}^{#2}}

\newcommand{\wsat}{\operatorname{\mathrm{wsat}}}
\newcommand{\wdir}{\operatorname{\mathrm{c-wsat}}}
\newcommand{\Wsat}{\mathrm{wSAT}}
\newcommand{\seq}{\subseteq}
\newcommand{\disjointcup}{\cup}
\newcommand{\bigdisjointcup}{\bigcup}
\newcommand{\timesprod}{\prod}

\newcommand{\tprod}[2]{K^{#1}_{#2}}

\DeclareMathOperator{\rk}{rk}
\DeclareMathOperator{\sgn}{sgn}
\DeclareMathOperator{\inv}{inv}
\DeclareMathOperator{\supp}{supp}
\DeclareMathOperator{\downcl}{\downarrow}
\DeclareMathOperator{\Conv}{Conv}
\DeclareMathOperator{\spann}{span}

\title{Weak saturation of tensor product of cliques}

\author{Nikolai Terekhov\thanks{Department of Discrete Mathematics, Moscow Institute of Physics and Technology, Dolgoprudny, Russia; nikolayterek@gmail.com
}}

\date{}

\begin{document}

\maketitle


\begin{abstract}
    Given two hypergraphs $G$ and $H$, the weak saturation number $\wsat(G,H)$ is the minimum number of edges in a spanning subhypergraph $F$ of $G$ such that the missing edges of $F$ can be added one at a time so that each added edge creates a copy of $H$.

    In this work, we determine weak saturation numbers for the case when $G$ and $H$ are tensor product of cliques, generalizing a result of Moshkovitz and Shapira (Journal of Combinatorial Theory, Series B, 2015), who found the exact values of $\wsat\br{K^d_{n_1,\ldots,n_d},\ K^d_{r_1,\ldots,r_d}}$.

    The proof also yields results for colored weak saturation numbers $\wdir(G,H)$ of colored hypergraphs $G$ and $H$, where the colorings of the copies of $H$ must be compatible with the coloring of $G$. We determine these numbers when $G$ and $H$ are unions of tensor product of cliques, generalizing a result of Bulavka, Tancer, and Tyomkyn (Combinatorica, 2023), who determined $\wdir(K^q_{n_1,\ldots,n_d}, K^q_{r_1,\ldots,r_d})$.

    Moreover, our proof allows us to generalize a result of  Balogh, Bollob\'{a}s, Morris, and Riordan (Journal of Combinatorial Theory, Series A, 2012) by determining  colored weak saturation numbers $\wdir(K^d_{n_1,\ldots,n_d},\fbr{K^d_{r_1,\ldots,r_d}}_{\mb{r}\in \mc{R}})$ for an arbitrary family $\mc{R}$. The quantity $\wdir(G,\mc{H})$ extends colored weak saturation by allowing, at each step, the creation of a colored copy of any hypergraph in the fixed family of hypergraphs $\mc{H}$.
\end{abstract}

\section{Introduction}

Cellular automata, introduced by von Neumann~\cite{1966Neumann} after Ulam's suggestion~\cite{1950Ulam}, are used to model a variety of processes in physics, chemistry, biology, and cryptography. Bollobás~\cite{1968Bollobas} introduced graph bootstrap percolation, which is a particular case of monotone cellular automata. Graph bootstrap percolation is a substantial generalisation of $r$-neighbourhood bootstrap percolation, which has applications in physics --- see, for example,~\cite{2003Adler,2002Fontes,2011Morris}.
Given hypergraphs $G$ and $H$, an $H$-bootstrap percolation process is a sequence of hypergraphs $F_0 \seq F_1 \seq \ldots \seq  F_m$ such that, for each $i\ge 1$, $F_i$ is obtained from $F_{i-1}$ by adding an edge that creates a new copy of $H$. A spanning subhypergraph $F$ of $G$ is called weakly \mbox{$H$-saturated} in $G$ if there exists an $H$-bootstrap percolation process $F=F_0 \seq F_1 \seq \ldots \seq F_m=G$. In this case, we write $F \in \Wsat(G, H)$. The minimum number of edges in such a hypergraph $F$ is denoted $\wsat(G, H)$ and is called the weak saturation number of $H$ in $G$.

Weak saturation numbers have been extensively studied
\cite{
    1982Frankl,
    1985Alon,
    1985Kalai,
    1991Erdos,
    1992Tuza,
    2001PikhurkoCount,
    2012BaloghLinearAlgebra,
    2018Morrison,
    2020Hambardzumyan,
    2023Shapira,
    2025TerekhovLinearAlgebra}.
However, in the general case, determining these numbers is quite difficult, and only partial results are known. The most effective method for finding weak saturation numbers is the linear algebraic method introduced in full generality by Kalai~\cite{1985Kalai}. This method has been used to determine the values of $\wsat(K^s_n, K^s_r)$~\cite{1984KalaiRigid, 1985Kalai,1982Frankl,1985Alon}, where $K^s_n$ denotes the complete $s$-uniform hypergraph on $n$ vertices; $\wsat(K^2_n, K^2_{t,t})$ and $\wsat(K^2_n, K^2_{t,t+1})$~\cite{1985Kalai,2021Kronenberg}, where $K^d_{r_1,\ldots,r_d}$ denotes the $d$-uniform complete $d$-partite hypergraph with the $i$-th part of size $r_i$; $\wsat(K^d_{n_1,\ldots,n_d}, K^d_{r_1,\ldots,r_d})$~\cite{1985Alon,2015Moshkovitz}; as well as weak saturation numbers for pyramids~\cite{2001PikhurkoExterior}.

The results presented above show that the linear algebraic method works particularly well for cliques and multipartite hypergraphs. It is therefore natural to consider the following family of hypergraphs, which generalizes both cliques and $d$-uniform complete $d$-partite hypergraphs.

\begin{definition}[\cite{2001PikhurkoExterior}]
    Let $G_1$ and $G_2$ be two hypergraphs with disjoint vertex sets. Their \emph{tensor product} $G_1 \otimes G_2$ is the hypergraph with vertex set
    $V(G_1 \otimes G_2) = V(G_1) \cup V(G_2)$
    and edge set
    $E(G_1 \otimes G_2) = \fbr{e_1 \cup e_2 \mid e_1 \in E(G_1), e_2 \in E(G_2)}$.
\end{definition}

\begin{definition}
    Let $d \ge 1$ be an integer, and let $\mb{s} \in \Nz^d$ and $\mb{r} \in \Nz^d$ be integer vectors. Define the hypergraph
    \[
    \tprod{\mb{s}}{\mb{r}} := \bigotimes_{i \in [d]} K^{s_i}_{[r_i] \times \fbr{i}},
    \]
    where $K^m_T$ is an $m$-uniform clique on the vertex set $T$, and $[n]$ denotes the set $\fbr{1,2,\ldots,n}$.
\end{definition}

The hypergraph $\tprod{\mb{s}}{\mb{r}}$ is closely related to the complete $\mb{s}$-layered hypergraph introduced by Pikhurko~\cite{2001PikhurkoExterior}, although our definition does not include the additional partition structure on the layers.

It is easy to see that this definition generalizes both cliques, which arise when $d=1$, and $d$-uniform complete $d$-partite hypergraphs, since $K^d_{r_1,\ldots,r_d} = \tprod{\mb{s}}{\br{r_1,\ldots,r_d}}$ with $\mb{s} = \br{1,1,\ldots,1}$.
One of the first results on weak saturation for such hypergraphs is due to Alon~\cite{1985Alon}, who determined $\wsat\br{K^d_{n,\ldots,n},\ K^d_{r,\ldots,r}}$. This result was later extended in~\cite{2015Moshkovitz} to $\wsat\br{K^d_{n,\ldots,n},\ K^d_{r_1,\ldots,r_d}}$ for all choices of parameters $\fbr{r_i}_{i\in[d]}$ and $n\ge 1$. In~\cite{2015Moshkovitz} the authors used Alon's result together with combinatorial arguments in their proof. In the present work, we obtain a purely algebraic proof of the following general result that subsumes the result from~\cite{2015Moshkovitz}.

\begin{theorem}\label{thrm:sym-Kn1n2-Kr1r2}
    Let $d \ge 1$ and $s \ge 1$ be integers, and let $\mb{n} \in \Nz^d$ and $\mb{r} \in \Nz^d$ be integer vectors. Suppose that $n_i \ge s$ and $r_i \ge s$ for all $i \in [d]$. Define $\mb{s} \in \Nz^d$ by $s_i = s$ for all $i \in [d]$. Then
    \[
    \wsat(\tprod{\mb{s}}{\mb{n}}, \tprod{\mb{s}}{\mb{r}}) = \prod_{i\in[d]} \binom{n_i}{s} - \tilde{q}(\mb{n}, s, \mb{r}),
    \]
    where\footnote{For a finite set $A$, $\binom{A}{s}$ denotes $\fbr{B \subseteq A\ \Big|\ \lbr{B} = s}$, and $S_d$ denotes the set of all permutations of $[d]$.}
    \[
        \tilde{q}(\mb{n}, s, \mb{r}) = \lbr{\fbr{T \in \timesprod_{i \in [d]} \binom{[n_i]}{s}\ \middle|\ \exists \sigma \in S_d\ \forall i \in [d]\ T_i \in \binom{[n_i] \setminus [r_{\sigma(i)} - s]}{s}}}.
    \]
\end{theorem}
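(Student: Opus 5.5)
We prove matching lower and upper bounds, the lower via Kalai's linear-algebraic method and the upper via an explicit weakly saturated hypergraph. Write $E=\prod_{i\in[d]}\binom{[n_i]}{s}$ for the edge set of $\tprod{\mb{s}}{\mb{n}}$; edges are tuples $T=(T_1,\dots,T_d)$. Let $\mc{Q}\seq E$ be the set counted by $\tilde q$, namely the tuples for which some $\sigma\in S_d$ gives $T_i\seq[n_i]\setminus[r_{\sigma(i)}-s]$ for all $i$.

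\emph{Upper bound.} Take $F=E\setminus\mc{Q}$. We order the edges of $\mc{Q}$, for instance by decreasing $\sum_i\min T_i$ or by some fixed lexicographic order, and add them one at a time. For $T\in\mc{Q}$ with witness $\sigma$, set $R_i=[r_{\sigma(i)}-s]\cup T_i$. This has size exactly $r_{\sigma(i)}$, because $T_i$ avoids $[r_{\sigma(i)}-s]$. Then $\prod_i K^{s}_{R_i}$ is a copy of $\tprod{\mb{s}}{\mb{r}}$ with parts permuted by $\sigma$, so it is isomorphic to $\tprod{\mb{s}}{\mb{r}}$ once the coordinates are relabelled. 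Copies of $H$ in $G$ may use the parts in any order, which is exactly why the permutation appears. It remains to check that every other edge $T'$ of this copy is either outside $\mc{Q}$ or already added. Any $T'\neq T$ in the copy has some $T'_i$ meeting $[r_{\sigma(i)}-s]$, so it is ``smaller'' than $T$ in a suitable order. The subtlety is that $T'$ might still lie in $\mc{Q}$ through a different permutation $\sigma'$. I would therefore process $\mc{Q}$ in increasing order of a potential such as $\sum_i\max T_i$ (or colex order) and verify that such $T'$ precede $T$. This step needs care but should be elementary.

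\emph{Lower bound.} Here we construct vectors $\{v_T\}_{T\in E}$ in a space $W$ of dimension $|E|-|\mc{Q}|$ such that for every copy $H'$ of $\tprod{\mb{s}}{\mb{r}}$ in $G$, the vectors $\{v_T\}_{T\in H'}$ are linearly dependent with every coefficient nonzero. Then each added edge lies in the span of the previous ones, so $|F|\ge\dim\spann\{v_T\}=|E|-|\mc{Q}|$. The natural candidate is an exterior-algebra construction in the style of Kalai and Alon.
\begin{itemize}
\item For each $i$, choose generic vectors $x_{i,1},\dots,x_{i,n_i}$ and take the wedges $\bigwedge_{j\in T_i}x_{i,j}$, composed with projections.
\item Let $v_T$ be the image of $\bigotimes_i\bigwedge_{j\in T_i}x_{i,j}$ in a quotient $\bigotimes_i\Lambda^s\R^{n_i}$ that kills the spans responsible for $\tprod{\mb{s}}{\mb{r}}$-dependencies.
\end{itemize}
For a single factor, the clique case $\wsat(K^s_n,K^s_r)=\binom{n}{s}-\binom{n-r+s}{s}$ comes from projecting to $\Lambda^s$ of an $(r-s)$-dimensional generic quotient. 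In the product, the quotient must accommodate every permutation $\sigma$ simultaneously. The right target is therefore the sum over $\sigma$ of the subspaces $\bigotimes_i\Lambda^s(U_{i,\sigma(i)})$, where $U_{i,k}$ is a generic subspace of codimension $r_k-s$ in $\R^{n_i}$; we map $v_T$ to its image in the dual of this sum.

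\emph{Main obstacle.} The hardest step is computing the dimension of this sum of tensor-product subspaces and showing it equals $|\mc{Q}|$. The natural route is to choose the flags $U_{i,k}$ nested in $k$ (sorted by $r_k$) and adapted to a common basis, so that each $U_{i,k}$ is spanned by the coordinate vectors $e_j$, $j>r_k-s$. The sum is then spanned by the monomials $\bigotimes e_{T_i}$ with $T\in\mc{Q}$, giving dimension exactly $|\mc{Q}|$. Genericity is then needed to show the spanned dimension is not smaller, via a Gröbner or shifting-type argument matching the upper-bound order. Verifying the dependency property for each copy of $H$ requires checking that $\bigotimes_i\Lambda^s(\R^{R_i})$ meets the annihilator in a suitable one-dimensional space. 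This follows factorwise from the clique case, since $\Lambda^s$ of an $r$-dimensional space modulo a codimension-$(r-s)$ generic subspace leaves a one-dimensional quotient, with all coefficients nonzero by genericity.
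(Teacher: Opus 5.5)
Your architecture matches the paper's. Your $F=E\setminus\mc Q$ is exactly the upper-bound hypergraph of Subsection~\ref{subsec:upper-bound-largest-s} for the family of all permutations of $\mb r$. Your $K=\sum_\sigma\bigotimes_i\Lambda^s(U_{i,\sigma(i)})$ is the single-$\mb s$ case of the paper's kernel $U$. However, the lower bound is left open at the step you call hardest, and the tool you propose for it points the wrong way.

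\textbf{Where genericity must go in the lower bound.} Kalai's lemma needs only $\dim K\le|\mc Q|$. It needs this in a configuration where, for every copy, the element spanning $\bigotimes_i\Lambda^s\bigl(U_{i,\sigma(i)}\cap\spann\fbr{x_{i,j}: j\in R_i}\bigr)$ has all coefficients nonzero in the edge basis.
\begin{itemize}
\item Comparing with the coordinate configuration by degeneration or semicontinuity only yields $\dim K\ge|\mc Q|$ in generic position, which is the useless direction.
\item If the $U_{i,k}$ are themselves generic subspaces chosen independently, as your definition of the target allows, $K$ really is too large. For $d=2$, $s=1$, $\mb r=(2,2)$, $n_1=n_2=n$ one gets $\dim K=2(n-1)^2-(n-2)^2$, i.e.\ a bound of $2$ instead of $2n-1$.
\item If flags and edge vectors are both coordinate, then $\dim K=|\mc Q|$, but each copy's element is a single monomial or zero, so Kalai's condition fails.
\end{itemize}
The missing idea, supplied by the paper's colorful generic pair of bases, is to put all the genericity into the relative position of two bases. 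In each colour fix an orthonormal basis $\fbr{f_v}$ whose transition matrix $M$ to the edge basis $\fbr{e_v}$ has all square minors nonzero (Lemma~\ref{lemma:generic-matrix-exists}). Then use one flag per colour, coordinate in the $f$-basis: $U_{i,k}=\spann\fbr{f_j: j>r_k-s}$ in the $i$-th factor. Now $K=\spann\fbr{\bigotimes_i f_{T_i}: T\in\mc Q}$, so $\dim K=|\mc Q|$ with nothing left to prove. For a copy with parts $R_i$, $|R_i|=r_{\sigma(i)}$, the element $\bigotimes_i\bigl(f_{A_i}\llcorner e_{R_i}\bigr)$ with $A_i=[r_{\sigma(i)}-s]$ lies in $K$. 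Its coefficient on each edge $e_W$ of the copy is $\pm\prod_i\det M_{A_i|R_i\setminus W_i}\neq0$, as in Proposition~\ref{prop:llcorner-give-all-edges}. Keeping the $x_{i,j}$ of your first bullet generic and the flags coordinate in the standard basis is this same construction. Once you see that, the ``hardest step'' is immediate and no Gr\"obner or shifting argument is needed.

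\textbf{Uncolored copies.} You check Kalai's condition only for copies whose $d$ parts sit in distinct colour classes. The lower bound must cover every uncolored copy $H'$ of $\tprod{\mb s}{\mb r}$ in $\tprod{\mb s}{\mb n}$, and you need the reduction of Theorem~\ref{thrm:wsat-eq-wdir-constant-single-s} to see these are harmless.
\begin{itemize}
\item A part with $r_i\ge s+1$ lies in a single class. Otherwise pick $x\in R_i\cap N_j$, $y\in R_i\setminus N_j$ and an edge $W$ of $H'$ with $x\in W$, $y\notin W$. Then $(W\setminus\fbr{x})\cup\fbr{y}$ is again an edge but has one fewer vertex in $N_j$, a contradiction.
\item Two big parts cannot share a class, since an edge would then have at least $2s$ vertices there.
\item Parts of size exactly $s$ lie in every edge and can genuinely be scattered (e.g.\ $d=3$, $s=2$, $\mb r=(2,2,5)$). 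They avoid the classes of the big parts and meet each remaining class in exactly $s$ vertices.
\end{itemize}
Hence $E(H')$ equals the edge set of an aligned copy with part sizes $r_{\sigma(i)}$, which is what your kernel element requires.

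\textbf{Ordering in the upper bound.} This is easier than you fear. Every other edge $T'$ of the copy satisfies $T'_i\preceq T_i$ in colex for all $i$, strictly for some $i$, because it trades elements of $T_i$ for elements of $[r_{\sigma(i)}-s]$, all below $\min T_i$. So any linear extension of the componentwise colex order works, whichever $\sigma'$ puts $T'$ into $\mc Q$; the paper uses lex order on the decreasingly sorted tuples. By contrast, your first suggestion, decreasing $\sum_i\min T_i$, is the wrong direction, and $\sum_i\max T_i$ alone has ties.
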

The value of $\wsat(\tprod{\mb{s}}{\mb{n}}, \tprod{\mb{s}}{\mb{r}})$ in the theorem arises naturally from the upper bound construction presented in Subsection~\ref{subsec:upper-bound-largest-s} for a more general setting.

Our proof uses a reduction to colored weak saturation numbers $\wdir(G,H)$, in which the host hypergraph $G$ and the pattern hypergraph $H$ are additionally equipped with colorings (not necessarily proper) in $d \ge 1$ colors. This notion generalizes the usual weak saturation numbers by requiring that, in the $H$-bootstrap percolation process, the coloring of each copy of $H$ in $G$ must be compatible with the coloring of $G$. A formal definition is given in Section~\ref{sec:main-thrm}. For a hypergraph $\tprod{\mb{s}}{\mb{r}}$, we use the coloring $c:\bigcup_{i\in[d]}[r_i]\times\fbr{i}\to [d]$ such that $c([r_i]\times\fbr{i})=\fbr{i}$ for all $i\in[d]$.

Colored weak saturation numbers are, in some sense, similar to the weak saturation numbers for layered hypergraphs introduced in~\cite{2001PikhurkoExterior}. The difference is that, in our setting, we do not impose any conditions relating the edges of a hypergraph to its coloring.

In order to overview known results on colored weak saturation, we introduce the following family of hypergraphs.

\begin{definition}
Let $G_1$ and $G_2$ be two hypergraphs. Their \emph{union} is the hypergraph $G_1 \cup G_2$ with vertex set $V(G_1 \cup G_2) = V(G_1) \cup V(G_2)$ and edge set $E(G_1 \cup G_2) = E(G_1) \cup E(G_2)$.
\end{definition}

\begin{definition}
    Let $d \ge 1$ be an integer, let $\mb{r} \in \Nz^d$ be an integer vector, and let $\mc{S} \subseteq \Nz^d$ be a non-empty finite family of integer vectors. Define the hypergraph
    \[
    \tprod{\mc{S}}{\mb{r}} := \bigcup_{\mb{s} \in \mc{S}} \tprod{\mb{s}}{\mb{r}}.
    \]
\end{definition}

This definition generalizes the $q$-uniform complete $d$-partite hypergraphs, since $K^q_{r_1,\ldots,r_d} = \tprod{\mc{S}}{\br{r_1,\ldots,r_d}}$ with $\mc{S} = \fbr{\mb{s} \in \fbr{0,1}^d \mid \sum_{i \in [d]} s_i = q}$, where $K^q_{r_1,\ldots,r_d}$ denotes the $q$-uniform complete $d$-partite hypergraph with the $i$-th part of size $r_i$.

The value of $\wdir(\tprod{\mb{s}}{\mb{n}}, \tprod{\mb{s}}{\mb{r}})$ can be deduced either from Alon's version of the two families theorem~\cite{1985Alon} or from Pikhurko's result for layered hypergraphs~\cite{2001PikhurkoExterior} for all choices of parameters $\mb{s}, \mb{n}, \mb{r} \in \Nz^d$. Bulavka, Tancer, and Tyomkyn~\cite{2023Bulavka} determined
$\wdir(\tprod{\mc{S}}{\mb{n}}, \tprod{\mc{S}}{\mb{r}})$
for
$\mc{S} = \fbr{\mb{s} \in \fbr{0,1}^d \mid \sum_{i \in [d]} s_i = q}$,
for all choices of parameters $2\le q \le d$ and $\mb{n}, \mb{r} \in \Nz^d$ such that
$n_i \ge r_i \ge 1$ for all $i \in [d]$.
In this paper, we substantially generalize their result by determining
$\wdir(\tprod{\mc{S}}{\mb{n}}, \tprod{\mc{S}}{\mb{r}})$
for all finite non-empty families $\mc{S}$.

\begin{theorem}\label{thrm:wdir-single-r}
    Let $d \ge 1$ be an integer, let $\mb{r} \in \Nz^d$ and $\mb{n} \in \Nz^d$ be integer vectors, and let $\mc{S} \subseteq \Nz^d$ be a non-empty finite family of integer vectors. Suppose that for all $\mb{s} \in \mc{S}$ and all $i \in [d]$ we have $n_i \ge r_i \ge s_i$. Define
\[
\downcl \mc{S} := \fbr{\mb{m} \in \Nz^d \mid \exists \mb{s} \in \mc{S}\ \forall i \in [d]\ m_i \le s_i}.
\]
Then
\[
\wdir(\tprod{\mc{S}}{\mb{n}}, \tprod{\mc{S}}{\mb{r}})
= \sum_{\mb{s} \in \mc{S}} \prod_{i \in [d]} \binom{n_i}{s_i}
- \sum_{\mb{m} \in \downcl \mc{S}} \prod_{\substack{i = 1 \\ m_i \ne 0}}^{d}
\binom{m_i - 1 + n_i - r_i}{m_i}.
\]
\end{theorem}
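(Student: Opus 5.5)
Both bounds will come from one explicit family indexed by $\downcl\mc{S}$. We identify each colored edge of $\tprod{\mc{S}}{\mb{n}}$ with a tuple $T=(T_1,\ldots,T_d)$ with $T_i\in\binom{[n_i]}{s_i}$ for some $\mb{s}\in\mc{S}$. In this language a colored copy of $\tprod{\mc{S}}{\mb{r}}$ is given by choosing sets $R_i\subseteq[n_i]$ with $|R_i|=r_i$. Its edges are all tuples $T$ with $T_i\subseteq R_i$ and $(|T_i|)_i\in\mc{S}$.

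\textbf{Upper bound.} Put $t_i=n_i-r_i$ and call the elements of $[t_i]=\{1,\ldots,t_i\}$ ``low''. Let $F$ consist of all edges $T$ such that, for some coordinate $i$ with $T_i\ne\emptyset$, the set $T_i$ contains an element of $[r_i-s_i]$ shifted appropriately. A cleaner form of this condition is the following. The missing edges should be exactly those $T$ for which every nonempty $T_i$ satisfies $\min T_i> r_i - 1$ in a suitable reverse ordering. Equivalently, writing $m_i=|T_i|$, each nonempty $T_i$ is an $m_i$-subset of a fixed $(m_i-1+n_i-r_i)$-element window. We count the missing edges by grouping them by the size vector $\mb{m}$, summed over $\downcl\mc{S}$. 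A single missing edge can be completed by several $\mb{s}\supseteq\mb{m}$ only if we fill coordinates with new elements. To make the count exact, I would instead index missing edges by their ``essential part'' $\mb{m}\le\mb{s}$. The remaining $s_i-m_i$ elements are forced to be the smallest elements of $[n_i]$, as in a standard shifting/Kalai-type construction. This gives a bijection between missing edges and the tuples counted by $\prod_{m_i\ne0}\binom{m_i-1+n_i-r_i}{m_i}$.

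We then add the missing edges in the order of decreasing $\mb{m}$, and colexicographically within the same $\mb{m}$. For each missing edge $T$ we choose $R_i$ to consist of $T_i$ together with the first $r_i-|T_i|$ elements. The window size $m_i-1+n_i-r_i$ is exactly what guarantees that this choice has $|R_i|=r_i$. We then check that every other edge of this copy is either in $F$ or was added earlier.

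\textbf{Lower bound.} I would use Kalai's linear-algebraic method in the form of a tensor product of exterior-algebra constructions. For each coordinate $i$ take a generic $r_i\times n_i$ (or $(n_i-r_i)$-dimensional) setup. The standard fact for a single clique is that $\wedge^{s}$ of a generic projection gives a matroid whose rank is $\binom{n}{s}-\binom{n-r+s-1}{s}$-type. Then we form the direct sum over $\mb{s}\in\mc{S}$ of tensor products $\bigotimes_i \wedge^{s_i}$. Each copy of $\tprod{\mc{S}}{\mb{r}}$ must produce a linear dependency involving all of its edges, and this is what yields the percolation bound. To get the dependency, glue the blocks by a boundary-like map $\partial$ that lowers one $s_i$. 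Then a copy supplies a dependency because the restriction to $r_i$ coordinates kills the top-dimensional part. The rank of the resulting space should be computed by inclusion--exclusion over $\downcl\mc{S}$, and the term for $\mb{m}$ is the dimension of $\bigotimes_{m_i\neq0}\wedge^{m_i}$ of an $(n_i-r_i)$-dimensional quotient twisted by the shift $m_i-1$. The main obstacle is designing this gluing so that the kernel dimension is exactly the subtracted sum. I would try first to take the quotient of $\bigoplus_{\mb{s}}\bigotimes\wedge^{s_i}\R^{n_i}$ by the images of $\partial$ from $\downcl\mc{S}$, and compute ranks via a Koszul-type exactness for each factor.
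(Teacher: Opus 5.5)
\paragraph{Upper bound: right construction, wrong order.}
Once your self-corrections are made precise, your upper bound is the paper's construction, i.e.\ Subsection~\ref{subsec:upper-bound-smallest-r} with $\mc R=\fbr{\mb r}$, of which this statement is the special case. To make it precise you must do the following:
\begin{itemize}
\item For each $\mb m\in\downcl\mc S$, fix \emph{one} $\mb s_{\mb m}\in\mc S$ with $\mb s_{\mb m}\ge\mb m$. Otherwise there is no bijection with the counted tuples.
\item Take $T_i\in\binom{[n_i]\setminus[r_i-m_i+1]}{m_i}$, pad with $[s_{\mb m,i}-m_i]$, and use the copy $R_i=[r_i-m_i]\cup T_i$.
\end{itemize}
Note that $|R_i|=r_i$ only needs $T_i\cap[r_i-m_i]=\emptyset$. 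The extra exclusion of $r_i-m_i+1$ from the window is what makes $(\mb m,T)$ recoverable and what the verification uses.

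The real problem is that your processing order is backwards. Let $W'$ be another missing edge inside the copy for $W$. If $m'_i>m_i$, then $R_i$ has only $m'_i-1$ elements exceeding $r_i-m'_i+1$, so $T'_i$ cannot fit. Hence $\mb m'\le\mb m$ componentwise, and $\mb m'=\mb m$ forces $T'=T$. So the copy can contain missing edges with \emph{smaller} $\mb m$, and these must be added \emph{before} $W$. For example, it contains the $\mb 0$-edge $\bigcup_i[s_{\mb 0,i}]\times\fbr{i}$ whenever $s_{\mb 0,i}\le r_i-m_i$ for all $i$.

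Concretely, take $d=1$, $\mc S=\fbr{(2)}$, $n=5$, $r=4$. The missing edges are $\fbr{1,2},\fbr{1,5},\fbr{4,5}$. Your order adds $\fbr{4,5}$ first via $R=\fbr{1,2,4,5}$, but that copy still lacks $\fbr{1,2}$ and $\fbr{1,5}$. The fix is to process in increasing order of $\mb m$, using any linear extension of the componentwise order; the paper's lexicographic order on decreasingly sorted $T$ is one such order. No ordering within a fixed $\mb m$ is needed.

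\paragraph{Lower bound: the gluing is missing.}
This is the genuine gap. You describe a hoped-for complex and say yourself that designing the gluing is the main obstacle, so nothing is proved there. The gluing is not cosmetic.

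A direct sum of per-layer dependency spaces is a valid $U$ for Lemma~\ref{lemma:lower-bound-wdir-via-kernel}. But its dimension is $\sum_{\mb s\in\mc S}\sum_{\mb m\le\mb s}\prod_{m_i\ne0}\binom{m_i-1+n_i-r_i}{m_i}$, so each $\mb m$ is counted once per $\mb s\ge\mb m$. Already for $\mc S=\fbr{(1,0),(0,1)}$ this gives the bound $r_1+r_2-2$ instead of the true value $r_1+r_2-1$.

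The missing idea is to use \emph{one} generator per vertex set $R$ that serves all layers at once:
\begin{itemize}
\item Take a colorful generic pair of orthonormal bases $(e,f)$. Put $g_{\mb r}=\sum_{\mb s\in\mc S}\pm\bigwedge_i f_{([r_i]\setminus[s_i])\times\fbr{i}}$ and $U=\spann\fbr{g_{\mb r}\llcorner e_R}$. The signs are irrelevant for a single $\mb r$.
\item Colorful genericity (Proposition~\ref{prop:llcorner-give-all-edges}) gives $\supp(g_{\mb r}\llcorner e_{V(\wt H)})=E(\wt H)$, across all layers simultaneously.
\item For the dimension, expand $e_R$ in the $f$-basis. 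Then $g_{\mb r}\llcorner f_S$ vanishes unless $[r_i]\setminus[s_i]\seq S_i$ for all $i$, for some $\mb s\in\mc S$.
\item Let $m_i$ be the largest element of $[r_i]\setminus S_i$, or $0$ if that set is empty. The condition above says exactly that $\mb m(S)\in\downcl\mc S$.
\item The number of such $S$ is exactly the subtracted sum, so each $\mb m$ is counted once.
\end{itemize}

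Incidentally, the single-clique rank you quote is off. The subtracted term there is $\binom{n-r+s}{s}$, the sum of the $m$-terms over $0\le m\le s$. Your $\binom{n-r+s-1}{s}$ is only the top term $m=s$.
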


Colored weak saturation can be used to deduce results about uncolored weak saturation. This is possible because, in some cases, an uncolored copy of a hypergraph $H$ arising in the $H$-bootstrap percolation process can be viewed as a colored copy of a hypergraph belonging to a suitable family $\mc{H}$. In such cases, the value of $\wsat(\tprod{\mb{s}}{\mb{n}},H)$ is equal to the value of $\wdir(\tprod{\mb{s}}{\mb{n}},\mc{H})$, where $\wdir(G,\mc{H})$ generalizes colored weak saturation by allowing, at each step, the creation of a colored copy of any hypergraph in $\mc{H}$. A formal definition is given in Section~\ref{sec:main-thrm}.


In order to state our result on colored weak saturation numbers for a family of pattern hypergraphs, we introduce the following families.
\begin{definition}
    Let $d \ge 1$ be an integer, let $\mb{s}\in \Nz^d$ be integer vector, and let $\mc{R} \subseteq \Nz^d$ be a non-empty finite family of integer vectors. Define the family of hypergraphs
    \[
        \tprod{\mb{s}}{\mc{R}}:=\fbr{\tprod{\mb{s}}{\mb{r}}\mid \mb{r}\in\mc{R}}.
    \]
\end{definition}

For such families of hypergraphs, the result of \cite{2012BaloghLinearAlgebra}, after reformulation in terms of colored weak saturation, gives the value of $\wdir(\tprod{\mb{s}}{\mb{n}}, \tprod{\mb{s}}{\mc{R}(k, \mb{t})})$ for $d\ge 1$, $\mb{n}\in \Nz^d$, $k\in\Nz$, $\mb{t}\in \Nz^d$, $\mb{s}=(1,1,\ldots,1)$, and
\[\mc{R}(k, \mb{t}) = \fbr{\mb{r}\in\Nz^d\ \Big|\ \bigl|\fbr{i\in [d]\mid r_i=t_i}\bigr|=k\text{ and }\bigl|\fbr{i\in [d]\mid r_i=1}\bigr|=d-k},\]
where $1\le k \le d$ and $n_i\ge t_i\ge 2$ for all $i\in[d]$. In this work, we substantially generalize their result by determining $\wdir(\tprod{\mb{s}}{\mb{n}}, \tprod{\mb{s}}{\mc{R}})$ for all finite non-empty families $\mc{R}$.

\begin{theorem}\label{thrm:wdir-multiply-r}
    Let $d \ge 1$ be an integer, let $\mb{n} \in \Nz^d$ and $\mb{s}\in \Nz^d$ be integer vectors, and let $\mc{R} \subseteq \Nz^d$ be a non-empty finite family of integer vectors. Suppose that for all $\mb{r} \in \mc{R}$ and $i \in [d]$ we have $n_i \ge s_i$ and $r_i \ge s_i$. Define the family of hypergraphs
\[
    \tprod{\mb{s}}{\mc{R}}:=\fbr{\tprod{\mb{s}}{\mb{r}}\mid \mb{r}\in\mc{R}}.
\]
Then
\[
    \wdir(\tprod{\mb{s}}{\mb{n}}, \tprod{\mb{s}}{\mc{R}}) = \prod_{i\in[d]} \binom{n_i}{s_i} - q(\mb{n}, \mb{s}, \mc{R}),
    \]
where
\[
    q(\mb{n}, \mb{s}, \mc{R}) = \lbr{\fbr{T \in \timesprod_{i \in [d]} \binom{[n_i]}{s_i}\ \middle|\ \exists r\in\mc{R}\ \forall i \in [d]\ T_i \in \binom{[n_i] \setminus [r_{i} - s_i]}{s_i}}}.
\]
\end{theorem}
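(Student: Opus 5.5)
We prove the two inequalities separately. The upper bound comes from an explicit construction; the lower bound comes from Kalai's linear algebraic method with a carefully chosen exterior-algebra (or symmetric-power) representation.

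\textbf{Upper bound.} Call $T=(T_1,\ldots,T_d)\in\timesprod_{i}\binom{[n_i]}{s_i}$ \emph{good} if there is $\mb{r}\in\mc{R}$ with $T_i\subseteq[n_i]\setminus[r_i-s_i]$ for all $i$. Let $F$ consist of all edges $\bigcup_i T_i\times\{i\}$ with $T$ not good; this $F$ has exactly $\prod_i\binom{n_i}{s_i}-q(\mb{n},\mb{s},\mc{R})$ edges.

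We add the good edges in a suitable order, say in increasing order of $\sum_i\sum_{x\in T_i}x$ (reverse lexicographic on each coordinate would also do). Given a good $T$ with witness $\mb{r}$, take the copy of $\tprod{\mb{s}}{\mb{r}}$ whose $i$-th part is $[r_i-s_i]\cup T_i$; this set has size $r_i$ because $T_i$ avoids $[r_i-s_i]$. Any other edge $T'$ of this copy satisfies $T'_i\subseteq[r_i-s_i]\cup T_i$ with $T'\neq T$. Hence in some coordinate $T'_i$ contains an element of $[r_i-s_i]$, which is smaller than every element of $T_i$, while in the other coordinates $T'_i$ stays inside $[r_i-s_i]\cup T_i$.

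We then check that such a $T'$ is either not good, so it lies in $F$, or is good with a strictly smaller sum, so it was added earlier. Each coordinatewise replacement of elements of $T_i$ by elements of $[r_i-s_i]$ strictly decreases the sum, so the claim holds. The copy is properly colored by construction, so each step is legal.

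\textbf{Lower bound.} We apply Kalai's method. For each colour $i$, choose generic vectors $u_{x,i}\in\R^{n_i}$, $x\in[n_i]$, and associate to an edge $T$ the tensor
\[
\phi(T)=\bigotimes_i \Big(\bigwedge_{x\in T_i}u_{x,i}\Big)\in\bigotimes_i\bigwedge\nolimits^{s_i}\R^{n_i}.
\]
Then compose with a projection $\pi$ onto a quotient $W$ of dimension $\prod_i\binom{n_i}{s_i}-q$. The natural choice is to quotient by the span of the basis tensors $\bigotimes_i e_{T_i}$ with $T$ good. More invariantly, we take a subspace $U$ that is spanned, for each $\mb{r}\in\mc{R}$, by $\bigotimes_i\big(\bigwedge^{s_i}\spann(e_{r_i-s_i+1},\ldots,e_{n_i})\big)$, and set $W=(\bigotimes_i\bigwedge^{s_i}\R^{n_i})/U$. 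Its dimension is $\prod\binom{n_i}{s_i}-q$, because the good sets form exactly the union of these coordinate boxes.

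Two properties must be verified.
\begin{enumerate}[(i)]
\item The images $\pi\phi(T)$ span $W$. This holds generically, since the $\phi(T)$ already span the whole tensor space.
\item In every properly colored copy of $\tprod{\mb{s}}{\mb{r}}$ with $\mb{r}\in\mc{R}$, the images of the edges are linearly dependent with all coefficients nonzero, so each newly added edge lies in the span of the existing ones.
\end{enumerate}
Given (i) and (ii), the standard argument gives $|F|\ge\dim W$, since the final span $W$ must already be spanned by $\pi\phi(F)$.

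Property (ii) is the main obstacle. For a copy with parts $A_i$, $|A_i|=r_i$, the tensors $\phi(T)$ span $\bigotimes_i\bigwedge^{s_i}\spann(u_{A_i,i})$. We need this span to meet $U$ in a space of dimension strictly more than $\prod\binom{r_i}{s_i}-\prod\binom{r_i}{s_i}$ in an appropriate sense. More precisely, we need the projection from this span to $W$ to be non-injective with a kernel vector of full support.

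The plan is to work in the dual: replace the fixed coordinate flag by a generic change of basis, so that $U$ becomes the span of $\bigotimes_i\bigwedge^{s_i}L_{i,\mb{r}}$, where $L_{i,\mb{r}}$ is a generic subspace of codimension $r_i-s_i$. Then $\spann(u_{A_i,i})\cap L_{i,\mb{r}}$ has dimension exactly $s_i$. Hence the intersection $\bigotimes_i\bigwedge^{s_i}(\spann(u_{A_i,i})\cap L_{i,\mb{r}})$ is one-dimensional, and it is spanned by a decomposable tensor whose expansion in the $\phi(T)$, $T\subseteq A$, has all coefficients nonzero by genericity.

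This yields the required dependence. The dimension count for $W$ still holds for generic flags, by semicontinuity combined with the upper bound, which already forces $\dim W\le|F|$.
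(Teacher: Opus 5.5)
Your upper bound is the paper's construction from Subsection~\ref{subsec:upper-bound-largest-s}: with $\tilde{\mb s}=\mb s$, the removed edges are exactly your good $T$. Ordering by $\sum_i\sum_{x\in T_i}x$ instead of lexicographically works just as well.

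One caveat. The copy on $[r_i-s_i]\cup T_i$ lies inside $[n_i]$ only if $r_i\le n_i$, and goodness of $T$ forces this only when $s_i\ge 1$. If some $s_i=0$ and $r_i>n_i$, the formula as printed can fail. For example, take $d=1$, $\mb s=(0)$, $\mb n=(1)$, $\mc R=\{(2)\}$: there are no copies, so $\wdir=1$, while the formula gives $0$. So $\mc R$ should be read as $\mc R(\mb n)$, as in Theorem~\ref{thrm:main-theorem}.

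Your lower bound is correct in substance but takes a different, more elementary route than the paper. The paper works in $\bigwedge\bigl(\bigoplus_i\R^{n_i}\bigr)$ with a colorful generic pair of orthonormal bases and takes $U$ to be the span of the kernel vectors $g_{\mb r}\llcorner e_R$. The real work there is proving $\dim U\le q$: rewrite in the $f$-basis, split by $\mb m\in\downcl\mc S$, and track signs via Proposition~\ref{prop:llcorner-sign-decompose} so that contributions from different $\mb r$ coincide.

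You reverse this. You fix $U$ in advance as a sum of coordinate boxes over a nested flag, so $\dim U=q$ is immediate and no inclusion--exclusion is needed. Then in each copy you exhibit the kernel vector $\bigotimes_i w_i$, where $w_i$ spans $\bigwedge^{s_i}(\spann(u_{A_i,i})\cap E_{i,\mb r})$ and $E_{i,\mb r}:=\spann(e_{r_i-s_i+1},\dots,e_{n_i})$. The paper's factor $f_{([r_i]\setminus[s_i])\times\{i\}}\llcorner e_{R_i}$ is the same kind of object, the wedge of a basis of such an intersection, with your $e$ and $u$ playing the roles of the paper's $f$ and $e$.

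What the heavier machinery buys is handling several shapes at once. Your per-color tensor product is tied to a single $\mb s$. Running it shape by shape yields $\sum_{\mb s\in\mc S}q(\mb n,\mb s,\mc R)$ in place of $q(\mb n,\mc S,\mc R)$. That is strictly larger when $|\mc S|\ge 2$, since the $\mb m=\mb 0$ term is counted $|\mc S|$ times, so it would not give Theorem~\ref{thrm:wdir-single-r}.

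Your last paragraph, however, is wrong as written, and the dualization it defends is unnecessary. Lower semicontinuity of rank gives $\dim U_{\mathrm{gen}}\ge q$, i.e.\ $\dim W\le\prod_i\binom{n_i}{s_i}-q$. The upper bound also gives only $\le$, but you need $\ge$.

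If the $L_{i,\mb r}$ are chosen independently for different $\mb r$, nestedness is lost and the bound genuinely degrades. For $d=1$, $\mb s=(1)$, $\mb n=(5)$, $\mc R=\{(2),(3)\}$, generic subspaces of dimensions $4$ and $3$ span $\R^5$, so $\dim W=0$, whereas $q=4$ and $\wdir=1$. If instead a single invertible $g_i$ moves the whole flag of color $i$, then $\dim U=q$ simply because $\bigotimes_i\bigwedge^{s_i}g_i$ is an isomorphism.

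Cleanest is not to dualize at all. Keep the coordinate flag and require that $P_i=\bigl((u_{x,i})_j\bigr)_{j,x\in[n_i]}$ have all square minors nonzero. Then $\spann(u_{A_i,i})\cap E_{i,\mb r}$ is the image of the kernel of the $(r_i-s_i)\times r_i$ submatrix of $P_i$ with rows $[r_i-s_i]$ and columns $A_i$. So it has dimension $s_i$. Its Pl\"ucker coordinates in the basis $\{u_{x,i}\}_{x\in A_i}$ are, up to a common factor and signs, the complementary maximal minors of that submatrix, hence all nonzero. This is what your ``by genericity'' has to mean, and it gives the full-support dependency directly.
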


\paragraph{Proof Strategy}

Theorem~\ref{thrm:sym-Kn1n2-Kr1r2} follows from Theorem~\ref{thrm:wdir-multiply-r} because, for that choice of parameters, there exists a suitable family $\mc{R}$ such that an uncolored copy of $\tprod{\mb{s}}{\mb{r}}$ in $\tprod{\mb{s}}{\mb{n}}$ can be viewed as a colored copy of a hypergraph from $\tprod{\mb{s}}{\mc{R}}$.

Theorems~\ref{thrm:wdir-single-r} and~\ref{thrm:wdir-multiply-r} follow from a more general result on colored weak saturation numbers, namely Theorem~\ref{thrm:main-theorem}. The essential ingredient of the proof of the lower bound in Theorem~\ref{thrm:main-theorem} is the linear algebraic technique that was used in~\cite{2023Bulavka}. We show that this technique extends to graph bootstrap percolation processes that exploit pattern hypergraphs from a given family $\mc{H}$, rather than a single hypergraph $H$; this extension is essential for Theorem~\ref{thrm:wdir-multiply-r}. We also show that the technique can be adapted to arbitrary families $\mc{S}$, which is needed for Theorem~\ref{thrm:wdir-single-r}.

\paragraph{Paper Structure}

In Section~\ref{sec:main-thrm} we state the main theorem on $\wdir(\tprod{\mc{S}}{\mb{n}}, \tprod{\mc{S}}{\mb{r}})$ in full generality; Theorems~\ref{thrm:wdir-single-r} and~\ref{thrm:wdir-multiply-r} appear as special cases. In Section~\ref{sec:preliminaries} we provide the background on the exterior algebra needed for the linear algebraic technique from~\cite{2023Bulavka}. In Section~\ref{sec:lower-bound-main-thrm} we prove the lower bound in the main theorem, and in Section~\ref{sec:upper-bound-main-thrm} we establish the corresponding upper bound. In Section~\ref{sec:wsat-eq-wdir} we derive corollaries for uncolored weak saturation numbers, including Theorem~\ref{thrm:sym-Kn1n2-Kr1r2}, which is a special case of Corollary~\ref{corollary:wsat-constant-single-s}. In Section~\ref{sec:conclusions} we discuss limitations of the main theorem and related open problems.

\paragraph{Notation}  For an integer $n \ge 1$, let $[n]$ denote the set $\fbr{1,2,\ldots,n}$. For an integer $n \le 0$, we define $[n] = \emptyset$. For a set $T$ and an integer $k \ge 0$, let
$\binom{T}{k} = \fbr{S \subseteq T\ |\ \lbr{S} = k}$.
For $k < 0$, we define $\binom{T}{k} = \emptyset$. For a set $T$ and an integer $k \ge 0$, define the hypergraph $K_T^k$ with vertex set $V(K_T^k) = T$ and edge set $E(K_T^k) = \binom{T}{k}$.

\section{Main Theorem}\label{sec:main-thrm}

We begin by giving a formal definition of weak saturation for a family of pattern hypergraphs.

\begin{definition}
    Let $d \ge 1$ be an integer, let $G$ be a hypergraph, and let $\mc{H}$ be a non-empty family of hypergraphs. A spanning subhypergraph $F$ of $G$ is called \emph{weakly $\mc{H}$-saturated} in $G$ if there exists an ordering $e_1, e_2, \ldots, e_k$ of the edges in $E(G) \setminus E(F)$ such that each edge $e_i$ belongs to a copy $\wt{H}_i$ of some $G \in \mc{H}$ in the hypergraph
    $F \cup \fbr{e_1, \ldots, e_i}$.
    The minimum number of edges in such a subhypergraph $F$ is denoted by $\wsat(G, \mc{H})$ and is called the \emph{uncolored weak saturation number}, or simply the \emph{weak saturation number}, of $\mc{H}$ in $G$.
\end{definition}

To define colored weak saturation, we first introduce the notion of a colored hypergraph.

\begin{definition}
Let $d\ge 1$ be an integer. A $d$-colored hypergraph is a pair $(G, c)$, where $G$ is a hypergraph (not necessarily uniform) and $c$ is a coloring (not necessarily proper), that is, an arbitrary map from $V(G)$ to $[d]$.
\end{definition}


\begin{definition}
Let $d\ge 1$ be an integer, and let $(H, t)$ and $(G, c)$ be $d$-colored hypergraphs. A colored copy of $(H, t)$ in $(G, c)$ is a subhypergraph $\wt{H} \subseteq G$ with the coloring $c|_{V(\wt{H})}$ such that there exists a bijection $f: V(\wt{H}) \to V(H)$ satisfying
\(
    \forall S \subseteq V(\wt{H}),\  S \in E(\wt{H}) \iff f(S) \in E(H),
\)
and
\(
    \forall v \in V(\wt{H}),\  c(v) = t(f(v)).
\)
\end{definition}

\begin{definition}
    Let $d\ge 1$ be an integer, and let $(F, t)$ and $(G, c)$ be $d$-colored hypergraphs. We say that $(F, t)$ is a \emph{colored spanning subhypergraph} of $(G, c)$ if $V(F) = V(G)$, $t = c$, and $E(F) \seq E(G)$.
\end{definition}

The definition of colored weak saturation mirrors that of uncolored weak saturation, but includes additional color constraints.
\begin{definition}
    Let $d \ge 1$ be an integer, let $(G, c)$ be a $d$-colored hypergraph, and let $\mc{H}$ be a non-empty family of $d$-colored hypergraphs. A colored spanning subhypergraph $(F,c)$ of $(G,c)$ is called \emph{weakly $\mc{H}$-saturated} in $(G, c)$ if there exists an ordering $e_1, e_2, \ldots, e_k$ of the edges in $E(G) \setminus E(F)$ such that each edge $e_i$ belongs to a colored copy $(\wt{H}_i,\tilde{t})$ of some $(H, t) \in \mc{H}$ in the $d$-colored hypergraph
    $(F \cup \fbr{e_1, \ldots, e_i}, c)$.
    The minimum number of edges in such a colored subhypergraph $(F,c)$ is denoted by $\wdir((G, c), \mc{H})$ and is called the \emph{colored weak saturation number} of $\mc{H}$ in $(G, c)$.
\end{definition}

We consider colored weak saturation numbers for the following colored hypergraphs.

\begin{definition}
    Let $d\ge 1$ be an integer, let $\mb{r} \in \Nz^d$ be an integer vector, and let $\mc{S} \subseteq \Nz^d$ be a non-empty finite family of integer vectors. Define a coloring $c_{\mb{r}}: \bigcup_{i \in [d]} [r_i] \times \fbr{i} \to [d]$ on $\tprod{\mc{S}}{\mb{r}}$
by setting $c_{\mb{r}}([r_i] \times \fbr{i}) = \fbr{i}$.

To simplify notation, whenever $\tprod{\mc{S}}{\mb{r}}$ is viewed as a colored hypergraph, it is understood to carry this coloring.
\end{definition}


\begin{definition}
Let $d \ge 1$ be an integer, and let $\mc{S} \subseteq \Nz^d$ and $\mc{R} \subseteq \Nz^d$ be a non-empty finite families of integer vectors. Define $\tprod{\mc{S}}{\mc{R}}$ to be the family of hypergraphs
$\fbr{\tprod{\mc{S}}{\mb{r}} \mid \mb{r} \in \mc{R}}$.
\end{definition}

Our main result on colored weak saturation is the following theorem.

\begin{theorem} \label{thrm:main-theorem}
    Let $d \ge 1$ be an integer, let $\mb{n} \in \Nz^d$ be an integer vector, and let $\mc{S} \subseteq \Nz^d$ and $\mc{R} \subseteq \Nz^d$ be non-empty finite families of integer vectors. Suppose that for all $\mb{s} \in \mc{S}$, $\mb{r} \in \mc{R}$, and $i \in [d]$, we have $n_i \ge s_i$ and $r_i \ge s_i$. Define
\begin{align*}
    \downcl \mc{S} &:= \fbr{\mb{m} \in \Nz^d \mid \exists \mb{s} \in \mc{S} \ \forall i \in [d]\ m_i \le s_i}, \\
    \mc{R}(\mb{n}) &:= \fbr{\mb{r} \in \mc{R} \mid \forall i \in [d]\ r_i \le n_i}.
\end{align*}
Then
\begin{equation}\label{eq:main-theorem}
    \wdir(\tprod{\mc{S}}{\mb{n}}, \tprod{\mc{S}}{\mc{R}}) \ge \sum_{\mb{s} \in \mc{S}} \prod_{i \in [d]} \binom{n_i}{s_i} - q(\mb{n}, \mc{S}, \mc{R}),
\end{equation}
where
\[
q(\mb{n}, \mc{S}, \mc{R}) = \sum_{\mb{m} \in \downcl \mc{S}} \sum_{\emptyset \ne \mc{Q} \subseteq \mc{R}(\mb{n})} (-1)^{|\mc{Q}| + 1} \prod_{\substack{i = 1 \\ m_i \ne 0}}^{d} \binom{m_i - 1 + n_i - \max_{\mb{r} \in \mc{Q}} r_i}{m_i}.
\]
Moreover, the bound \eqref{eq:main-theorem} is tight if at least one of the following holds:
\begin{itemize}
    \item There exists $\tilde{\mb{r}} \in \mc{R}$ such that for all $\mb{r} \in \mc{R}$ and $i \in [d]$ we have $r_i \ge \tilde{r}_i$.
    \item There exists $\tilde{\mb{s}} \in \mc{S}$ such that for all $\mb{s} \in \mc{S}$ and $i \in [d]$ we have $s_i \le \tilde{s}_i$.
\end{itemize}
\end{theorem}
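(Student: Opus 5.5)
We will prove the lower bound \eqref{eq:main-theorem} with the exterior-algebra method of~\cite{2023Bulavka}, and the tightness statements with an explicit construction.

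\textbf{Lower bound.} For each colour $i\in[d]$ we choose vectors $u_{v}\in\R^{N_i}$, $v\in[n_i]\times\fbr{i}$, in general position; the dimension $N_i$ is to be fixed later. To an edge $e$ of $\tprod{\mb{s}}{\mb{n}}$ with $|e\cap([n_i]\times\fbr{i})|=s_i$ we associate the tensor $x_e=\bigotimes_i \bigwedge_{v\in e_i} u_v$, which lives in $W_{\mb{s}}=\bigotimes_i\bigwedge^{s_i}\R^{N_i}$. We then apply a ``generic shifting'' projection to this vector: we wedge with fixed generic vectors, or equivalently contract against a generic flag, so as to obtain a linear map $\phi$ from $\bigoplus_{\mb{s}\in\mc{S}}\spann\{e\}$ into a space $U$.

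The map $\phi$ must satisfy two properties. (a) For every colored copy $\wt H$ of some $\tprod{\mc{S}}{\mb{r}}$ with $\mb{r}\in\mc{R}$, the images of the edges of $\wt H$ are linearly dependent, with every edge carrying a nonzero coefficient in the dependence. (b) The rank of $\phi$ on all edges equals $\sum_{\mb{s}}\prod\binom{n_i}{s_i}-q$. From these the standard argument gives the bound: each added edge lies in the span of the images of earlier edges, so $|E(F)|\ge\rk\phi$.

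For a single $\mb{r}$, property (a) comes from the following identity. In dimension $N$, any $r$ vectors satisfy a Plücker-type relation $\sum_{A\in\binom{[r]}{s}}\pm c_A\bigwedge_{A}u\otimes\bigwedge_{[r]\setminus A}u=0$ after projecting to a complementary space of dimension $r-s$. Concretely, we take $\phi$ to be wedging with $\bigwedge^{r_i-s_i}$ of fixed generic vectors in colour $i$. This kills $x_e$ along any $r_i$-clique in one colour, and the resulting relation tensorizes over the colours and sums over $\mb{s}\in\mc{S}$.

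For a family $\mc{R}$, a single $\phi$ has to work for every $\mb{r}$ at once. The plan is to take the target space to be the sum of the images under the maps $\phi_{\mb{r}}$ and to compute the rank of the quotient by inclusion–exclusion. This is where the alternating sum over $\mc{Q}\subseteq\mc{R}(\mb{n})$ and the term $\max_{\mb{r}\in\mc{Q}}r_i$ come from: the intersection of the kernels indexed by $\mc{Q}$ behaves like the kernel for the componentwise maximum $\mb{r}$. Vectors $\mb{r}\notin\mc{R}(\mb{n})$ contribute nothing, since no copy of them fits inside $\tprod{\mc{S}}{\mb{n}}$.

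\textbf{Rank computation via shifting.} We compute the dimension of the image through a shifted basis. With respect to a generic flag, the surviving basis monomials are those $T=(T_i)$ with $T_i\subseteq[n_i]$ that avoid the conditions forced by the wedge factors. For each $\mb{s}$, the set of killed monomials decomposes according to $\mb{m}\le\mb{s}$, where $m_i$ records how many elements of $T_i$ are ``new''. This produces the factor $\binom{m_i-1+n_i-r_i}{m_i}$, which counts multisets or shifted sets with the top constraint. The union over $\mb{s}\in\mc{S}$ then becomes a sum over $\downcl\mc{S}$ without double counting.

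The main obstacle is making the combination over $\mc{S}$ and over $\mc{R}$ simultaneously exact. We must show that the dependencies in (a) for a non-uniform union $\tprod{\mc{S}}{\mb{r}}$ are genuine dependencies in the direct sum, which requires gluing the different $\mb{s}$-components. We must also show that the rank formula is an equality and not merely an inequality. We would first attempt this by an exact-sequence and Koszul-complex computation in each colour, then take tensor products.

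\textbf{Upper bound under either hypothesis.} We exhibit an explicit $F$. Take $F$ to consist of all edges that are \emph{not} of the form ``$T_i\subseteq[n_i]\setminus[r_i-s_i]$ for all $i$'' for some $\mb{r}\in\mc{R}$; in the $\mc{S}$ case, the analogous family is indexed by $\downcl\mc{S}$. We then add the remaining edges in colex or reverse-lexicographic order. Each missing edge $e$ is completed by the copy spanned by $e$ together with the first $r_i-s_i$ vertices of each colour. When there is a minimal $\tilde{\mb{r}}$, the counting collapses: inclusion–exclusion reduces to the $\tilde{\mb{r}}$ term, or the union structure is a down-set. When there is a maximal $\tilde{\mb{s}}$, every edge of every layer can be handled using the $\tilde{\mb{s}}$-layer. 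In both cases we verify that $|E(F)|$ matches the right-hand side of \eqref{eq:main-theorem}.
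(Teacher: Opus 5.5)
\textbf{The lower-bound map you commit to is the wrong one.} Your $\phi$ is ``wedging with a fixed generic $(r_i-s_i)$-form $w$ in colour $i$''. But wedging with $w$ is the adjoint of what the argument needs. Since $\sp{x,\, w\llcorner e_R}=\sp{x\wedge w,\, e_R}$, the kernel of $x\mapsto x\wedge w$ in degree $s$ is exactly $U^{\perp}$, where $U=\spann\{w\llcorner e_R : |R|=r\}$ is the space of dependencies you want. So your $\phi$ quotients by $U^\perp$ instead of by $U$, and it represents the dual of the required matroid. In the one-colour case its rank is $\binom{n-r+s}{s}=q$ rather than $|E|-q$. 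Moreover, for $n\ge 2r-s$ the images of the $\binom{r}{s}$ edges of a clique on $R$ are linearly independent (look at the coefficient of $e_B$ for $|B\cap R|=s$), so property (a) fails outright. The paper works with the dependency space directly: $U=\spann\{g_{\mb{r}}\llcorner e_R\}$, where $g_{\mb{r}}=\sum_{\mb{s}\in\mc{S}}(-1)^{q_1(\mb{r},\mb{s})}\bigwedge_{i} f_{([r_i]\setminus[s_i])\times\{i\}}$ is a single mixed-degree element written in a colorful generic basis $\{f_v\}$. The gluing over $\mc{S}$ that you flag is then automatic, because different layers have disjoint supports and $g_{\mb{r}}\llcorner e_{V(H)}$ has support exactly $E(H)$. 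Also, only $\dim U\le q$ is needed, never equality.

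\textbf{The family $\mc{R}$ needs an idea you do not supply.} Inclusion--exclusion for $\dim\sum_{\mb{r}}U_{\mb{r}}$ fails in general for three or more subspaces. Your claim that ``the intersection behaves like the kernel for the componentwise maximum'' is exactly the unproved point. The paper gets it from Proposition~\ref{prop:llcorner-sign-decompose}. Expand $e_R$ in the $f$-basis and group by $\mb{m}\in\downcl\mc{S}$. Then $g_{\mb{r}}\llcorner\bigwedge_i f_{(T_i\cup([r_i]\setminus[m_i]))\times\{i\}}$ equals $\pm\sum_{\mb{s}\ge\mb{m}}(-1)^{q_3(\mb{s},T)}\bigwedge_i f_{(T_i\cup([s_i]\setminus[m_i]))\times\{i\}}$, which does not depend on $\mb{r}$; this is what the signs $q_1$ are for. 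Hence $U$ is spanned by one vector per pair $(\mb{m},T)$ with $T$ admissible for some $\mb{r}$. Inclusion--exclusion is then used only to count a union of index sets, where it is valid.

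\textbf{Your upper-bound construction also fails.} In the $\tilde{\mb{r}}$ case you remove, from every layer $\mb{s}$, the edges with $T_i\subseteq[n_i]\setminus[\tilde r_i-s_i]$. That removes $\sum_{\mb{s}\in\mc{S}}\prod_i\binom{n_i-\tilde r_i+s_i}{s_i}$ edges. By the hockey-stick identity, each product equals the sum of the $\mb{m}$-terms of $q$ over $\mb{m}\le\mb{s}$. Every down-set contains $\mb{0}$, so you remove more than $q$ edges as soon as $|\mc{S}|\ge 2$. Concretely, for $\mc{S}=\{(1,0),(0,1)\}$ and $\mc{R}=\{\mb{r}\}$ you keep only $r_1+r_2-2$ singletons. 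This is below the proven lower bound $r_1+r_2-1$, and indeed no edge can ever be added.

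The paper instead picks one $\mb{s}_{\mb{m}}\ge\mb{m}$ for each $\mb{m}\in\downcl\mc{S}$. It removes only the edges $\bigcup_i(T_i\cup[s_{\mb{m},i}-m_i])\times\{i\}$ with $T_i\in\binom{[n_i]\setminus[\tilde r_i-m_i+1]}{m_i}$. It restores them in lexicographic order, each via the copy on $\bigcup_i([\tilde r_i-m_i]\cup T_i)\times\{i\}$. The $\mc{R}$-indexed family $W_i\in\binom{[n_i]\setminus[r_i-\tilde s_i]}{\tilde s_i}$, taken only in the top layer $\tilde{\mb{s}}$, is the construction for the other case. So your pairing of families with cases appears reversed.
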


\section{Preliminaries}\label{sec:preliminaries}

Sections \ref{subsec:kalai-method} and \ref{subsec:lower-bound-via-ext} describe the linear algebraic method that we use in Section~\ref{sec:lower-bound-main-thrm} to obtain a lower bound on $ \wdir(\tprod{\mc{S}}{\mb{n}}, \tprod{\mc{S}}{\mc{R}})$.

Sections \ref{subsec:exterior-algebra}, \ref{subsec:generic-basis}, \ref{subsec:colorful-generic-basis}, and \ref{subsec:left-interior} provide background on the exterior algebra and the operations therein, which are necessary for the application of the linear algebraic technique from \cite{2023Bulavka}. The proofs largely follow \cite{2023Bulavka}, and we also make use of \cite{2019Rosen}. We additionally give an alternative, more direct proof of the existence of generic matrices (Lemma~\ref{lemma:generic-matrix-exists}), first established in \cite{1984KalaiPattern}.

The presentation aims to be self-contained; thus, definitions are given in a form suitable for proofs, though equivalent to standard ones. We only consider finite-dimensional vector spaces.

\subsection{Linear Algebraic Method} \label{subsec:kalai-method}

The following linear algebraic method was introduced by Kalai \cite{1985Kalai} for uncolored weak saturation numbers in the more general context of matroids. Here we restrict ourselves to the vector-space version, which is sufficient for our purposes.

\begin{lemma}\label{lemma:lower-bound-kalai}
Let $d \ge 1$ be an integer, let $(G, c)$ be a $d$-colored hypergraph, let $V$ be a vector space, and let $\mc{H}$ be a non-empty family of $d$-colored hypergraphs. Let $f: E(G) \to V$ be a map such that for every $(H, t) \in \mc{H}$ and every colored copy $(\wt{H},\tilde{t})$ of $(H, t)$ in $G$, the image of every edge $e \in E(\wt{H})$ lies in the linear span of the images of the other edges in $E(\wt{H})$, i.e.,
\begin{equation}\label{eq:kalai-la}
    \forall e \in E(\wt{H}) \quad \rk\left(f\left(E(\wt{H}) \setminus \fbr{e}\right)\right) = \rk\left(f\left(E(\wt{H})\right)\right).
\end{equation}
Then
\[
\wdir((G, c), \mc{H}) \ge \rk\left(f(E(G))\right).
\]
\end{lemma}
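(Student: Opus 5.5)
We prove Lemma~\ref{lemma:lower-bound-kalai} by the standard rank argument: the span of the images of the present edges never grows during a bootstrap process. Fix a weakly $\mc{H}$-saturated colored spanning subhypergraph $(F,c)$ of $(G,c)$ with $|E(F)| = \wdir((G,c),\mc{H})$, together with an ordering $e_1,\ldots,e_k$ of $E(G)\setminus E(F)$ as in the definition. Set $F_0 = F$ and $F_j = F \cup \fbr{e_1,\ldots,e_j}$, and let $W_j = \spann f(E(F_j))$. We will show by induction on $j$ that $W_j = W_0$ for all $j$.

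For the inductive step, take $j \ge 1$. By assumption, $e_j$ belongs to a colored copy $(\wt{H}_j,\tilde t)$ of some $(H,t)\in\mc{H}$ in $(F_j,c)$. Since $F_j \seq G$ and the coloring is the restriction of $c$, this is also a colored copy of $(H,t)$ in $(G,c)$, so hypothesis~\eqref{eq:kalai-la} applies to it. We use the hypothesis with $e = e_j$. The equality of ranks $\rk f(E(\wt{H}_j)\setminus\fbr{e_j}) = \rk f(E(\wt{H}_j))$ means that $f(e_j)$ lies in $\spann f(E(\wt{H}_j)\setminus\fbr{e_j})$. Every edge of $\wt{H}_j$ other than $e_j$ is an edge of $F_j$ distinct from $e_j$, hence an edge of $F_{j-1}$. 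Therefore $f(e_j)\in W_{j-1}$, and so $W_j = W_{j-1}$.

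At the end of the process $F_k = G$, so
\[
\rk f(E(G)) = \dim W_k = \dim W_0 \le |E(F)| = \wdir((G,c),\mc{H}),
\]
where the inequality holds because $W_0$ is spanned by at most $|E(F)|$ vectors. This is the claimed bound.

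The only point needing care is the step where a copy found in $F_j$ is treated as a colored copy in $G$, so that the hypothesis on $f$ can be invoked. A related subtlety is that we must apply~\eqref{eq:kalai-la} at exactly the edge $e_j$, the one that was just added; this is why the hypothesis is required for every edge $e \in E(\wt{H})$. Beyond this, the argument is routine.
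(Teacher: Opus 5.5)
Your proof is correct and follows the paper's argument essentially verbatim. Each added edge $e_j$ lies in the span of the other edges of its copy, all of which are in $F_{j-1}$, so the rank of $f(E(F_j))$ never changes; this gives $\rk f(E(G)) = \rk f(E(F)) \le |E(F)|$. Your explicit remark that a colored copy in $(F_j,c)$ is also a colored copy in $(G,c)$, so the hypothesis on $f$ applies, is a point the paper leaves implicit.
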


\begin{proof}
Let colored spanning subhypergraph $(F,c)$ of $G$ be a weakly $\mc{H}$-saturated in $(G,c)$ with $|E(F)| = \wdir((G, c), \mc{H})$. Assume that the edges are added in order $e_1, \ldots, e_s$, and each $e_i$ belongs to a colored copy $(H_i,t_i)$ of some $(H, t) \in \mc{H}$ in $(F_i, c|_{F_i})$, where $F_i = F \cup \fbr{e_1, \ldots, e_i}$ and $F_0 = F$. Then for each $i$, due to condition \eqref{eq:kalai-la}, the edge $e_i$ is linearly dependent on the images of the other edges in $E(H_i) \setminus \fbr{e_i} \subseteq E(F_{i-1})$, so
\[
\rk(f(E(F_{i-1}))) = \rk(f(E(F_i))).
\]
Hence,
\[
\rk(f(E(F_0))) = \rk(f(E(F_s))) = \rk(f(E(G))),
\]
and therefore
\[
\wdir((G, c), \mc{H}) = |E(F_0)| \ge |f(E(F_0))| \ge \rk(f(E(F_0))) = \rk(f(E(G))).
\]
\end{proof}

\subsection{Exterior algebra} \label{subsec:exterior-algebra}

\begin{definition}\label{def:exterior-algebra}
Let $N$ be a finite set of size $n$ equipped with a linear order. Let $V$ be a real vector space of dimension $n$ with a fixed orthonormal basis $\fbr{e_i}_{i \in N}$. Let $W$ be a real vector space of dimension $2^n$ with a fixed orthonormal basis $\fbr{e'_i}_{i \in [2^n]}$.

Let $f: 2^N \to [2^n]$ be a bijection that enumerates $2^N$ in lexicographic order. For each $S \subseteq N$, define $e_S := e'_{f(S)}$. For each $i \in N$, we identify $e_i$ with $e_{\fbr{i}}$, and identify $V$ with the subspace $\mathrm{span}(\fbr{e_{\fbr{i}}}_{i \in N}) \subseteq W$.

Define a bilinear operation $\wedge : W \times W \to W$ acting on basis vectors as follows:
\[
\forall S, T \subseteq N \quad e_S \wedge e_T =
\begin{cases}
\sgn(S, T)\, e_{S \cup T}, &\text{if } S \cap T = \emptyset;\\
0, & \text{otherwise},
\end{cases}
\]
where $\sgn(S, T) := (-1)^{\inv(S, T)}$ and $\inv(S, T) = |\fbr{(s, t) \in S \times T \mid s > t}|$.

We refer to $W$ as the \textit{exterior algebra} over $V$ and denote it by $\bigwedge V$. The operation $\wedge$ is called the \emph{exterior product}.
\end{definition}

\begin{remark*}
The notation $\bigwedge V$ does not reflect its dependence on the choice of basis in $V$, and this independence is justified in Proposition~\ref{prop:wedge-indep-basis}.
\end{remark*}

\begin{remark*}
    In the remaining propositions of this subsection, we assume that $n$, $N$, $V$, and $\{e_S\}_{S\subseteq[n]}$ are as in Definition~\ref{def:exterior-algebra}.
\end{remark*}

The exterior product satisfies the following properties.

\begin{proposition}\label{prop:wedge-alt}
Let $v$ and $w$ be vectors in $V$. Then $v \wedge w = -w \wedge v$.
\end{proposition}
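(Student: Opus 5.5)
Since $V$ is identified with $\spann(\fbr{e_{\fbr{i}}}_{i\in N})$, I will write $v=\sum_{i\in N} a_i e_{\fbr{i}}$ and $w=\sum_{j\in N} b_j e_{\fbr{j}}$ and expand both products using bilinearity of $\wedge$. This gives
\[
v\wedge w=\sum_{i,j\in N} a_i b_j\, e_{\fbr{i}}\wedge e_{\fbr{j}},\qquad
w\wedge v=\sum_{i,j\in N} a_i b_j\, e_{\fbr{j}}\wedge e_{\fbr{i}},
\]
so it suffices to show that $e_{\fbr{i}}\wedge e_{\fbr{j}}=-e_{\fbr{j}}\wedge e_{\fbr{i}}$ for all $i,j\in N$.

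For this basis identity I split into two cases using Definition~\ref{def:exterior-algebra}.

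\emph{Case $i=j$.} Here $\fbr{i}\cap\fbr{i}\ne\emptyset$, so $e_{\fbr{i}}\wedge e_{\fbr{i}}=0$. Both sides of the identity vanish.

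\emph{Case $i\ne j$.} Both products equal $\pm e_{\fbr{i,j}}$. The signs are $(-1)^{\inv(\fbr{i},\fbr{j})}$ and $(-1)^{\inv(\fbr{j},\fbr{i})}$. Exactly one of $i>j$ and $j>i$ holds, so exactly one of the two inversion counts is $1$ and the other is $0$. Hence the signs are opposite.

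Substituting the basis identity termwise into the expansion of $w\wedge v$ yields $w\wedge v=-v\wedge w$. I expect no real obstacle here. The only point needing care is that the inversion count is taken with respect to the fixed linear order on $N$, and the case split above handles this directly.
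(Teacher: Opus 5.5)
Your proof is correct and follows the same route as the paper: you reduce by bilinearity to basis vectors $e_{\fbr{a}}, e_{\fbr{b}}$. Both sides vanish when $a=b$, and otherwise $\sgn(\fbr{a},\fbr{b}) = -\sgn(\fbr{b},\fbr{a})$ because exactly one of the two inversion counts equals $1$.
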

\begin{proof}
By bilinearity, it suffices to verify the statement for $v = e_{\fbr{a}}$, $w = e_{\fbr{b}}$. If $a = b$, both sides are zero. Otherwise,
\[
e_{\fbr{a}} \wedge e_{\fbr{b}} = \sgn(\fbr{a}, \fbr{b})\, e_{\fbr{a, b}} = -\sgn(\fbr{b}, \fbr{a})\, e_{\fbr{a, b}} = -e_{\fbr{b}} \wedge e_{\fbr{a}}.
\]
\end{proof}

\begin{proposition}\label{prop:wedge-associative}
    The exterior product on $\bigwedge V$ is associative.
\end{proposition}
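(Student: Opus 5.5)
By bilinearity of $\wedge$ it is enough to check associativity on basis vectors. So we will show that for all $S, T, U \subseteq N$
\[
(e_S \wedge e_T) \wedge e_U = e_S \wedge (e_T \wedge e_U).
\]
For general elements we then expand both triple products in the basis $\fbr{e_S}_{S\subseteq N}$. The map $(x,y,z)\mapsto (x\wedge y)\wedge z - x\wedge(y\wedge z)$ is trilinear, so it vanishes identically once it vanishes on triples of basis vectors.

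\textbf{Degenerate case.} Suppose $S, T, U$ are not pairwise disjoint. We claim both sides are $0$.

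On the left, if $S\cap T \ne \emptyset$, the inner product $e_S \wedge e_T$ is already $0$. Otherwise $e_S\wedge e_T = \pm e_{S\cup T}$, and $(S\cup T)\cap U \ne \emptyset$ because some pair among $S,T,U$ meets and $S\cap T=\emptyset$. So the outer product is $0$.

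The right-hand side is handled symmetrically: either $T\cap U\neq\emptyset$, or $e_T\wedge e_U=\pm e_{T\cup U}$ and $S\cap(T\cup U)\ne\emptyset$.

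\textbf{Disjoint case.} Suppose $S,T,U$ are pairwise disjoint. Both sides are then a sign times $e_{S\cup T\cup U}$. The left sign is $\sgn(S,T)\sgn(S\cup T,U)$ and the right sign is $\sgn(T,U)\sgn(S,T\cup U)$.

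The key observation is that $\inv$ is additive in each argument over disjoint unions. Since $(S\cup T)\times U$ is the disjoint union of $S\times U$ and $T\times U$,
\[
\inv(S\cup T,U)=\inv(S,U)+\inv(T,U),
\]
and similarly
\[
\inv(S,T\cup U)=\inv(S,T)+\inv(S,U).
\]
Therefore both exponents equal $\inv(S,T)+\inv(S,U)+\inv(T,U)$, so the two signs coincide.

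No step here is a real obstacle. The only care needed is in the degenerate case, to confirm that a nonempty pairwise intersection forces a zero on each side, regardless of which pair intersects.
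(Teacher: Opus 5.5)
Your proof is correct and follows the same route as the paper: you reduce to basis vectors by linearity, observe that both sides vanish unless $S,T,U$ are pairwise disjoint, and compare signs via $\inv(S,T)+\inv(S\cup T,U)=\inv(S,T\cup U)+\inv(T,U)$. You also spell out the degenerate case and justify this identity by additivity of $\inv$ over disjoint unions, both of which the paper leaves implicit.
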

\begin{proof}
By bilinearity, it suffices to check that for all $A, B, C \subseteq N$,
\[
(e_A \wedge e_B) \wedge e_C = e_A \wedge (e_B \wedge e_C).
\]
Assume that $A, B, C$ are pairwise disjoint, as the product is zero otherwise. Then,
\begin{align*}
(e_A \wedge e_B) \wedge e_C &= \sgn(A, B)\, \sgn(A \cup B, C)\, e_{A \cup B \cup C},\\
e_A \wedge (e_B \wedge e_C) &= \sgn(B, C)\, \sgn(A, B \cup C)\, e_{A \cup B \cup C}.
\end{align*}
Both expressions are equal since $\inv(A, B) + \inv(A \cup B, C) = \inv(A, B \cup C) + \inv(B, C)$.
\end{proof}

The following proposition, analogous to one from \cite{2019Rosen}, relates the exterior and scalar products.

\begin{proposition}\label{prop:wedge-to-sp}
    Let $k\ge 0$ and $l\ge 0$ be integers, and let $v_1, \ldots, v_k$ and $w_1, \ldots, w_l$ be vectors from $V$. If $l = k$, then
\[
\sp{v_1 \wedge \cdots \wedge v_k,\ w_1 \wedge \cdots \wedge w_k}
= \begin{vmatrix}
\sp{v_1, w_1} & \cdots & \sp{v_1, w_k} \\
\vdots & \ddots & \vdots \\
\sp{v_k, w_1} & \cdots & \sp{v_k, w_k}
\end{vmatrix}.
\]
If $l \ne k$, then
\[
\sp{v_1 \wedge \cdots \wedge v_k,\ w_1 \wedge \cdots \wedge w_l} = 0.
\]
\end{proposition}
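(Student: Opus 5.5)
The plan is to reduce everything to basis vectors by multilinearity and then compare the two sides directly. Both sides of the identity are multilinear in $(v_1,\ldots,v_k)$ and in $(w_1,\ldots,w_l)$. For the left side this follows from bilinearity of $\wedge$ and of the inner product. For the right side it is multilinearity of the determinant in rows and columns. So it suffices to check the claim when every $v_a = e_{\fbr{i_a}}$ and every $w_b = e_{\fbr{j_b}}$ with $i_a, j_b \in N$.

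For such basis vectors, I first compute $e_{\fbr{i_1}}\wedge\cdots\wedge e_{\fbr{i_k}}$. Iterating Definition~\ref{def:exterior-algebra} and using Proposition~\ref{prop:wedge-associative}, this product is $0$ if some index repeats. Otherwise it equals $\sgn(\pi)\, e_{I}$, where $I=\fbr{i_1,\ldots,i_k}$ and $\pi$ is the permutation sorting $(i_1,\ldots,i_k)$ into increasing order. I would prove this by induction on $k$. The inductive step multiplies $\pm e_{\fbr{i_1,\ldots,i_{k-1}}}$ by $e_{\fbr{i_k}}$, which contributes the factor $(-1)^{\inv(\fbr{i_1,\ldots,i_{k-1}},\fbr{i_k})}$. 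This is exactly the sign change in the inversion count when $i_k$ is appended, so the sign of the sorting permutation updates correctly.

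Now the case $l\ne k$ is immediate. The two products are multiples of $e_I$ and $e_J$ with $|I|=k\ne l=|J|$, or they are zero. Distinct basis vectors $e_S$ are orthonormal, so the inner product vanishes.

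For $l=k$, suppose first that the $i_a$ are not distinct, say $i_a=i_{a'}$ with $a\ne a'$. Then the left side is $0$, and rows $a$ and $a'$ of the matrix $(\sp{e_{i_a},e_{j_b}})$ coincide, so the determinant is $0$ as well. The same argument handles repeated $j_b$ using columns. Now suppose both index tuples are distinct. If $I\ne J$, the left side is $0$ by orthonormality. On the right, some $i_a\notin J$, so row $a$ is zero and the determinant is $0$. If $I=J$, the left side is $\sgn(\pi)\sgn(\rho)$, where $\pi$ and $\rho$ sort the two tuples. On the right, the matrix $(\delta_{i_a j_b})$ is a permutation matrix: it corresponds to the permutation $b=\rho^{-1}\pi(a)$, up to the convention for composing permutations. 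Its determinant is therefore $\sgn(\rho^{-1}\pi)=\sgn(\pi)\sgn(\rho)$, which matches.

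The only delicate step is the sign bookkeeping: checking that the iterated inversion counts give the sign of the sorting permutation, and that this matches the sign of the permutation matrix. Everything else is routine multilinear reduction.
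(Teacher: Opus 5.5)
Your proof is correct and takes essentially the same route as the paper: reduce by multilinearity to basis vectors, then check that both sides agree. The only difference is that the paper invokes anticommutativity (Proposition~\ref{prop:wedge-alt}) together with the alternating property of the determinant to reduce at once to strictly increasing, distinct index tuples, where both sides equal $\sp{e_S,e_T}$. You instead do the sign bookkeeping explicitly for arbitrary tuples, via the sorting permutation and the determinant of the permutation matrix $\sgn(\rho^{-1}\pi)$; this is more detailed but equivalent.
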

\begin{proof}
For $l \ne k$, by multilinearity, it suffices to verify the equality when the $v_i$ and $w_j$ are basis vectors, which is straightforward.

For $l = k$, by multilinearity and Proposition~\ref{prop:wedge-alt}, it suffices to check the identity for $S, T \in \binom{N}{k}$, where $S = \fbr{s_1 < \ldots < s_k}\subseteq N$ and $T = \fbr{t_1 < \ldots < t_k}\subseteq N$. In this case, $\sp{e_{s_1} \wedge \cdots \wedge e_{s_k},\ e_{t_1} \wedge \cdots \wedge e_{t_k}} = \sp{e_S, e_T}$, which equals $1$ if $S = T$ and $0$ otherwise. The determinant on the right-hand side reflects exactly this.
\end{proof}

As a consequence, we have the following proposition.

\begin{proposition}\label{prop:wedge-indep-basis}
Let $\fbr{f_v}_{v \in N}$ be an orthonormal basis for $V$ (not necessarily equal to $\fbr{e_v}_{v \in N}$). For $S = \fbr{s_1 < \ldots < s_k} \subseteq N$, define $f_S := \bigwedge_{i\in[k]} f_{s_i}$. Then the collection $\fbr{f_S}_{S \subseteq N}$ is an orthonormal basis for $\bigwedge V$. Moreover, for all $S, T \subseteq N$,
\[
f_S \wedge f_T =
\begin{cases}
\sgn(S, T)\, f_{S \cup T}, &\text{if } S \cap T = \emptyset; \\
0, & \text{otherwise}.
\end{cases}
\]
\end{proposition}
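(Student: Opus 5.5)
The plan is to use Proposition~\ref{prop:wedge-to-sp} for the inner products and the associativity and alternation rules (Propositions~\ref{prop:wedge-associative} and~\ref{prop:wedge-alt}) for the multiplication rule, so that the whole statement reduces to finite determinant and sign manipulations.

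\textbf{Orthonormality.} Take $S=\fbr{s_1<\ldots<s_k}$ and $T=\fbr{t_1<\ldots<t_l}$ in $N$. If $k\ne l$, Proposition~\ref{prop:wedge-to-sp} gives $\sp{f_S,f_T}=0$. If $k=l$, the same proposition gives
\[
\sp{f_S,f_T}=\det\br{\sp{f_{s_a},f_{t_b}}}_{a,b\in[k]}.
\]
Since $\fbr{f_v}$ is orthonormal, each entry equals $\sp{f_{s_a},f_{t_b}}=[s_a=t_b]$. If $S\ne T$, some $s_a$ lies outside $T$, so the corresponding row is zero and the determinant vanishes. If $S=T$, the matrix is the identity, because both index lists are sorted, so the determinant is $1$. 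Hence $\fbr{f_S}_{S\seq N}$ consists of $2^n$ orthonormal vectors in a space of dimension $2^n$, and is therefore an orthonormal basis of $\bigwedge V$.

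\textbf{Product rule.} By associativity,
\[
f_S\wedge f_T=f_{s_1}\wedge\cdots\wedge f_{s_k}\wedge f_{t_1}\wedge\cdots\wedge f_{t_l}.
\]
First I record that $v\wedge v=0$ for every $v\in V$; this is Proposition~\ref{prop:wedge-alt} with $w=v$. Together with associativity and bilinearity, this shows that a wedge of vectors of $V$ changes sign when two adjacent factors are swapped and vanishes when two factors coincide.

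If $S\cap T\ne\emptyset$, some $f_v$ appears twice in the product. I move the two copies next to each other by adjacent swaps, which only changes the sign, and the product then vanishes because $f_v\wedge f_v=0$.

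If $S\cap T=\emptyset$, I sort the concatenated list $s_1,\ldots,s_k,t_1,\ldots,t_l$ into increasing order by adjacent transpositions. The parity of the number of transpositions needed equals the number of pairs $(s,t)\in S\times T$ with $s>t$, which is $\inv(S,T)$. Each transposition contributes a factor $-1$, so
\[
f_S\wedge f_T=(-1)^{\inv(S,T)}\,f_{S\cup T}=\sgn(S,T)\,f_{S\cup T}.
\]

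The only point needing care is that Proposition~\ref{prop:wedge-alt} is stated for $v,w\in V$ rather than for arbitrary elements of $\bigwedge V$. The sign-swapping argument above only ever swaps two adjacent factors that are both basis vectors $f_v\in V$. Associativity lets me regroup the product so that each swap is performed inside a factor of the form $(f_a\wedge f_b)$, so Proposition~\ref{prop:wedge-alt} applies directly at every step.
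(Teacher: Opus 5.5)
Your proof is correct. The orthonormality half is the same as the paper's: both reduce $\sp{f_S,f_T}$ to a determinant via Proposition~\ref{prop:wedge-to-sp}.

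For the multiplication rule you take a genuinely different route. The paper stays entirely inside the inner-product framework. It applies Proposition~\ref{prop:wedge-to-sp} again to compute $\sp{f_R, f_S\wedge f_T}$ for every $R$, viewing $f_S\wedge f_T$ as a single wedge of the list $s_1,\ldots,s_k,t_1,\ldots,t_l$. This inner product vanishes unless $S\cap T=\emptyset$ and $R=S\cup T$, in which case the matrix is a permutation matrix with determinant $\sgn(S,T)$. The paper then reads off $f_S\wedge f_T$ from its coordinates in the orthonormal basis $\fbr{f_R}$ established in the first half. You instead reorder the factors directly, using associativity and the anticommutation rule of Proposition~\ref{prop:wedge-alt}.

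Both routes rest on the same combinatorial fact: the parity of the permutation sorting the concatenated list is $\inv(S,T)$. They buy different things, though. In the paper, the product rule is a consequence of the basis-independence of the inner product, so it depends on the first half of the statement. Your argument decouples the two halves and never uses orthonormality of $\fbr{f_v}$ in the product rule. It therefore shows that the same multiplication table holds for ordered wedges of an arbitrary basis of $V$.

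Your derivation of $v\wedge v=0$ from $v\wedge v=-v\wedge v$ uses that the field is $\R$, which is fine here. Your explicit remark that each swap only ever involves two degree-one factors is exactly the care Proposition~\ref{prop:wedge-alt} requires.
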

\begin{proof}
By Proposition~\ref{prop:wedge-to-sp},
\[
\forall S, T \subseteq N \quad \sp{f_S, f_T} =
\begin{cases*}
1, & if $S = T$;\\
0, & if $S \ne T$.
\end{cases*}
\]
Hence, $\fbr{f_S}_{S \subseteq N}$ is an orthonormal basis.

Also, again by Proposition~\ref{prop:wedge-to-sp},
\[
\forall R, S, T \subseteq N \quad \sp{f_R, f_S \wedge f_T} =
\begin{cases}
    \sgn(S,\ T), &\text{if }S \cap T = \emptyset \text{ and } R=S\cup T;\\
    0, &\text{otherwise}.
    \end{cases}
\]
\end{proof}

Proposition~\ref{prop:wedge-indep-basis} shows that the exterior and scalar products on $\bigwedge V$ are independent of the choice of orthonormal basis of $V$. Therefore, we may refer to the exterior algebra over $V$ without specifying a basis.

\subsection{Generic Basis} \label{subsec:generic-basis}

The proof of Theorem~\ref{thrm:main-theorem} relies on the notion of a generic pair of bases.

\begin{definition}
Let $N$ be a finite set of size $n$ equipped with a linear order. Let $V$ be a real vector space of dimension $n$. A pair of orthonormal bases $(\fbr{e_v}_{v \in N}, \fbr{f_v}_{v \in N})$ for $V$ is called \emph{generic} if for all subsets $S \subseteq N$ and $T \subseteq N$ of the same size we have $\sp{f_S, e_T} \ne 0$.
\end{definition}

\begin{remark*}
This definition is symmetric: if the pair $(\fbr{e_v}_{v\in N}, \fbr{f_v}_{v\in N})$ is generic, then so is the pair $(\fbr{f_v}_{v\in N}, \fbr{e_v}_{v\in N})$. Therefore, the order is not matter in what follows.
\end{remark*}

\begin{theorem}\label{thrm:generic-basis-exists}
Let $V$ be a vector space with an orthonormal basis $\fbr{e_v}_{v \in N}$. Then there exists an orthonormal basis $\fbr{f_v}_{v \in N}$ such that the pair $(\fbr{e_v}_{v\in N}, \fbr{f_v}_{v\in N})$ is generic.
\end{theorem}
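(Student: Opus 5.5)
We need to find an orthonormal basis $\fbr{f_v}_{v\in N}$ such that every minor $\sp{f_S, e_T}$, $|S|=|T|$, is nonzero. By Proposition~\ref{prop:wedge-to-sp}, $\sp{f_S,e_T}$ is the determinant of the $|S|\times|T|$ submatrix, with rows $S$ and columns $T$, of the orthogonal matrix $U=(\sp{f_u,e_v})_{u,v\in N}$. So the task reduces to a matrix statement: there exists a real orthogonal $n\times n$ matrix all of whose square minors, including the empty one, are nonzero. The plan is to parametrize orthogonal matrices by a polynomial or rational map, and then show that each minor is a nonzero polynomial in the parameters. A finite union of proper algebraic subsets cannot cover an open set of parameters, so some parameter choice satisfies all conditions simultaneously.

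For the parametrization I would use the Cayley transform $U(A)=(I-A)(I+A)^{-1}$ with $A$ real skew-symmetric. Here $I+A$ is always invertible, since a skew-symmetric matrix has only imaginary eigenvalues, and $U(A)$ is orthogonal. Each minor of $U(A)$ equals $P_{S,T}(A)/\det(I+A)^k$, where $P_{S,T}$ is a polynomial in the $\binom n2$ free entries of $A$. We have only finitely many pairs $(S,T)$. So it suffices to show that no $P_{S,T}$ is identically zero, and then to pick $A$ off the union of their zero sets; that union has measure zero in $\R^{\binom n2}$.

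The main obstacle is proving that each $P_{S,T}$ is not identically zero, i.e.\ exhibiting, for each fixed $(S,T)$, one orthogonal matrix in the image of the Cayley map whose $(S,T)$-minor is nonzero. I would first try a scaling argument. Replace $A$ by $tA$ and expand $U(tA)=I-2tA+O(t^2)$. Then the $(S,T)$-minor is a power series in $t$, and its lowest-order term is governed by the $(S\setminus T)\times(T\setminus S)$ minor of $-2A$, since the shared indices $S\cap T$ contribute identity entries at order zero. Note that $|S\setminus T|=|T\setminus S|$ because $|S|=|T|$.

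- If the sets $S\setminus T$ and $T\setminus S$ are disjoint and of equal size $m$, we can choose a skew-symmetric $A$ that pairs them off, with $A_{s_i,t_i}=1$ and $A_{t_i,s_i}=-1$ and all other entries zero.
- With this choice, the leading coefficient, of order $t^m$, is $\pm 2^m\neq0$.

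Hence $P_{S,T}\not\equiv0$, which completes the argument. I would verify the leading-term claim carefully by Laplace expansion along the rows indexed by $S\cap T$.

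If that expansion proves messy, an alternative is to build the matrix from products of Givens rotations and use induction on $n$. Another alternative is a complex-analytic argument on $SO(n)$ as an irreducible real algebraic variety. Under either alternative, each minor that is not identically zero on $SO(n)$ vanishes only on a proper subvariety, and a finite union of proper subvarieties of an irreducible variety is proper.
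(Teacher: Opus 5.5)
Your proof is correct, but it takes a different route from the paper's.

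\textbf{The paper's route.} The paper never parametrizes $SO(n)$. It defines a division-free Gram--Schmidt procedure $F_R$ over an arbitrary commutative ring. This procedure commutes with ring homomorphisms and fixes every matrix with orthonormal rows. The paper applies it to the matrix of indeterminates $(X_{i,j})$. Each minor of $B=F(X)$ is then a polynomial, and it is nonzero for the following reason: specializing $X$ to the permutation matrix of a $\pi$ carrying $S$ order-preservingly onto $T$ returns that same permutation matrix, whose $(S,T)$-minor is $1$. Finally the paper specializes to algebraically independent reals and normalizes the rows.

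\textbf{Your route.} You use the Cayley transform, whose denominator $\det(I+A)$ never vanishes for real skew-symmetric $A$. Some of the permutation matrices the paper uses as witnesses (e.g.\ transpositions) have eigenvalue $-1$, so they lie outside the Cayley image. You therefore need a different witness, and you obtain it from the expansion $U(tA)=I-2tA+O(t^2)$ at the identity. That computation is sound. In the permutation expansion of the $(S,T)$-minor, every bijection must move all of $S\setminus T$. So only bijections that fix $S\cap T$ pointwise and map $S\setminus T$ onto $T\setminus S$ contribute at order $t^m$. With your pairing matrix exactly one such bijection survives, giving leading coefficient $\pm 2^m$. The case $S=T$ is $m=0$ with $A=0$, and the disjointness you hedge on is automatic.

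\textbf{What each buys.} The paper's argument is purely algebraic, and the witness comes for free from the functoriality of $F_R$. The cost is a final normalization step, which is legitimate since $\det C\neq 0$. Your argument lands directly in $SO(n)$ with no normalization. It also shows that generic orthogonal matrices exist arbitrarily close to $I$, in fact on a dense open subset of the Cayley image. The cost is the leading-term analysis. Your fallback alternatives (Givens rotations, irreducibility of $SO(n)$) are not needed.
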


To prove this theorem, we use the following simple consequence of Proposition~\ref{prop:wedge-to-sp}.

\begin{lemma}\label{lemma:eqviv-to-be-generic}
Let $\fbr{f_v}_{v \in N}$ be a basis for $V$ with transition matrix $A = \fbr{a_{v,w}}_{v,w \in N}$, i.e., $f_v = \sum_{w \in N} a_{v,w} \cdot e_w$. Then for all $S, T \subseteq N$ of equal size,
\[
\sp{f_S, e_T} = \det(A_{S|T}),
\]
where $A_{S|T}$ denotes the submatrix of $A$ with rows indexed by $S$ and columns indexed by $T$.
\end{lemma}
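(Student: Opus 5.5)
The plan is to reduce the lemma to Proposition~\ref{prop:wedge-to-sp}. Write $S = \fbr{s_1 < \ldots < s_k}$ and $T = \fbr{t_1 < \ldots < t_k}$. By definition, $f_S = f_{s_1} \wedge \cdots \wedge f_{s_k}$. The basis vector $e_T$ equals $e_{t_1} \wedge \cdots \wedge e_{t_k}$: iterating the defining rule $e_A \wedge e_B = \sgn(A,B)\, e_{A\cup B}$ with increasing singletons, every sign is $+1$ because $\inv$ vanishes when all elements of the first set are smaller than the element appended. Associativity (Proposition~\ref{prop:wedge-associative}) makes the iterated product unambiguous.

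Then Proposition~\ref{prop:wedge-to-sp} with $v_i = f_{s_i}$ and $w_j = e_{t_j}$ gives
\[
\sp{f_S, e_T} = \det\bigl(\sp{f_{s_i}, e_{t_j}}\bigr)_{i,j\in[k]}.
\]
Since $\fbr{e_w}$ is orthonormal and $f_v = \sum_w a_{v,w} e_w$, we have $\sp{f_{s_i}, e_{t_j}} = a_{s_i,t_j}$. This matrix is exactly $A_{S|T}$, with rows and columns in the increasing order inherited from $N$, so the determinant is $\det(A_{S|T})$. The case $k=0$ is trivial: both sides equal $1$, with the convention that the empty determinant is $1$ and $f_\emptyset = e_\emptyset$ is the unit.

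The only point needing care is the sign and ordering convention, namely that $e_T$ coincides with the ordered wedge $e_{t_1}\wedge\cdots\wedge e_{t_k}$ without an extra sign, and that $A_{S|T}$ is indexed in increasing order so that no row or column permutation sign arises. Both follow from the definitions, so I do not expect a real obstacle. The lemma does not need $\fbr{f_v}$ to be orthonormal, since Proposition~\ref{prop:wedge-to-sp} holds for arbitrary vectors.
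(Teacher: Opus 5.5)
Your proposal is correct and takes the same route as the paper, which states this lemma as a direct consequence of Proposition~\ref{prop:wedge-to-sp}: apply it with $v_i = f_{s_i}$ and $w_j = e_{t_j}$, then use $\sp{f_{s_i}, e_{t_j}} = a_{s_i,t_j}$. The paper leaves implicit the two details you check, namely that $e_T$ equals the increasing wedge $e_{t_1}\wedge\cdots\wedge e_{t_k}$ with no extra sign, and that the rows and columns of $A_{S|T}$ are taken in increasing order.
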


Thus, Theorem~\ref{thrm:generic-basis-exists} reduces to the following lemma.

\begin{lemma}\label{lemma:generic-matrix-exists}
There exists a real orthonormal $n \times n$ matrix in which all square minors are nonzero.
\end{lemma}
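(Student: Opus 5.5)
Take the orthogonal matrix $A=e^{tX}$ for a skew-symmetric $X$ and a small real parameter $t$, and argue that every square minor of $A$ is a nonzero real-analytic function of $t$. A nonzero analytic function on an interval has only isolated zeros, and there are finitely many minors. Hence some small $t\ne 0$ (indeed all but finitely many $t$ in a small interval) makes all of them nonzero simultaneously.

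The work is choosing $X$ so that no minor vanishes identically. Fix $S,T\subseteq[n]$ with $|S|=|T|=k$, and write $A_{S|T}$ for the submatrix. Expanding $e^{tX}=I+tX+\tfrac{t^2}{2}X^2+\dots$, the determinant $\det(A_{S|T})$ is a power series in $t$. I would identify its lowest-order coefficient. The natural candidate uses the Cayley transform instead, $A=(I-tX)(I+tX)^{-1}$, or more simply its first order part $I+2tX$. To first order, the minor $\det(I_{S|T}+2tX_{S|T}+O(t^2))$ has leading term $(2t)^{m}\det(X_{S\setminus T\,|\,T\setminus S})$ up to sign, where $m=|S\setminus T|=|T\setminus S|$. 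This comes from expanding the determinant: the identity contributes only on the diagonal entries indexed by $S\cap T$, and the remaining block must come from $X$. Terms of lower order in $t$ vanish, because any permutation term of degree less than $m$ would need an identity entry outside $S\cap T$.

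So it suffices to pick a skew-symmetric $X$ all of whose minors $X_{P|Q}$ with $P\cap Q=\emptyset$, $|P|=|Q|$, are nonzero. Such minors involve only off-diagonal entries. Taking the strictly upper entries $x_{ij}$ ($i<j$) as independent indeterminates and setting $x_{ji}=-x_{ij}$, each such minor is a polynomial in the variables. It is not identically zero, since its entries are distinct variables up to sign: for disjoint $P,Q$ each unordered pair $\{p,q\}$ appears at most once in the block. The monomial from any single permutation therefore cannot cancel with that of another permutation. Finitely many nonzero polynomials have a common non-root, so a real $X$ exists.

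The main obstacle I expect is the lowest-order term claim: verifying carefully that the $O(t^2)$ corrections cannot contribute at order $t^m$ or below. They can contribute at order $t^m$ only by replacing some $X$-entries with $t^2$-terms, which raises the degree beyond $m$. Still, I would do this cleanly by scaling. With $D=\mathrm{diag}$ scaling, each entry of $A-I$ is $O(t)$, and each of the $m$ off-diagonal positions used must carry at least one power of $t$. If $e^{tX}$ proves messy, I would switch to the Cayley transform, which is rational and clearly orthogonal, with the same first-order term $2tX$.
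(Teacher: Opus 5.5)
Your proof is correct, and it takes a genuinely different route from the paper.

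The paper argues purely algebraically:
\begin{itemize}
\item It runs a division-free Gram--Schmidt procedure $F_R$ on the matrix of indeterminates over $\Z[\{X_{i,j}\}]$. This procedure fixes orthonormal matrices and commutes with ring homomorphisms.
\item It shows each minor $\det(B_{S|T})$ is a nonzero polynomial by specializing to a permutation matrix carrying $S$ onto $T$, whose $S|T$ minor is $1$.
\item It then evaluates at algebraically independent reals and normalizes the rows.
\end{itemize}

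You instead perturb the identity along an orthogonal curve $A(t)=e^{tX}$ (or the Cayley transform) and extract the leading term of each minor. Every bijection $S\to T$ moves at least $m=|S\setminus T|$ indices, and each moved index contributes an $O(t)$ factor. At order exactly $t^m$, only the bijections fixing $S\cap T$ survive, and they use the linear part $Y$ of $A(t)$ (so $Y=X$ or $Y=\mp 2X$). Hence
\[
\det A(t)_{S|T}=t^m\bigl(\pm\det(Y_{S\setminus T\,|\,T\setminus S})+O(t)\bigr).
\]
Your reduction to a skew-symmetric $X$ whose minors on disjoint row/column sets are nonzero is sound. Such blocks contain each variable $x_{\{p,q\}}$ at most once, so distinct permutations give distinct monomials and nothing cancels. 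You also do not need the isolated-zeros argument: the factorization above already shows that every sufficiently small $t\neq 0$ works. The vague remark about diagonal scaling can simply be dropped.

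As for what each route buys: the paper's specialization trick certifies non-vanishing of all minors in one stroke, with no analysis and no order-of-vanishing bookkeeping. Your argument is more explicit and local. It lands directly in $SO(n)$ with no normalization step, exhibits such matrices arbitrarily close to $I$, and shows each minor vanishes to order exactly $|S\setminus T|$ at $t=0$. The cost is the leading-term computation plus a separate genericity choice of $X$.
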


This lemma was proven in \cite{1984KalaiPattern}, but we provide a more direct proof.

\begin{proof}
Let $R$ be any commutative ring. We define an orthogonalization procedure $F_R: \mathrm{Mat}_{n \times n}(R) \to \mathrm{Mat}_{n \times n}(R)$. For every matrix $A \in \mathrm{Mat}_{n \times n}(R)$, perform the following $\binom{n}{2}$ transformations: for $i$ from $1$ to $n$, and $j$ from $1$ to $i-1$, replace the $i$-th row $v_i$ with $\sp{v_j, v_j} v_i - \sp{v_i, v_j} v_j$, where $\sp{u, w} = \sum_{k=1}^n u_k w_k$. It is easy to verify that for every matrix $A$, the matrix $F_R(A)$ has orthogonal rows. Also, if $A$ is orthonormal, then $F_R(A) = A$.

For any ring homomorphism $f: R \to Q$ and the induced matrix homomorphism $f: \mathrm{Mat}_{m \times n}(R) \to \mathrm{Mat}_{m \times n}(Q)$, we have
\[
\forall v_1, v_2 \in \mathrm{Mat}_{1 \times n}(R) \quad
f\left(\sp{v_2, v_2} v_1 - \sp{v_1, v_2} v_2\right) = \sp{f(v_2), f(v_2)} f(v_1) - \sp{f(v_1), f(v_2)} f(v_2),
\]
and hence
\begin{equation}\label{eq:ring-exists-generic}
\forall A \in \mathrm{Mat}_{n \times n}(R) \quad f(F_R(A)) = F_Q(f(A)).
\end{equation}

Consider the matrix $A \in \mathrm{Mat}_{n \times n}(\Z[\fbr{X_{i,j}}_{i\in[n],j \in [n]}])$ where $A_{i,j} = X_{i,j}$. Let $B := F_{\Z[\fbr{X_{i,j}}]}(A)$. We will show that every square minor $\det(B_{S|T})$ is a nonzero polynomial with integer coefficients.

Let $S = \fbr{s_1 < \ldots < s_k}$, $T = \fbr{t_1 < \ldots < t_k}$ be subsets of $[n]$. Choose any permutation $\pi: [n] \to [n]$ such that $\pi(s_i) = t_i$ for all $i$. Define a ring homomorphism $f: \Z[\fbr{X_{i,j}}] \to \Z$ by
\[f(X_{i,j})=
\begin{cases}
1, & \text{if }\pi(i)=j;\\
0, & \text{otherwise}.
\end{cases}\]
Then $f(A)$ is a permutation matrix, which is orthonormal, so $F_{\Z}(f(A)) = f(A)$ and
\[
\det(F_{\Z}(f(A))_{S|T}) = \det(f(A)_{S|T}) = 1.
\]
By identity~\eqref{eq:ring-exists-generic}, $f(B) = F_{\Z}(f(A))$, so $\det(f(B)_{S|T}) \ne 0$ and thus $\det(B_{S|T})$ is a nonzero polynomial.

Now take $\fbr{a_{i,j}}_{i,j \in [n]}$ to be a collection of $n^2$ real numbers that are algebraically independent over $\Q$. Define a ring homomorphism $g: \Z[\fbr{X_{i,j}}] \to \R$ by $g(X_{i,j}) = a_{i,j}$. Let $C := g(B)$. Then $C$ has orthogonal rows, and by algebraic independence, all square minors of $C$ are nonzero. Since $\det(C) \ne 0$, we can normalize the rows to obtain an orthonormal matrix with all square minors nonzero.
\end{proof}

\subsection{Colorful Generic Basis} \label{subsec:colorful-generic-basis}

Since we want to work with hypergraphs whose vertices are colored, it is useful to introduce a coloring on the set $N$ as well.

\begin{definition}
Let $N$ be a linearly ordered set and $[d]$ a set of colors. A coloring $c: N \to [d]$ is said to be \emph{compatible} with the order on $N$ if for all $u \le v$, we have $c(u) \le c(v)$.

The set of elements of color $i$ is denoted by $N_i := c^{-1}(i)$.
\end{definition}

\begin{definition}
Let $N$ be a linearly ordered set, and let $c: N \to [d]$ be a coloring compatible with the order on $N$. A pair of orthonormal bases $(\fbr{e_v}_{v \in N}, \fbr{f_v}_{v \in N})$ for $V$ is called \emph{colorful} if for all $u, v \in N$ with $c(u) \ne c(v)$, we have $\sp{f_v, e_u} = 0$.
\end{definition}

\begin{remark*}
This definition is symmetric: if the pair $(\fbr{e_v}_{v\in N}, \fbr{f_v}_{v\in N})$ is colorful, then so is the pair $(\fbr{f_v}_{v\in N}, \fbr{e_v}_{v\in N})$. Therefore, the order is not matter in what follows.
\end{remark*}

The following lemma shows a key property of a colorful generic pair of orthonormal bases.

\begin{lemma}\label{lemma:e_in_terms_of_generic}
Let $V$ be a vector space, and let $c: N \to [d]$ be a coloring compatible with the order on $N$. Suppose we have a colorful pair of orthonormal bases $\fbr{e_v}_{v \in N}$ and $\fbr{f_v}_{v \in N}$. Then
\[
\forall R \subseteq N \quad e_R = \bigwedge_{i\in[d]}\ \sum_{S_i \in \binom{N_i}{|R_i|}} \sp{f_{S_i}, e_{R_i}} \cdot f_{S_i},
\]
where $R_i := R \cap N_i$.
\end{lemma}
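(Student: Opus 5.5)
The idea is to factor $e_R$ color by color, expand each factor in the basis $\fbr{f_v}$, and then use the colorful condition to see that only color-preserving terms survive.

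Since the coloring is compatible with the order, $R$ is listed in increasing order as the elements of $R_1$, then those of $R_2$, and so on. Hence, by Definition~\ref{def:exterior-algebra} and associativity (Proposition~\ref{prop:wedge-associative}),
\[
e_R = e_{R_1} \wedge e_{R_2} \wedge \cdots \wedge e_{R_d},
\]
because $\sgn(R_i, R_j) = 1$ for $i<j$. It therefore suffices to prove, for each color $i$ separately, that
\[
e_{R_i} = \sum_{S_i \in \binom{N_i}{|R_i|}} \sp{f_{S_i}, e_{R_i}}\, f_{S_i}.
\]
The conclusion then follows by taking the exterior product of these identities over $i \in [d]$.

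For the single-color identity, Proposition~\ref{prop:wedge-indep-basis} says that $\fbr{f_S}_{S \subseteq N}$ is an orthonormal basis of $\bigwedge V$. So
\[
e_{R_i} = \sum_{S \subseteq N} \sp{f_S, e_{R_i}}\, f_S .
\]
By Proposition~\ref{prop:wedge-to-sp}, the coefficient vanishes unless $|S| = |R_i|$, and in that case it equals the determinant of the matrix $\bigl(\sp{f_{s}, e_{r}}\bigr)_{s \in S,\, r \in R_i}$.

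The only point needing an argument is that this coefficient also vanishes when $S \not\subseteq N_i$. If $S$ contains some $s$ with $c(s) \ne i$, then by the colorful property $\sp{f_s, e_r} = 0$ for every $r \in R_i \subseteq N_i$. The row of $s$ in the matrix is then zero, so the determinant is zero. Hence only $S_i \in \binom{N_i}{|R_i|}$ contribute, which gives exactly the single-color identity. I do not expect any real obstacle here; the steps are bookkeeping with Propositions~\ref{prop:wedge-to-sp} and~\ref{prop:wedge-indep-basis}.

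The one subtlety is the sign in the first step. This is precisely where compatibility of the coloring with the order is needed: it guarantees that concatenating the color classes in the order $1,\ldots,d$ preserves the increasing order of $R$, so no sign appears.
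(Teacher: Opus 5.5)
Your proof is correct, but it runs in the opposite order from the paper's. The paper expands the whole of $e_R$ in the basis $\{f_S\}_{S\subseteq N}$ and analyses the single matrix $\bigl(\sp{f_s,e_r}\bigr)_{s\in S,\,r\in R}$. Colorfulness together with compatibility makes this matrix block-structured by color. Its determinant therefore vanishes as soon as $|S\cap N_i|<|R_i|$ for some $i$, which, since $|S|=|R|$, means unless $|S\cap N_i|=|R_i|$ for every $i$. Otherwise it factors as $\prod_i\sp{f_{S\cap N_i},e_{R_i}}$. To reach the stated wedge-of-sums form, one must still rewrite $f_S=f_{S_1}\wedge\cdots\wedge f_{S_d}$ (again using compatibility to kill the sign) and regroup by multilinearity, which the paper leaves implicit.

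You instead factor first, $e_R=e_{R_1}\wedge\cdots\wedge e_{R_d}$, with compatibility used exactly once for the sign, and then expand each color class separately. The only vanishing argument you need is a zero row. There is no rectangular-block determinant to handle, and the stated form comes out directly. What the paper's route gives in addition is the explicit coefficient formula $\sp{f_S,e_R}=\prod_i\sp{f_{S\cap N_i},e_{R\cap N_i}}$ for the full expansion, which your argument only recovers after multiplying out.
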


\begin{proof}
Fix $R \subseteq N$, $R = \fbr{r_1 < \ldots < r_{|R|}}$.

We need to determine for which subsets $S \subseteq N$, with $S = \fbr{s_1 < \ldots < s_{|S|}}$, the inner product $\sp{f_S, e_R} \ne 0$. This requires $|S| = |R| = k$.

By Proposition~\ref{prop:wedge-to-sp}, $\sp{f_S, e_R}$ is equal to the determinant of the matrix
\[
\begin{vmatrix}
\sp{f_{s_1}, e_{r_1}} & \cdots & \sp{f_{s_1}, e_{r_{|R|}}} \\
\vdots & \ddots & \vdots \\
\sp{f_{s_{|S|}}, e_{r_1}} & \cdots & \sp{f_{s_{|S|}}, e_{r_{|R|}}}
\end{vmatrix}.
\]

Since the pair of bases is colorful and the coloring $c$ is compatible with the order on $N$, this matrix is block-diagonal with $d$ blocks. For each $i$, let $S_i := S \cap N_i = \fbr{s_{i,1} < \ldots < s_{i,|S_i|}}$ and $R_i := R \cap N_i = \fbr{r_{i,1} < \ldots < r_{i,|R_i|}}$. The $i$-th block is
\[
\begin{vmatrix}
\sp{f_{s_{i,1}}, e_{r_{i,1}}} & \cdots & \sp{f_{s_{i,1}}, e_{r_{i,|R_i|}}} \\
\vdots & \ddots & \vdots \\
\sp{f_{s_{i,|S_i|}}, e_{r_{i,1}}} & \cdots & \sp{f_{s_{i,|S_i|}}, e_{r_{i,|R_i|}}}
\end{vmatrix}.
\]

If for some $i$ we have $|S_i| < |R_i|$, then this determinant is zero. Since $|S| = |R|$, for all $i\in [d]$ we must have
$|S \cap N_i| = |R \cap N_i|$.
In this case,
\[
\sp{f_S, e_R} = \prod_{i\in[d]} \sp{f_{S \cap N_i}, e_{R \cap N_i}}.
\]
\end{proof}

We want all $\sp{f_{S_i}, e_{R_i}}$ to be nonzero for $|S_i| = |R_i|$, which motivates the following definition combining colorful and generic bases.

\begin{definition}
A pair of orthonormal bases $(\fbr{e_v}_{v \in N}, \fbr{f_v}_{v \in N})$ for $V$ is called \emph{colorful generic} if it is colorful and
\[
\forall i\in[d],\ \forall S, T \subseteq N_i,\ |S| = |T| \quad \sp{f_S, e_T} \ne 0.
\]
\end{definition}

A colorful generic pair of orthonormal bases exists, as the following theorem shows.

\begin{theorem}\label{thrm:colorful-generic-basis-exists}
Let $V$ be a vector space with an orthonormal basis $\fbr{e_v}_{v \in N}$, and let $c: N \to [d]$ be a coloring compatible with the order on $N$. Then there exists an orthonormal basis $\fbr{f_v}_{v \in N}$ such that the pair $(\fbr{e_v}_{v \in N}, \fbr{f_v}_{v \in N})$ is colorful generic.
\end{theorem}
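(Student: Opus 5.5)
The plan is to build the basis $\fbr{f_v}_{v\in N}$ color class by color class, applying Lemma~\ref{lemma:generic-matrix-exists} (equivalently, Theorem~\ref{thrm:generic-basis-exists}) separately to each color class $N_i$. Since $c$ is compatible with the order, each $N_i$ is an interval of $N$, and $N$ is the ordered concatenation $N_1, N_2, \ldots, N_d$. Set $V_i := \spann(\fbr{e_v}_{v\in N_i})$. Then $V = V_1 \oplus \cdots \oplus V_d$, and this is an orthogonal decomposition.

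\textbf{Step 1: a generic basis inside each $V_i$.} For each $i$ with $N_i \neq \emptyset$, let $n_i := |N_i|$. By Lemma~\ref{lemma:generic-matrix-exists}, choose a real orthonormal $n_i\times n_i$ matrix $A^{(i)} = \fbr{a^{(i)}_{v,w}}_{v,w\in N_i}$, indexed by $N_i$ in its induced order, all of whose square minors are nonzero. Define
\[
f_v := \sum_{w\in N_i} a^{(i)}_{v,w}\, e_w \quad \text{for } v\in N_i .
\]

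\textbf{Step 2: the global basis is orthonormal and colorful.} Because $A^{(i)}$ has orthonormal rows, $\fbr{f_v}_{v\in N_i}$ is an orthonormal basis of $V_i$. The spaces $V_i$ are mutually orthogonal, so $\fbr{f_v}_{v\in N}$ is an orthonormal basis of $V$. Colorfulness is immediate: if $c(u)\ne c(v)$, then $f_v\in V_{c(v)}$ and $e_u\in V_{c(u)}$, hence $\sp{f_v,e_u}=0$. Equivalently, the global transition matrix $A$ is block diagonal with blocks $A^{(1)},\ldots,A^{(d)}$.

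\textbf{Step 3: the generic condition within each color.} Fix $i$ and $S,T\subseteq N_i$ with $|S|=|T|$. By Lemma~\ref{lemma:eqviv-to-be-generic}, applied to the global transition matrix $A$,
\[
\sp{f_S,e_T}=\det(A_{S|T}).
\]
Since $S,T\subseteq N_i$, the submatrix $A_{S|T}$ equals $A^{(i)}_{S|T}$. Its rows and columns appear in the same order, because $N_i$ carries the induced order. Hence the minor is nonzero by the choice of $A^{(i)}$. The case $S=T=\emptyset$ gives the value $1$ and is trivial.

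There is no real obstacle here. The only point needing care is Step 3: the wedge products $f_S$ and $e_T$ are formed using the global order on $N$, so one must check that this agrees with the order used for the minors of $A^{(i)}$. This holds because $N_i$ is a contiguous interval whose internal order is the restriction of the order on $N$. Alternatively, one can invoke Proposition~\ref{prop:wedge-to-sp} directly on $S,T\subseteq N_i$, which yields exactly the minor of $A^{(i)}$.
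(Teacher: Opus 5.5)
Your proposal is correct and follows essentially the same route as the paper: choose a generic orthonormal basis inside each color subspace $\spann(\fbr{e_v}_{v\in N_i})$ and take the union. You also write out the orthogonality, colorfulness, and minor computations (via Lemma~\ref{lemma:eqviv-to-be-generic} and the block-diagonal transition matrix) that the paper dismisses as ``easy to check''.
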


\begin{proof}
For each $i$, by Theorem~\ref{thrm:generic-basis-exists}, there exists an orthonormal set of vectors $\fbr{f_v}_{v \in N_i}$ such that the pair $(\fbr{e_v}_{v \in N_i}, \fbr{f_v}_{v \in N_i})$ is generic in the subspace $\mathrm{span}(\fbr{e_v}_{v \in N_i})$.

It is easy to check that the union $\bigcup_{i \in [d]} \fbr{f_v}_{v \in N_i}$ satisfies the requirements of the theorem.
\end{proof}

The following proposition follows from Lemma~\ref{lemma:e_in_terms_of_generic}.

\begin{proposition}\label{prop:e_in_terms_of_generic}
Let $V$ be a vector space, let $c: N \to [d]$ be a coloring compatible with the order on $N$, and let $(\fbr{e_v}_{v \in N},\fbr{f_v}_{v \in N})$ be a colorful generic pair of orthonormal bases. Then for every $R \subseteq N$,
\begin{align*}
e_R &= \bigwedge_{i\in[d]}\ \sum_{S_i \in \binom{N_i}{|R_i|}} \lambda_{S_i} \cdot f_{S_i}, \\
f_R &= \bigwedge_{i\in[d]}\ \sum_{S_i \in \binom{N_i}{|R_i|}} \mu_{S_i} \cdot e_{S_i},
\end{align*}

where $R_i := R \cap N_i$ for each $i\in[d]$, with
$\lambda_{S_i} := \sp{e_{S_i}, f_{R_i}}$ and
$\mu_{S_i} := \sp{f_{S_i}, e_{R_i}}$
nonzero.
\end{proposition}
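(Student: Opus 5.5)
The plan is to deduce both identities from Lemma~\ref{lemma:e_in_terms_of_generic}, using the symmetry of the notions ``colorful'' and ``generic'' (the remarks after the two definitions). Nonvanishing of the coefficients will come from the definition of a colorful generic pair.

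\emph{Step 1 (expansion of $e_R$).} Fix $R\subseteq N$ and put $R_i=R\cap N_i$. Since the pair $(\fbr{e_v},\fbr{f_v})$ is colorful and $c$ is compatible with the order, Lemma~\ref{lemma:e_in_terms_of_generic} gives
\[
e_R=\bigwedge_{i\in[d]}\ \sum_{S_i\in\binom{N_i}{|R_i|}}\sp{f_{S_i},e_{R_i}}\, f_{S_i}.
\]
The mechanism is that $\fbr{f_S}$ is an orthonormal basis of $\bigwedge V$ (Proposition~\ref{prop:wedge-indep-basis}). Moreover, $\sp{f_S,e_R}$ vanishes unless $|S\cap N_i|=|R_i|$ for every $i$, and then it factors as $\prod_i\sp{f_{S\cap N_i},e_{R_i}}$. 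Expanding the wedge of the $d$ sums and regrouping, using the sign rule of Proposition~\ref{prop:wedge-indep-basis}, recovers exactly this orthonormal expansion.

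\emph{Step 2 (expansion of $f_R$).} I would swap the roles of the two bases; this is legitimate by the symmetry remarks. Running Step~1 for the pair $(\fbr{f_v},\fbr{e_v})$ gives
\[
f_R=\bigwedge_{i\in[d]}\ \sum_{S_i}\sp{e_{S_i},f_{R_i}}\, e_{S_i}.
\]

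\emph{Step 3 (matching the stated coefficients).} This is where I expect the main obstacle. The statement attaches $\lambda_{S_i}=\sp{e_{S_i},f_{R_i}}$ to the expansion of $e_R$ in the $f$-basis, and $\mu_{S_i}=\sp{f_{S_i},e_{R_i}}$ to the expansion of $f_R$ in the $e$-basis. By Lemma~\ref{lemma:eqviv-to-be-generic}, if $A$ is the transition matrix, then the coefficients produced in Step~1 are $\det A_{S_i|R_i}$, whereas the stated ones are $\det A_{R_i|S_i}$. Hence the stated identities hold literally precisely when these minors agree. They agree, for instance, when each colour block of $A$ is symmetric, i.e.\ when $\sp{f_u,e_v}=\sp{f_v,e_u}$ for $u,v$ of the same colour.

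My first attempt would therefore be to check whether this symmetry can be assumed without loss of generality. The idea is that in every later application only the existence of some colorful generic pair matters. So I would try to construct, via Theorem~\ref{thrm:colorful-generic-basis-exists}, a pair whose blocks are symmetric orthogonal matrices with all minors nonzero. One route is to take a symmetric generic orthogonal matrix $I-2uu^{\top}/\sp{u,u}$ for a generic $u$ and verify its minors are nonzero by the polynomial argument of Lemma~\ref{lemma:generic-matrix-exists}. Failing that, I would record that what the rest of the paper actually needs is the \emph{form} of the expansion: a wedge over colours of sums over $\binom{N_i}{|R_i|}$ with all coefficients nonzero. This form is delivered by Steps~1--2 regardless of which inner product labels the coefficient.

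\emph{Step 4 (nonvanishing).} In every case the coefficients are inner products $\sp{f_S,e_T}$ or $\sp{e_S,f_T}=\sp{f_T,e_S}$ with $S,T\subseteq N_i$ and $|S|=|T|$. These are nonzero by the definition of a colorful generic pair, which completes the plan.
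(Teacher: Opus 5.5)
Steps~1, 2 and~4 are exactly the paper's argument. The paper only says that the proposition follows from Lemma~\ref{lemma:e_in_terms_of_generic}, applied to the pair $(\fbr{e_v},\fbr{f_v})$ and, by the symmetry remark, to $(\fbr{f_v},\fbr{e_v})$; nonvanishing then comes from the definition of a colorful generic pair.

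Your Step~3 diagnosis is also correct. The lemma gives the coefficient $\sp{f_{S_i},e_{R_i}}$ in the expansion of $e_R$, and $\sp{e_{S_i},f_{R_i}}$ in the expansion of $f_R$. So the statement has the definitions of $\lambda$ and $\mu$ interchanged, and read literally it is false. For example, take $d=1$, $N=\fbr{1,2}$, $f_1=\cos\theta\, e_1+\sin\theta\, e_2$, $f_2=-\sin\theta\, e_1+\cos\theta\, e_2$ with $\theta=\pi/4$, which is a generic pair. Then $e_1=\cos\theta\, f_1-\sin\theta\, f_2$, whereas the stated coefficient of $f_2$ would be $\sp{e_2,f_1}=\sin\theta$.

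The first repair you suggest would not work. Take a Householder matrix $H=I-2uu^{\top}/\sp{u,u}$ and disjoint $S,T$, and write $u_S$ for the restriction of $u$ to the coordinates in $S$. Then $H_{S|T}=-2u_Su_T^{\top}/\sp{u,u}$ has rank one. So every such minor of size at least $2$ vanishes as soon as some $|N_i|\ge 4$, and the pair is not generic.

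No symmetric construction is needed anyway. The right fix is to set $\lambda_{S_i}:=\sp{f_{S_i},e_{R_i}}$ and $\mu_{S_i}:=\sp{e_{S_i},f_{R_i}}$, which is precisely what your Steps~1--2 prove. As you observed, the later uses only need the shape of the expansion and the nonvanishing of its coefficients, which your Step~4 supplies. These uses are Proposition~\ref{prop:llcorner-give-all-edges} and the inclusion of $U$ into the span of the $g_{\mb{r}}\llcorner f_R$ in Section~\ref{sec:lower-bound-main-thrm}.
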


\subsection{Left Interior Product} \label{subsec:left-interior}

To construct the mapping used in Lemma~\ref{lemma:lower-bound-kalai}, we need the following bilinear operation on the exterior algebra.

\begin{definition}
Let $N$ be a finite set of size $n$ with a fixed linear order. Let $V$ be a real vector space of dimension $n$ with a fixed orthonormal basis $\fbr{e_v}_{v \in N}$. Define the bilinear operation called the \emph{left interior product} $\llcorner: \bigwedge V \times \bigwedge V \to \bigwedge V$, which acts on basis vectors as follows:
\[
\forall S, T \subseteq N \quad e_T \llcorner e_S =
\begin{cases}
\sgn(S \setminus T,\ T) \cdot e_{S \setminus T}, & \text{if } T \subseteq S; \\
0, & \text{otherwise}.
\end{cases}
\]
\end{definition}

To show that this operation does not depend on the choice of orthonormal basis, we prove the following lemma.

\begin{lemma}\label{lemma:llcorner-prop-for-indep}
For all elements $h, f, g \in \bigwedge V$, we have
\[
\sp{h,\ g \llcorner f} = \sp{h \wedge g,\ f}.
\]
\end{lemma}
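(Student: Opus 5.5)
By bilinearity of $\wedge$, $\llcorner$ and the inner product, it suffices to verify the identity when $h = e_R$, $g = e_T$, $f = e_S$ are basis vectors of $\bigwedge V$ for arbitrary subsets $R, T, S \subseteq N$. The plan is to compute both sides explicitly from the definitions of $\wedge$ and $\llcorner$ on basis vectors, and to use that $\fbr{e_S}_{S\subseteq N}$ is orthonormal.

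For the left-hand side, $e_T \llcorner e_S$ is zero unless $T \subseteq S$, in which case it equals $\sgn(S\setminus T, T)\, e_{S\setminus T}$. Hence
\[
\sp{e_R,\ e_T \llcorner e_S} =
\begin{cases}
\sgn(S\setminus T,\ T), & \text{if } T\subseteq S \text{ and } R = S\setminus T;\\
0, & \text{otherwise}.
\end{cases}
\]
For the right-hand side, $e_R \wedge e_T$ is zero unless $R\cap T=\emptyset$, in which case it equals $\sgn(R,T)\, e_{R\cup T}$. Hence
\[
\sp{e_R \wedge e_T,\ e_S} =
\begin{cases}
\sgn(R,\ T), & \text{if } R\cap T = \emptyset \text{ and } S = R\cup T;\\
0, & \text{otherwise}.
\end{cases}
\]

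It remains to check that the two nonvanishing conditions coincide and that the signs agree. The pair of conditions ``$T\subseteq S$ and $R = S\setminus T$'' is equivalent to ``$R \cap T=\emptyset$ and $S = R\cup T$'', since both say exactly that $S$ is the disjoint union of $R$ and $T$. Under these conditions $S\setminus T = R$, so the signs $\sgn(S\setminus T, T)$ and $\sgn(R,T)$ are literally equal. Both sides therefore agree on all basis triples, and the lemma follows by trilinearity.

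No step is a genuine obstacle. The only point requiring care is that the sign convention in the definition of $\llcorner$, namely $\sgn(S\setminus T, T)$ with $T$ placed on the right, matches the order $h\wedge g$ on the right-hand side. It does, so no reordering sign appears.
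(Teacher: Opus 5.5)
Your proposal is correct and follows essentially the same route as the paper. Both reduce by multilinearity to basis vectors $e_R, e_T, e_S$ and observe that each side vanishes unless $S$ is the disjoint union of $R$ and $T$; in that case the signs $\sgn(S\setminus T, T)$ and $\sgn(R,T)$ coincide, and your explicit two-case computation is simply a more detailed version of the paper's argument.
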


\begin{proof}

By multilinearity, it suffices to verify the identity on basis vectors. Let $g=e_T$, $f=e_S$, and $h=e_R$. We may assume that $T\subseteq S$, since otherwise both sides equal zero. Likewise, we may assume that $R\subseteq S\setminus T$ and $|R|+|T|=|S|$, which forces $R=S\setminus T$. Therefore,
\[
\sp{e_{S \setminus T},\, e_T \llcorner e_S}
= \sgn(S \setminus T,\, T)
= \sp{e_{S \setminus T} \wedge e_T,\, e_S}.
\]
\end{proof}

\begin{proposition}\label{prop:llcorner-basis-formula}
Let $\fbr{f_v}_{v \in N}$ be an orthonormal basis for $V$. Then for all $S, T \subseteq N$,
\[
f_T \llcorner f_S =
\begin{cases}
\sgn(S \setminus T,\ T) \cdot f_{S \setminus T}, & \text{if } T \subseteq S; \\
0, & \text{otherwise}.
\end{cases}
\]
\end{proposition}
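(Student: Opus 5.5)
The plan is to characterize $f_T \llcorner f_S$ through inner products, using Lemma~\ref{lemma:llcorner-prop-for-indep}, and then read off the coefficients in the orthonormal basis $\fbr{f_R}_{R\subseteq N}$ of $\bigwedge V$ supplied by Proposition~\ref{prop:wedge-indep-basis}. Lemma~\ref{lemma:llcorner-prop-for-indep} was proved for the defining basis $\fbr{e_v}$, but its statement is basis-free: it holds for all $h,f,g \in \bigwedge V$. So the operation $\llcorner$ is the unique bilinear map satisfying $\sp{h, g\llcorner f} = \sp{h\wedge g, f}$ for all $h$, by nondegeneracy of the inner product. The proposition will then follow once I show that the right-hand side of the claimed formula satisfies the same identity for every basis vector $h = f_R$.

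Concretely, fix $S,T\subseteq N$ and expand
\[
f_T\llcorner f_S = \sum_{R\subseteq N} \sp{f_R,\ f_T\llcorner f_S}\, f_R = \sum_{R\subseteq N} \sp{f_R\wedge f_T,\ f_S}\, f_R .
\]
The first equality holds because $\fbr{f_R}$ is orthonormal, and the second is Lemma~\ref{lemma:llcorner-prop-for-indep}. Next, apply the multiplication rule of Proposition~\ref{prop:wedge-indep-basis}. If $R\cap T\neq\emptyset$, then $f_R\wedge f_T=0$. Otherwise $f_R\wedge f_T=\sgn(R,T)\,f_{R\cup T}$, and by orthonormality its inner product with $f_S$ equals $\sgn(R,T)$ if $R\cup T=S$ and $0$ otherwise.

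There are now two cases. If $T\not\subseteq S$, no $R$ satisfies $R\cup T=S$, so every coefficient vanishes and $f_T\llcorner f_S=0$. If $T\subseteq S$, the conditions $R\cap T=\emptyset$ and $R\cup T=S$ force $R=S\setminus T$. The only surviving term is therefore $\sgn(S\setminus T,T)\,f_{S\setminus T}$, which is exactly the claimed formula.

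There is no real obstacle. The one point that needs care is confirming that Lemma~\ref{lemma:llcorner-prop-for-indep} applies to arbitrary elements of $\bigwedge V$, and hence to the $f$-basis vectors, rather than only to $e$-basis vectors. This is immediate from its statement, since it was proved by multilinearity over all of $\bigwedge V$. I also need the sign convention $\sgn(R,T)$ in Proposition~\ref{prop:wedge-indep-basis} to match the ordering of $f_{S\setminus T}\wedge f_T$; it does, because that proposition gives $f_R\wedge f_T=\sgn(R,T)f_{R\cup T}$ with the same order of arguments.
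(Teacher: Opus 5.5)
Your proof is correct and follows essentially the same route as the paper: both reduce the problem to the coefficients $\sp{f_R,\ f_T \llcorner f_S} = \sp{f_R \wedge f_T,\ f_S}$ via Lemma~\ref{lemma:llcorner-prop-for-indep}, and evaluate these with the multiplication rule and orthonormality from Proposition~\ref{prop:wedge-indep-basis}. The paper states the resulting case split in a single line, whereas you write out the basis expansion and the forcing of $R = S \setminus T$ explicitly.
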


\begin{proof}
Let $R \subseteq N$ be arbitrary. By Lemma~\ref{lemma:llcorner-prop-for-indep},
\[
\sp{f_R,\ f_T \llcorner f_S} = \sp{f_R \wedge f_T,\ f_S} =
\begin{cases}
\sgn(S \setminus T,\ T), & \text{if } T \subseteq S,\ R = S \setminus T; \\
0, & \text{otherwise}.
\end{cases}
\]
\end{proof}

Thus, the definition of $\llcorner$ is independent of the choice of an orthonormal basis in $V$.

The following lemma shows how $\llcorner$ interacts with a colorful generic basis.

\begin{lemma}\label{lemma:llcorner-with-wedge}
Let $N$ be a finite set of size $n$ with a linear order. Let $c: N \to [d]$ be a coloring compatible with the order on $N$. Let $V$ be a real vector space of dimension $n$ with orthonormal basis $\fbr{e_v}_{v \in N}$. Then for all $S, T \subseteq N$,
\[
e_T \llcorner e_S = (-1)^q \bigwedge_{i\in[d]} e_{T_i} \llcorner e_{S_i},
\]
where $T_i = T \cap N_i$, $S_i = S \cap N_i$, and $q = \sum_{i\in[d]} \sum_{j=i+1}^{d} (|S_j| - |T_j|) |T_i|$.
\end{lemma}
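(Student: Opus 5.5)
We argue directly on basis vectors; no bilinearity reduction is needed beyond noting that both sides are defined on the basis $\fbr{e_S}$. Fix $S,T\subseteq N$ and write $S_i=S\cap N_i$, $T_i=T\cap N_i$. The condition $T\subseteq S$ holds iff $T_i\subseteq S_i$ for every $i$. If it fails for some $i$, then $e_{T_i}\llcorner e_{S_i}=0$. Since the wedge of the blocks contains this zero factor, the right-hand side vanishes, and the left-hand side vanishes by definition. So we may assume $T_i\subseteq S_i$ for all $i$.

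In that case the left-hand side is $\sgn(S\setminus T,T)\,e_{S\setminus T}$. The right-hand side is $(-1)^q\prod_i\sgn(S_i\setminus T_i,T_i)$ times $\bigwedge_i e_{S_i\setminus T_i}$. The sets $S_i\setminus T_i$ lie in distinct color classes $N_i$. Because the coloring is compatible with the order, every element of $N_i$ precedes every element of $N_j$ for $i<j$. Hence $\inv(S_i\setminus T_i,S_j\setminus T_j)=0$ for $i<j$, and iterating the definition of $\wedge$ (associativity, Proposition~\ref{prop:wedge-associative}) gives $\bigwedge_i e_{S_i\setminus T_i}=e_{S\setminus T}$ with sign $+1$.

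It remains to compare signs, i.e.\ to show
\[
\inv(S\setminus T,T)\equiv q+\sum_i \inv(S_i\setminus T_i,T_i)\pmod 2 .
\]
Split the pairs $(a,b)\in (S\setminus T)\times T$ with $a>b$ according to the colors $c(a)=j$ and $c(b)=i$. If $i=j$, these pairs are counted by $\inv(S_j\setminus T_j,T_j)$. If $j<i$, then $a<b$ by compatibility, so there is no inversion. If $j>i$, then $a>b$ always, contributing $|S_j\setminus T_j|\cdot|T_i|=(|S_j|-|T_j|)|T_i|$. Summing over $i<j$ gives exactly $q$, and in fact the equality holds over the integers, not just mod $2$.

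The only step needing care is this sign bookkeeping: the orientation of $\inv(S\setminus T,T)$ and the matching of the index roles in $q$, where $j>i$ pairs the $S_j\setminus T_j$ part with the $T_i$ part. The rest is immediate from the definitions.
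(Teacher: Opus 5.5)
Your proof is correct and follows the same route as the paper. Both reduce to the case $T_i\subseteq S_i$, identify $\bigwedge_i e_{S_i\setminus T_i}$ with $e_{S\setminus T}$, and verify $\inv(S\setminus T,T)=q+\sum_i\inv(S_i\setminus T_i,T_i)$ by sorting inverted pairs according to the colors of their entries. Your version is slightly cleaner in two places. The paper's displayed chain writes the same-color contribution as $(|S_i|-|T_i|)|T_i|$, which is false in general, while you correctly leave it as $\inv(S_i\setminus T_i,T_i)$. You also justify the $+1$ sign in $\bigwedge_i e_{S_i\setminus T_i}=e_{S\setminus T}$, which the paper uses without comment.
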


\begin{proof}
The proof follows similarly to Lemma 3.4 in \cite{2023Bulavka}.

We may assume that $T_i \subseteq S_i$, since otherwise both sides equals zero.

We have:
\[
(-1)^q \bigwedge_{i\in[d]} e_{T_i} \llcorner e_{S_i} = (-1)^q \left(\prod_{i\in[d]} \sgn(S_i \setminus T_i,\ T_i)\right) e_{S \setminus T}.
\]
It suffices to show:
\[
\inv(S \setminus T,\ T) = q + \sum_{i\in[d]} \inv(S_i \setminus T_i,\ T_i).
\]

Since the coloring $c$ is compatible with the order on $N$:
\begin{multline*}\inv(S\setminus T,T) = |\fbr{s\in S\setminus T,t\in T\mid s > t}| = \sum_{i\in[d]}\sum_{j=i}^{d}|\fbr{s\in S_j\setminus T_j,t\in T_i\mid s > t}|\\
    = \sum_{i\in[d]}\sum_{j=i}^{d}(|S_j|-|T_j|)|T_i|=q + \sum_{i\in[d]}(|S_i|-|T_i|)|T_i|=q + \sum_{i\in[d]} \inv(S_i\setminus T_i, T_i).
\end{multline*}
\end{proof}

The following result relates the left interior product to a colorful generic pair of orthonormal bases.

\begin{proposition}\label{prop:llcorner-give-all-edges}
Let $V$ be a real vector space, and let $c: N \to [d]$ be a coloring compatible with the order on $N$. Let $(\fbr{e_v}_{v \in N}, \fbr{f_v}_{v \in N})$ be a colorful generic pair of orthonormal bases. Then for all $R, S \subseteq N$,
\[
f_R \llcorner e_S = \bigwedge_{i\in[d]} \sum_{W_i \in \binom{S_i}{|S_i| - |R_i|}} \lambda_{W_i} \cdot e_{W_i},
\]
where $R_i = R \cap N_i$, $S_i = S \cap N_i$, and $\lambda_{W_i}$ are nonzero scalars.
\end{proposition}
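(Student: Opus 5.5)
The idea is to expand $f_R$ in the $e$-basis and then split the interior product color by color. First, by Proposition~\ref{prop:e_in_terms_of_generic}, write
\[
f_R = \bigwedge_{i\in[d]} \sum_{T_i \in \binom{N_i}{|R_i|}} \mu_{T_i}\, e_{T_i},
\]
with all $\mu_{T_i}\ne 0$. Expanding the wedge by multilinearity gives $f_R$ as a sum over tuples $(T_1,\ldots,T_d)$ with $T_i\in\binom{N_i}{|R_i|}$. Since the colors are ordered compatibly, $e_{T_1}\wedge\cdots\wedge e_{T_d}=e_{T}$ with sign $+1$, where $T=\bigcup_i T_i$. Hence
\[
f_R=\sum_{(T_i)}\Bigl(\prod_i \mu_{T_i}\Bigr) e_{T}.
\]

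Next, bilinearity of $\llcorner$ gives $f_R\llcorner e_S=\sum_{(T_i)} \prod_i\mu_{T_i}\, e_T\llcorner e_S$. Apply Lemma~\ref{lemma:llcorner-with-wedge} to each term:
\[
e_T\llcorner e_S=(-1)^q\bigwedge_i e_{T_i}\llcorner e_{S_i}.
\]
The key observation is that $q=\sum_i\sum_{j>i}(|S_j|-|T_j|)|T_i|$ depends only on the sizes $|T_i|=|R_i|$ and $|S_i|$. So the sign is a single global sign $\epsilon=\pm1$, independent of the tuple. The sum therefore factors back into a wedge of sums:
\[
f_R\llcorner e_S=\epsilon\bigwedge_{i\in[d]}\sum_{T_i\in\binom{N_i}{|R_i|}}\mu_{T_i}\, e_{T_i}\llcorner e_{S_i}.
\]

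In each factor, $e_{T_i}\llcorner e_{S_i}$ vanishes unless $T_i\subseteq S_i$. When $T_i\subseteq S_i$ it equals $\pm e_{S_i\setminus T_i}$. The map $T_i\mapsto W_i=S_i\setminus T_i$ is a bijection between $\binom{S_i}{|R_i|}$ and $\binom{S_i}{|S_i|-|R_i|}$. So factor $i$ equals $\sum_{W_i}\pm\mu_{S_i\setminus W_i}\,e_{W_i}$, and every coefficient is nonzero. Absorbing $\epsilon$ into the coefficients of one factor, say $i=1$, gives the stated form with $\lambda_{W_i}=\pm\mu_{S_i\setminus W_i}$.

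The main things to check carefully are that the sign $q$ really is uniform over all tuples, and the degenerate case. If $|R_i|>|S_i|$ for some $i$, then $\binom{S_i}{|S_i|-|R_i|}=\emptyset$ by the paper's convention, so the empty sum is $0$. This is consistent, since then no $T_i\subseteq S_i$ exists and both sides vanish.
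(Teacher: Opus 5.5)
Your proof is correct and follows essentially the same route as the paper. Both expand $f_R$ via Proposition~\ref{prop:e_in_terms_of_generic}, apply Lemma~\ref{lemma:llcorner-with-wedge} termwise (noting the sign exponent depends only on $|R_i|$ and $|S_i|$), keep only $T_i\subseteq S_i$, and reindex by $W_i=S_i\setminus T_i$; your explicit expand-then-refactor step and your check of the degenerate case $|R_i|>|S_i|$ simply spell out what the paper compresses into ``by multilinearity.''
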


\begin{proof}
By Proposition~\ref{prop:e_in_terms_of_generic},
\[
f_R = \bigwedge_{i\in[d]} \sum_{W_i \in \binom{N_i}{|R_i|}} \mu_{W_i} \cdot e_{W_i}.
\]
By multilinearity of the left interior product and Lemma~\ref{lemma:llcorner-with-wedge},
\[
f_R \llcorner e_S = (-1)^q \bigwedge_{i\in[d]} \sum_{W_i \in \binom{N_i}{|R_i|}} \mu_{W_i} \cdot e_{W_i} \llcorner e_{S_i},
\]
where \[q = \sum_{i\in[d]} \sum_{j=i+1}^d |W_i| (|S_j| - |W_j|) = \sum_{i\in[d]} \sum_{j=i+1}^d |R_i| (|S_j| - |R_j|).\]

The term $e_{W_i} \llcorner e_{S_i}$ is nonzero only when $W_i \subseteq S_i$. Thus:
\[
f_R \llcorner e_S = (-1)^q \bigwedge_{i\in[d]} \sum_{T_i \in \binom{S_i}{|S_i| - |R_i|}} \mu_{S_i \setminus T_i} \cdot \sgn(T_i,\ S_i \setminus T_i) \cdot e_{T_i},
\]
and the result follows with the following coefficients:
\[
\lambda_{T_i} = \mu_{S_i \setminus T_i} \cdot \sgn(T_i,\ S_i \setminus T_i) \cdot
\begin{cases}
(-1)^q, & i = 1; \\
1, & i \ne 1.
\end{cases}
\]
\end{proof}

The following proposition is used in the proof of Theorem~\ref{thrm:main-theorem} to control how coefficients of constructed vectors depend on their parameters.

\begin{proposition}\label{prop:llcorner-sign-decompose}
Let $N$ be a finite set of size $n$ with a linear order, and let $c: N \to [d]$ be a coloring compatible with the order on $N$. Let $V$ be a real vector space of dimension $n$ with orthonormal basis $\fbr{e_v}_{v \in N}$. Identify $N_i$ with $[n_i] \times \fbr{i}$, where $n_i = |N_i|$.

Let $\mb{r}, \mb{s}, \mb{m} \in \Nz^d$ be integer vectors such that $\forall i \in [d],\ r_i \ge s_i \ge m_i$. Let
\[
T = \bigdisjointcup_{i \in [d]} T_i \times \fbr{i} \subseteq N,\quad \forall i \in [d]\quad T_i \in \binom{[m_i - 1] \cup ([n_i] \setminus [r_i])}{m_i}.
\]
Then,
\[
\bigwedge_{i\in[d]} e_{([r_i] \setminus [s_i]) \times \fbr{i}} \ \llcorner\ \bigwedge_{i\in[d]} e_{(T_i \cup ([r_i] \setminus [m_i])) \times \fbr{i}} = (-1)^{q_1(\mb{r}, \mb{s}) + q_2(\mb{r}, T) + q_3(\mb{s}, T)} \bigwedge_{i\in[d]} e_{(T_i \cup ([s_i] \setminus [m_i])) \times \fbr{i}},
\]
where:
\begin{align*}
q_1(\mb{r}, \mb{s}) &= \sum_{i \in [d]} r_i \sum_{j \in [d] \setminus \fbr{i}} s_j, \\
q_2(\mb{r}, T) &= \sum_{i \in [d]} r_i\cdot|T_i \setminus [m_i - 1]|, \\
q_3(\mb{s}, T) &= -\sum_{i \in [d]} s_i\cdot\left( |T_i \setminus [m_i - 1]| + \sum_{j \in [d] \setminus \fbr{i}} s_j \right).
\end{align*}
\end{proposition}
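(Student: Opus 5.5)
The plan is to collapse both wedge products over colours into single basis vectors, apply Lemma~\ref{lemma:llcorner-with-wedge} to split the interior product colour by colour, compute each one-colour sign by counting inversions, and then match the total exponent with $q_1+q_2+q_3$ modulo $2$.

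\textbf{Step 1: collapse the wedges.} Write
\[
A_i := ([r_i]\setminus[s_i])\times\{i\}, \qquad B_i := (T_i\cup([r_i]\setminus[m_i]))\times\{i\}.
\]
The coloring is compatible with the order, so every element of $N_i$ precedes every element of $N_j$ when $i<j$. Hence $\sgn(\cdot,\cdot)=1$ at each step of $\bigwedge_{i\in[d]} e_{A_i}$, and this product equals $e_A$ with $A=\bigcup_i A_i$. In the same way the two other colour-wise wedges equal $e_B$ with $B=\bigcup_i B_i$, and $e_{B\setminus A}$.

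\textbf{Step 2: check containments and split by colour.} Since $T_i\subseteq[m_i-1]\cup([n_i]\setminus[r_i])$, the set $T_i$ is disjoint from $[r_i]\setminus[m_i]$. Because $s_i\ge m_i$, we get $A_i\subseteq B_i$, and
\[
B_i\setminus A_i = (T_i\cup([s_i]\setminus[m_i]))\times\{i\}.
\]
Lemma~\ref{lemma:llcorner-with-wedge} then gives
\[
e_A\llcorner e_B = (-1)^q\bigwedge_i e_{A_i}\llcorner e_{B_i}, \qquad q=\sum_i\sum_{j>i}(|B_j|-|A_j|)\,|A_i|.
\]
Using $|T_j|=m_j$, we have $|B_j|-|A_j|=m_j+(r_j-m_j)-(r_j-s_j)=s_j$. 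So
\[
q=\sum_{i<j}s_j(r_i-s_i).
\]

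\textbf{Step 3: compute the one-colour signs.} For a single colour, $e_{A_i}\llcorner e_{B_i}=\sgn(B_i\setminus A_i,A_i)\,e_{B_i\setminus A_i}$. An inversion is a pair $(x,y)$ with $x\in B_i\setminus A_i$, $y\in A_i=[r_i]\setminus[s_i]$, and $x>y$. The elements of $[m_i-1]$ and of $[s_i]\setminus[m_i]$ lie below all of $A_i$, so they contribute nothing. The elements $x\in T_i$ with $x>r_i$, that is, $x\in T_i\setminus[m_i-1]$, exceed all $r_i-s_i$ elements of $A_i$. Writing $t_i:=|T_i\setminus[m_i-1]|$, the number of inversions is therefore $t_i(r_i-s_i)$. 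The total sign exponent is
\[
E=\sum_{i<j}s_j(r_i-s_i)+\sum_i t_i(r_i-s_i),
\]
and the right-hand side basis vector is exactly the one claimed in the statement.

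\textbf{Step 4: match with $q_1+q_2+q_3$ modulo $2$.} The $T$-dependent parts match directly: $q_2$ together with the $-\sum_i s_it_i$ part of $q_3$ gives $\sum_i t_i(r_i-s_i)$. In the remaining part of $q_3$, the term $\sum_{i\ne j}s_is_j$ is twice $\sum_{i<j}s_is_j$, hence even. What is left is to compare $q_1=\sum_{i\ne j}r_is_j$ with the cross-colour term $\sum_{i<j}(r_is_j-s_is_j)$ from Step 2.

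\textbf{Main obstacle.} I expect Step 4's cross-colour comparison to be the hardest step. It depends on the order in which the blocks are peeled off in Lemma~\ref{lemma:llcorner-with-wedge}, and on which convention ($i<j$ versus $j<i$) accompanies each product $r_is_j$. I would first recheck the block-ordering sign in that lemma, and then test the comparison on a small two-colour example, for instance $d=2$, $\mb r=(2,2)$, $\mb s=(1,1)$, $\mb m=0$, to fix the parity.

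The structural point the later proof needs is that the exponent splits as a function of $(\mb r,\mb s)$, plus a function of $(\mb r,T)$, plus a function of $(\mb s,T)$. Steps 1--3 establish this in any case, with the $(\mb r,\mb s)$-part given by the Step 2 term.
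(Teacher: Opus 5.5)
Your Steps 1--3 are correct and follow the paper's route: collapse the colour-wise wedges, apply Lemma~\ref{lemma:llcorner-with-wedge}, and count inversions inside each colour. They give the true exponent $E=\sum_{i\in[d]}(r_i-s_i)\bigl(\sum_{j>i}s_j+t_i\bigr)$. The gap is Step~4, and you cannot close it, because the congruence it asks for is false. Exactly,
\[
q_1+q_2+q_3-E=\sum_{i\in[d]}(r_i-s_i)\sum_{j<i}s_j ,
\]
and this is odd in the very test case you proposed. Take $d=2$, $\mb{r}=(2,2)$, $\mb{s}=(1,1)$, $\mb{m}=\mb{0}$, so $T_1=T_2=\emptyset$. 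Then $A=\{(2,1),(2,2)\}$ and $B\setminus A=\{(1,1),(1,2)\}$. The only inversion is $(1,2)>(2,1)$, so the left-hand side equals $-e_{\{(1,1),(1,2)\}}$. The statement predicts $+e_{\{(1,1),(1,2)\}}$, since $q_1+q_2+q_3=4+0-2=2$. So the proposition as printed is false. The paper's own proof slips at the same place: its $\tilde q$ writes the cross-colour part as $\sum_i(r_i-s_i)\sum_{j\ne i}s_j$, whereas Lemma~\ref{lemma:llcorner-with-wedge} only supplies the terms with $j>i$.

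The repair is to replace $\sum_{j\in[d]\setminus\{i\}}$ by $\sum_{j>i}$ in both $q_1$ and $q_3$. Then $q_1+q_2+q_3=E$ identically, and your Steps 1--3 are a complete proof of the corrected statement.

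Your closing remark is also the right one. Section~\ref{sec:lower-bound-main-thrm} only needs the exponent to split into a function of $(\mb{r},\mb{s})$, one of $(\mb{r},T)$ and one of $(\mb{s},T)$, and that survives the correction. This holds provided $g_{\mb{r}}$ is defined with the corrected $q_1$. With the printed $q_1$, a residual sign $(-1)^{\sum_i r_i\sum_{j<i}s_j}$ remains that couples $\mb{r}$ and $\mb{s}$. For $\mc{S}=\{(1,0),(0,1)\}$, $\mc{R}=\{(2,1),(1,2)\}$, $\mb{m}=\mb{0}$, the two choices of $\mb{r}$ then produce the linearly independent vectors $-f_{\{(1,1)\}}+f_{\{(1,2)\}}$ and $f_{\{(1,1)\}}+f_{\{(1,2)\}}$. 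So the claimed $\mb{r}$-independence genuinely fails without the fix.
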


\begin{proof}
By Lemma~\ref{lemma:llcorner-with-wedge} and Proposition~\ref{prop:llcorner-basis-formula},
\begin{align*}
\bigwedge_{i\in[d]} e_{([r_i] \setminus [s_i]) \times \fbr{i}} \ \llcorner\ \bigwedge_{i\in[d]} e_{(T_i \cup ([r_i] \setminus [m_i])) \times \fbr{i}} &= (-1)^{q} \bigwedge_{i\in[d]} e_{([r_i] \setminus [s_i]) \times \fbr{i}}\ \llcorner\ e_{(T_i \cup ([r_i] \setminus [m_i])) \times \fbr{i}}\\
&=(-1)^{\tilde{q}} \bigwedge_{i\in[d]} e_{(T_i \cup ([s_i] \setminus [m_i])) \times \fbr{i}},
\end{align*}
where:
\[
\tilde{q} = \sum_{i \in [d]} (r_i - s_i) \left( \sum_{j \in [d] \setminus \fbr{i}} s_j \right) + \sum_{i \in [d]} (r_i - s_i)\cdot|T_i \setminus [m_i - 1]|,
\]
since \[\inv(T_i\disjointcup[s_i]\setminus[m_i],\ [r_i]\setminus[s_i])=(r_i-s_i)\cdot|T_i\setminus[r_i]|=(r_i-s_i)\cdot|T_i\setminus[m_i - 1]|.\]
This yields:
\[
\tilde{q} = q_1(\mb{r}, \mb{s}) + q_2(\mb{r}, T) + q_3(\mb{s}, T).
\]
\end{proof}

\subsection{Lower Bound via exterior algebra} \label{subsec:lower-bound-via-ext}

\begin{definition}
Let $G$ be a hypergraph. Take a real vector space $V$ with an orthonormal basis $\fbr{e_v}_{v \in V(G)}$ and define \[\spann G := \spann \fbr{e_S \mid S \in E(G)} \subseteq \bigwedge V.\]

For an element $m \in \spann G$, define its support as
\[
\supp(m) := \fbr{S \subseteq V(G) \mid \sp{e_S,\ m} \ne 0}.
\]
\end{definition}

The following lemma reduces the task of obtaining a lower bound for $\wdir((G,c),(H,t))$ to finding a suitable linear subspace of $\spann G$. It is analogous to Lemma 4.1 from \cite{2023Bulavka}.

\begin{lemma}\label{lemma:lower-bound-wdir-via-kernel}
Let $d \ge 1$ be an integer, let $(G, c)$ be a $d$-colored hypergraph, and let $\mc{H}$ be a family of $d$-colored hypergraphs. Let $U \subseteq \spann G$ be a subspace such that for every colored copy $(\wt{H},\tilde{t})$ of a hypergraph $(H, t) \in \mc{H}$ in $G$, there exists an element $m \in U$ with $\supp(m) = E(\wt{H})$. Then
\[
\wdir((G, c), \mc{H}) \ge |E(G)| - \dim U.
\]
\end{lemma}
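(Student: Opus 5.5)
We derive this lemma from Lemma~\ref{lemma:lower-bound-kalai}. The idea is to take $f$ to be the orthogonal projection onto $U^\perp$ inside $\spann G$. Concretely, set $W := \spann G$. Let $\pi: W \to U^{\perp}$ be the orthogonal projection onto the orthogonal complement of $U$ in $W$, taken with respect to the inner product of $\bigwedge V$. Define $f: E(G) \to U^\perp$ by $f(S) := \pi(e_S)$.

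For the rank: the vectors $\fbr{e_S}_{S\in E(G)}$ are orthonormal by Definition~\ref{def:exterior-algebra}, so they form a basis of $W$ and $\dim W = |E(G)|$. Since $\pi$ is surjective onto $U^\perp$, we get
\[
\rk f(E(G)) = \dim U^\perp = |E(G)| - \dim U.
\]
It therefore suffices to check condition~\eqref{eq:kalai-la}; the lemma then gives $\wdir((G,c),\mc{H}) \ge |E(G)| - \dim U$.

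To verify~\eqref{eq:kalai-la}, fix $(H,t)\in\mc{H}$ and a colored copy $(\wt H,\tilde t)$ of it in $G$. By hypothesis there is $m\in U$ with $\supp(m)=E(\wt H)$. Since $m\in W$, we can expand
\[
m=\sum_{S\in E(G)}\sp{e_S,m}\,e_S=\sum_{S\in E(\wt H)} \alpha_S e_S,
\]
where every coefficient $\alpha_S=\sp{e_S,m}$ is nonzero. Applying $\pi$ and using $\pi(m)=0$ (because $m\in U$) gives
\[
\sum_{S\in E(\wt H)}\alpha_S f(S)=0.
\]
For any $e\in E(\wt H)$ we have $\alpha_e\neq 0$, so we can solve for $f(e)$:
\[
f(e)=-\alpha_e^{-1}\sum_{S\in E(\wt H)\setminus\fbr e}\alpha_S f(S).
\]
Thus $f(e)$ lies in the span of $f(E(\wt H)\setminus\fbr e)$, and the two ranks in~\eqref{eq:kalai-la} coincide.

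The one point that needs care is that the support is computed in the orthonormal basis $\fbr{e_S}$, so that $\supp(m)$ really equals the set of $S$ with nonzero coefficient in this expansion, and these $S$ all lie in $E(G)$ because $m\in\spann G$. Beyond this bookkeeping there is no real obstacle.
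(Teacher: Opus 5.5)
Your proof is correct and follows essentially the same route as the paper, which takes an arbitrary complement $Y$ with $\spann G = Y \oplus U$ and feeds the projection onto $Y$ along $U$ into Lemma~\ref{lemma:lower-bound-kalai}; you choose the orthogonal complement specifically. Your explicit check of condition~\eqref{eq:kalai-la}, using the nonzero coefficients of $m$, spells out a step the paper leaves implicit.
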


\begin{proof}
From linear algebra, there exists a linear subspace $Y$ such that $\spann G = Y \oplus U$. Let $\Gamma: E(G) \to Y$ be the projection onto $Y$. Then $\Gamma$ satisfies the conditions of Lemma~\ref{lemma:lower-bound-kalai}, and hence
\[
\wdir((G, c), \mc{H}) \ge \rk(\Gamma(E(G))) = |E(G)| - \dim U.
\]
\end{proof}

\section{Lower Bound in Theorem \ref{thrm:main-theorem}} \label{sec:lower-bound-main-thrm}

\begin{proof}

Without loss of generality, assume that $\mc{R} = \mc{R}(\mb{n}) \neq \emptyset$.

Let $N = N_1 \disjointcup \ldots \disjointcup N_d$, where for each $i \in [d]$, we define $N_i = [n_i] \times \{i\}$. Define a coloring $c: N \to [d]$ such that $c(N_i) = \{i\}$. Let introduce a linear order on $N$ by declaring that for all $(a,i), (b,j) \in N$, we have $(a,i) < (b,j)$ if and only if $(i < j)$ or $(i = j$ and $a < b)$.


Let $V$ be a vector space of dimension $\sum_{i \in [d]} n_i$. Take a colorful generic pair of orthonormal bases $\fbr{e_v}_{v \in N}, \fbr{f_v}_{v \in N}$, which exists by Theorem \ref{thrm:colorful-generic-basis-exists}.

For each $\mb{r} \in \mc{R}$ define:
\[
g_{\mb{r}} = \sum_{\mb{s} \in \mc{S}} (-1)^{q_1(\mb{r}, \mb{s})} \bigwedge_{i\in[d]} f_{([r_i] \setminus [s_i]) \times \{i\}},
\]
where $q_1(\mb{r}, \mb{s}) = \sum_{i \in [d]} r_i \sum_{j \in [d] \setminus \{i\}} s_j$ (as in Proposition \ref{prop:llcorner-sign-decompose}).

Define
\[
U = \spann \{g_{\mb{r}} \llcorner e_R \mid \mb{r}\in\mc{R},\ R \subseteq N, \text{ such that}\ |R \cap N_i| = r_i  \text{ for all } i\in[d]\}.
\]

We now verify that $U$ satisfies the conditions of Lemma \ref{lemma:lower-bound-wdir-via-kernel}.

Fix $\mb{r} \in \mc{R}$ and consider a colored copy $H$ of the hypergraph $\tprod{\mc{S}}{\mb{r}}$ in $\tprod{\mc{S}}{\mb{n}}$. Define $m = g_{\mb{r}} \llcorner e_{V(H)}$, then $m \in U$ and by Proposition \ref{prop:llcorner-give-all-edges},
\[
m = \sum_{\mb{s} \in \mc{S}} (-1)^{q_1(\mb{r}, \mb{s})} \bigwedge_{i\in[d]} \sum_{W_i \in \binom{V(H) \cap N_i}{s_i}} \lambda_{W_i} \cdot e_{W_i \times \{i\}} = \sum_{S \in E(H)} \mu_S \cdot e_S,
\]
where each $\mu_S$ is a nonzero constant. Thus, $\supp(m) = E(H)$ and the conditions of the Lemma \ref{lemma:lower-bound-wdir-via-kernel} are satisfied.

It remains to determine $\dim U$.

By Proposition \ref{prop:e_in_terms_of_generic},
\[
U \subseteq \spann \{g_{\mb{r}} \llcorner f_R \mid \mb{r}\in\mc{R},\ R \subseteq N, \text{ such that}\ |R \cap N_i| = r_i \text{ for all } i\in[d]\}.
\]

For each such $R$ and every $i\in [d]$, there exists a minimal $m_i \le r_i$ such that $(m_i,i) \notin R \cap N_i$ and $([r_i] \setminus [m_i])\times\fbr{i} \subseteq R \cap N_i$. Hence,
\begin{multline*}
U  \subseteq \spann \bigcup_{\mb{m} \in \Nz^d} \big\{g_{\mb{r}} \llcorner f_R\ \big|\ \mb{r}\in\mc{R},\ R \subseteq N, \text{ such that }\\ |R \cap N_i| = r_i,\ ([r_i] \setminus [m_i])\times\fbr{i} \subseteq R \cap N_i,\ (m_i,i) \notin R \cap N_i \text{ for all } i\in[d]\}.
\end{multline*}

If for some $\mb{s} \in \mc{S}$ and $i \in [d]$, $m_i > s_i$, then $([r_i] \setminus [s_i])\times\fbr{i} \not\subseteq R\cap N_i$ and
\[
\bigwedge_{i\in[d]} f_{([r_i] \setminus [s_i]) \times \{i\}} \llcorner f_R = 0.
\]
Thus, we can restrict to $\mb{m} \in \downcl{\mc{S}}$, and we get:
\begin{multline*}
    \dim U \le \sum_{\mb{m}\in \downcl{\mc{S}}}\rk \bigcup_{\mb{r}\in\mc{R}}\Bigg\{\sum_{\substack{\mb{s}\in\mc{S}\\\forall i\in [d]\ m_i\le s_i}} (-1)^{q_1(\mb{r},\mb{s})}\bigwedge_{i\in[d]} f_{([r_i]\setminus[s_i])\times\fbr{i}}\ \llcorner\ \bigwedge_{i\in[d]} f_{(T_i\disjointcup ([r_i]\setminus[m_i]))\times\fbr{i}}\ \Bigg|\\
    \forall i\in[d]\ T_i\in \binom{[m_i-1]\disjointcup ([n_i]\setminus[r_i])}{m_i}\Bigg\}.
    \end{multline*}

By Proposition \ref{prop:llcorner-sign-decompose},
\begin{multline*}
    \dim U \le \sum_{\mb{m} \in \downcl{\mc{S}}} \rk \bigcup_{\mb{r} \in \mc{R}} \Bigg\{\sum_{\substack{\mb{s} \in \mc{S} \\ \forall i \in [d],\ m_i \le s_i}}
    (-1)^{2q_1(\mb{r}, \mb{s})+ q_2(\mb{r}, T) + q_3(\mb{s}, T)} \bigwedge_{i\in[d]} f_{(T_i \disjointcup ([s_i]\setminus[m_i])) \times \{i\}} \ \Bigg| \\
    \forall i \in [d]\ T_i \in \binom{[m_i - 1] \disjointcup ([n_i] \setminus [r_i])}{m_i}
    \Bigg\}.
\end{multline*}

Multiplying each expression for fixed $T$ by $(-1)^{q_2(\mb{r}, T)}$ (which does not change the rank), we obtain:
\begin{multline*}
\dim U \le \sum_{\mb{m} \in \downcl{\mc{S}}} \rk \bigcup_{\mb{r} \in \mc{R}} \Bigg\{
\sum_{\substack{\mb{s} \in \mc{S} \\ \forall i \in [d],\ m_i \le s_i}}
(-1)^{q_3(\mb{s}, T)} \bigwedge_{i\in[d]} f_{(T_i \disjointcup ([s_i]\setminus[m_i])) \times \{i\}} \ \Bigg| \\
\forall i \in [d]\ T_i \in \binom{[m_i - 1] \disjointcup ([n_i] \setminus [r_i])}{m_i}
\Bigg\}.
\end{multline*}

If, for a fixed $T$, there are several vectors $\mb{r}\in\mc{R}$ such that
\[
\forall i\in[d] \quad T_i \in \binom{[m_i-1]\disjointcup ([n_i]\setminus [r_i])}{m_i},
\]
then the resulting expression is the same for every such choice of $\mb{r}$. This is precisely why we need the detailed formula in Proposition~\ref{prop:llcorner-sign-decompose}: it allows us to eliminate the dependence on the choice of $\mb{r}$ for a given $T$. Thus,
\begin{equation*}
\dim U \le \sum_{\mb{m} \in \downcl{\mc{S}}} \left| \left\{
T \in \timesprod_{i \in [d]} \binom{[n_i]}{m_i} \ \middle| \
\exists \mb{r} \in \mc{R}\ \forall i \in [d]\ T_i \in \binom{[m_i - 1] \disjointcup ([n_i] \setminus [r_i])}{m_i}
\right\} \right|.
\end{equation*}

By the inclusion-exclusion principle, we obtain $\dim U \le q(\mb{n}, \mc{S}, \mc{R})$.

\end{proof}

\begin{remark*}
In fact, it can be shown that $\dim U = q(\mb{n}, \mc{S}, \mc{R})$, but we do not need this result in what follows.
\end{remark*}

\section{Upper Bound in Theorem \ref{thrm:main-theorem}}\label{sec:upper-bound-main-thrm}

\subsection{Case: $\mc{R}$ has a minimum element}\label{subsec:upper-bound-smallest-r}

\begin{proof}
We assume $\mc{R}(\mb{n}) \neq \emptyset$. Suppose $\tilde{\mb{r}} \in \mc{R}$ is such that for all $\mb{r} \in \mc{R}$ and all $i \in [d]$ we have $r_i \ge \tilde{r}_i$. Then $\tilde{\mb{r}} \in \mc{R}(\mb{n})$ and
\[
q(\mb{n}, \mc{S}, \mc{R}) = \sum_{\mb{m} \in \downcl \mc{S}} \ \prod_{\substack{i = 1 \\ m_i \neq 0}}^{d} \binom{m_i - 1 + n_i - \tilde{r}_i}{m_i}.
\]

For each $\mb{m} \in \downcl \mc{S}$ choose an arbitrary vector $\mb{s}_{\mb{m}} \in \mc{S}$ such that $s_{\mb{m},i} \ge m_i$ for all $i\in[d]$. Define
\[F=\tprod{\mc{S}}{\mb{n}}\ \Big\backslash\ \bigdisjointcup_{\mb{m}\in\downcl{\mc{S}}}\fbr{\bigdisjointcup_{i\in[d]}\ (T_i\disjointcup [s_{\mb{m},i} - m_i])\times \fbr{i}\ \middle|\ \forall i\in[d]\ T_i\in\binom{[n_i]\setminus[\tilde{r}_i-m_i+1]}{m_i}}.\]

We never remove the same edge twice, because from a removed edge $W$ one can uniquely recover the tuple $(\mb{m}, \fbr{T_{W,i}})$ as follows: in each part $W_i\subseteq [n_i]$ find the maximal prefix $[l_i]\times\{i\} \subseteq W_i$, then set $T_{W,i} = W_i \setminus [l_i]$ and $m_i = |T_{W,i}|$. Hence the size of the hypergraph $F$ matches the desired bound in the theorem.

We associate to each removed edge $W$ the tuple of tuples $T_W = (T_{W,1}, \dots, T_{W,d})$. Within each $T_{W,i}$ we order elements in decreasing order. We process the edges $W$ in lexicographic increasing order of these tuples; that is, $T_{W'} < T_W$ if there exists $i$ such that $T_{W',i} < T_{W,i}$ and for all $j < i$, $T_{W',j} = T_{W,j}$.

Suppose at the current step we are adding edge $W$, and let $m_i = |T_{W,i}|$. Construct a copy $H$ of the hypergraph $\tprod{\mc{S}}{\tilde{\mb{r}}}$ on vertex set $R = \bigdisjointcup_{i\in[d]} R_i \times \{i\}$, where $R_i = [\tilde{r}_i - m_i] \disjointcup T_{W,i}$. Suppose that another edge $W' \neq W$ of $H$ is missing. Let $m'_i = |T_{W',i}|$. Since $T_{W'} > T_W$, there is some $i$ with $T_{W',i} > T_{W,i}$. Then $T_{W',i}$ must completely contain $T_{W,i}$ (because the elements in each $T_{W,i}$ are sorted descending and each is larger than any in $R_i \setminus T_{W,i}$). Hence $m'_i > m_i$, and $T_{W',i}$ contains a vertex from $[\tilde{r}_i - m_i - (m'_i - m_i - 1)]$ that contradicts $T_{W',i} \subseteq [n_i] \setminus [\tilde{r}_i - m'_i + 1]$. Thus all edges in $E(H)$ except $W$ are already added, and the addition of $W$ completes a new colored copy of $\tprod{\mc{S}}{\tilde{\mb{r}}}$.
\end{proof}

\subsection{Case: $\mc{S}$ has a maximum element}\label{subsec:upper-bound-largest-s}

\begin{proof}
Suppose $\tilde{\mb{s}} \in \mc{S}$ is such that for all $\mb{s} \in \mc{S}$ and all $i$ we have $s_i \le \tilde{s}_i$. Then $\downcl \mc{S} = \timesprod_{i\in[d]} \bigl( \{0\} \disjointcup [\tilde{s}_i] \bigr)$ and hence
\begin{align*}
q(\mb{n}, \mc{S}, \mc{R}) &= \sum_{\emptyset \neq \mc{Q} \subseteq \mc{R}(\mb{n})} (-1)^{|\mc{Q}|+1} \prod_{i\in [d]} \left(1 + \sum_{m_i = 1}^{\tilde{s}_i} \binom{m_i - 1 + n_i - \max_{\mb{r} \in \mc{Q}} r_i}{m_i} \right) \\
&= \sum_{\emptyset \neq \mc{Q} \subseteq \mc{R}(\mb{n})} (-1)^{|\mc{Q}|+1} \prod_{i\in [d]} \binom{n_i - \max_{\mb{r} \in \mc{Q}} r_i + \tilde{s}_i}{\tilde{s}_i}.
\end{align*}

Define

\[F=\tprod{\mc{S}}{\mb{n}}\ \Big\backslash\ \bigcup_{\mb{r} \in \mc{R}(\mb{n})}\fbr{\bigdisjointcup_{i\in[d]}\ W_i\times \fbr{i}\ \middle|\ \forall i\in[d]\ W_i\in\binom{[n_i] \setminus [r_i - \tilde{s}_i]}{\tilde{s}_i}}.\]

The size of $F$ matches the desired bound in the theorem by the inclusion--exclusion principle.

For each missing edge $W = \bigdisjointcup_{i\in[d]}\ W_i\times \fbr{i}$, order each part $W_i$ in decreasing order and associate to $W$ a tuple $(W_1, \ldots, W_d)$. We process the edges $W$ in lexicographic increasing order of these tuples.

Suppose at the current step we are adding edge $W$. By definition of $F$, there exists $\mb{r} \in \mc{R}(\mb{n})$ such that $W_i \in \binom{[n_i] \setminus [r_i - \tilde{s}_i]}{\tilde{s}_i}$ for all $i$. Construct a copy $H$ of $\tprod{\mc{S}}{\mb{r}}$ on $R = \bigdisjointcup_{i\in[d]} R_i$, where $R_i = ([r_i - \tilde{s}_i] \disjointcup W_i) \times \{i\}$. Suppose that another edge $W' \neq W$ of $H$ is missing. By construction of $F$, $|W'_i| = \tilde{s}_i$ for all $i$, so $W'_i \le W_i$, contradicting the lexicographic order in which we add missing edges. Thus all edges in $E(H)$ except $W$ are already added, and the addition of $W$ completes a new colored copy of $\tprod{\mc{S}}{\mb{r}}$.
\end{proof}

\section{Corollaries for uncolored weak saturation}\label{sec:wsat-eq-wdir}

To reduce uncolored weak saturation to colored weak saturation, we represent an uncolored copy of the hypergraph $\tprod{\mc{S}}{\mb{r}}$ as a colored copy of a hypergraph from a suitable family $\mc{H}$. However, in such an uncolored copy of $\tprod{\mc{S}}{\mb{r}}$, several parts may be mapped to a single part of the host hypergraph. To handle this situation, we introduce the following definitions.

\begin{definition}
For a function $f: [d] \to [d]$ and integer vector $\mb{v} \in \Nz^d$, define the convolution $\conv{\mb{v}}{f} \in \Nz^d$ by
\[
\forall i \in [d]\quad(\conv{\mb{v}}{f})_i = \sum_{j \in f^{-1}(i)} v_j.
\]

For a family $\mc{R} \subseteq \Nz^d$ and a family $\mc{F}$ of functions from $[d]$ to $[d]$, set
\[
\fconv{\mc{R}}{\mc{F}} = \{\conv{\mb{r}}{f} \mid \mb{r} \in \mc{R}, f \in \mc{F}\}.
\]
\end{definition}

\begin{remark*}
    If $f$ is a permutation, then $\conv{\mb{v}}{f} = (v_{f^{-1}(1)}, \ldots, v_{f^{-1}(d)})$.
\end{remark*}

\begin{definition}
Let $\mc{S} \subseteq \Nz^d$ be a non-empty finite family of integer vectors. Define the convolution-compatible functions with respect to $\mc{S}$ by
\[
\Conv(\mc{S}) = \{ f: [d] \to [d] \mid \forall \mb{s} \in \mc{S},\ \conv{\mb{s}}{f} \in \mc{S} \}.
\]
\end{definition}


\begin{proposition}\label{prop:wsat-upper-via-conv-wdir}
Let $d$, $\mb{n}$, $\mc{S}$, and $\mc{R}$ be as in Theorem \ref{thrm:main-theorem}. Then
\[
\wsat(\tprod{\mc{S}}{\mb{n}},\ \tprod{\mc{S}}{\mc{R}}) \le \wdir\bigl(\tprod{\mc{S}}{\mb{n}},\, \tprod{\mc{S}}{\fconv{\mc{R}}{\Conv(\mc{S}) \cap S_d}}\bigr),
\]
where $S_d$ denotes the set of all permutations of $[d]$.
\end{proposition}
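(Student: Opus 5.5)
We will show that any colored spanning subhypergraph $(F,c)$ that is weakly $\tprod{\mc{S}}{\fconv{\mc{R}}{\Conv(\mc{S}) \cap S_d}}$-saturated in $\tprod{\mc{S}}{\mb{n}}$ is, once the colors are forgotten, weakly $\tprod{\mc{S}}{\mc{R}}$-saturated in the uncolored hypergraph $\tprod{\mc{S}}{\mb{n}}$. Taking $F$ of minimum size then gives the inequality. Since both processes add edges one at a time from the same starting hypergraph, it suffices to take the same ordering $e_1,\dots,e_k$ of missing edges. We must check that whenever $e_i$ lies in a colored copy $\wt{H}$ of $\tprod{\mc{S}}{\conv{\mb{r}}{\pi}}$ (with $\mb{r}\in\mc{R}$, $\pi\in\Conv(\mc{S})\cap S_d$) inside $F\cup\fbr{e_1,\ldots,e_i}$, the same $\wt{H}$ is an uncolored copy of some member of $\tprod{\mc{S}}{\mc{R}}$, namely $\tprod{\mc{S}}{\mb{r}}$.

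The key step is an isomorphism $\tprod{\mc{S}}{\conv{\mb{r}}{\pi}}\cong\tprod{\mc{S}}{\mb{r}}$ as uncolored hypergraphs whenever $\pi$ is a permutation with $\conv{\mb{s}}{\pi}\in\mc{S}$ for all $\mb{s}\in\mc{S}$. By the remark after the definition of convolution, $(\conv{\mb{r}}{\pi})_i=r_{\pi^{-1}(i)}$. Define the bijection $\phi$ on vertices by $\phi(a,j)=(a,\pi(j))$, mapping $[r_j]\times\fbr{j}$ onto $[r_j]\times\fbr{\pi(j)}=[(\conv{\mb{r}}{\pi})_{\pi(j)}]\times\fbr{\pi(j)}$. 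An edge $e$ of $\tprod{\mc{S}}{\mb{r}}$ has intersection pattern $(|e\cap([r_j]\times\fbr{j})|)_j=\mb{s}$ for some $\mb{s}\in\mc{S}$. Then $\phi(e)$ has pattern $\conv{\mb{s}}{\pi}$, which lies in $\mc{S}$ by the choice of $\pi$, so $\phi(e)$ is an edge of $\tprod{\mc{S}}{\conv{\mb{r}}{\pi}}$.

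For the converse direction we need $\phi^{-1}$ to preserve edges as well, that is, $\conv{\mb{s}}{\pi^{-1}}\in\mc{S}$ for every $\mb{s}\in\mc{S}$. This is the only mildly delicate point. The map $\mb{s}\mapsto\conv{\mb{s}}{\pi}$ is injective on $\Nz^d$, since it permutes coordinates, and it sends the finite set $\mc{S}$ into itself. Hence it is a bijection of $\mc{S}$, and its inverse $\mb{s}\mapsto\conv{\mb{s}}{\pi^{-1}}$ also maps $\mc{S}$ to $\mc{S}$. So $\phi$ is a hypergraph isomorphism.

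Consequently, any colored copy $\wt{H}$ of $\tprod{\mc{S}}{\conv{\mb{r}}{\pi}}$ is, after composing the bijection witnessing the colored copy with $\phi^{-1}$, an uncolored copy of $\tprod{\mc{S}}{\mb{r}}\in\tprod{\mc{S}}{\mc{R}}$ containing $e_i$. The same ordering therefore witnesses that $F$ is weakly $\tprod{\mc{S}}{\mc{R}}$-saturated, so $\wsat\le|E(F)|$, which proves the proposition. No use of Theorem~\ref{thrm:main-theorem} is needed; the hypotheses are only there so that the right-hand side is defined.
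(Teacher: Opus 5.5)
Your proposal is correct and takes essentially the same route as the paper. The paper relabels the parts of a colored copy $H$ of $\tprod{\mc{S}}{\conv{\mb{r}}{f}}$ as $\wt{R}_i = R_{f(i)}$, which is your isomorphism $\phi$ transported to the copy, and it likewise uses that $\mb{s}\mapsto\conv{\mb{s}}{f}$ is injective on the finite set $\mc{S}$ and maps it into itself, hence permutes $\mc{S}$, to obtain both inclusions of edge sets (with reuse of the same edge ordering being the implicit first step).
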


\begin{proof}
Let $H$ be an arbitrary colored copy of the hypergraph $\tprod{\mc{S}}{\conv{\mb{r}}{f}}$ in $\tprod{\mc{S}}{\mb{n}}$, for some $\mb{r} \in \mc{R}$ and $f \in \Conv(\mc{S}) \cap S_d$. It suffices to show that $H$ is also a uncolored copy of $\tprod{\mc{S}}{\mb{r}}$.

Let $V(H) = R = \bigdisjointcup_{i \in [d]} R_i$. We will show that $H$ is equal to the copy $\wt{H}$ of $\tprod{\mc{S}}{\mb{r}}$ with parts $\wt{R}_i = R_{f(i)}$. This choice of parts is possible because $|\wt{R}_i| = r_i$ for every $i\in[d]$.

Take any edge $W \in E(H)$. Then for some $\mb{s} \in \mc{S}$ we have $\forall i\in[d],\ |W \cap R_i| = s_i$. Hence
\[
\forall i\in[d]\quad |W \cap \wt{R}_i| = s_{f(i)} = (\conv{\mb{s}}{f^{-1}})_i.
\]
Since $f\in S_d$, for distinct $\mb{s}, \tilde{\mb{s}} \in \mc{S}$, $\conv{\mb{s}}{f} \neq \conv{\tilde{\mb{s}}}{f}$. Since $\mc{S}$ is finite and $f\in \Conv(\mc{S})$, for each $\mb{s} \in \mc{S}$ there exists $\tilde{\mb{s}} \in \mc{S}$ such that $\conv{\tilde{\mb{s}}}{f} = \mb{s}$. Thus $\forall i\in[d],\ |W \cap \wt{R}_i| = \tilde{s}_i$. So $E(H)\subseteq E(\wt{H})$.

Conversely, take an edge $W \in E(\wt{H})$. Then for some $\mb{s} \in \mc{S}$ we have $\forall i\in[d],\ |W \cap \wt{R}_i| = s_i$. That implies $\forall i\in[d],\ |W \cap R_i| = s_{f^{-1}(i)} = (\conv{\mb{s}}{f})_i$, and since $\conv{\mb{s}}{f} \in \mc{S}$, $W \in E(H)$. Thus $E(H) = E(\wt{H})$.
\end{proof}

The following theorem provides a broad range of cases in which uncolored weak saturation can be reduced to colored weak saturation.
\begin{theorem}\label{thrm:wsat-equal-wdir-general}
Let $d$, $\mb{n}$, $\mc{S}$, and $\mc{R}$ be as in Theorem \ref{thrm:main-theorem}. Assume:
\begin{enumerate}
    \item \label{item2:wsat-to-cwsat} $\Conv(\mc{S}) \subseteq S_d$.
    \item \label{item3:wsat-to-cwsat} $\forall i \in [d], \forall \mb{s}, \mb{s}' \in \mc{S}: |s_i - s'_i| \neq 1$.
    \item \label{item4:wsat-to-cwsat} $\forall i \in [d], \forall \mb{r} \in \mc{R}, \exists \mb{s} \in \mc{S}:\ s_i \neq 0 \text{ and } r_i \ge s_i + 1$.
\end{enumerate}
Then
\[
\wsat(\tprod{\mc{S}}{\mb{n}}, \tprod{\mc{S}}{\mc{R}}) = \wdir\bigl(\tprod{\mc{S}}{\mb{n}},\, \tprod{\mc{S}}{\fconv{\mc{R}}{\Conv(\mc{S})}}\bigr).
\]
\end{theorem}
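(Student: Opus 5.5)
The equality will follow from two inequalities. The upper bound $\le$ is immediate from Proposition~\ref{prop:wsat-upper-via-conv-wdir}, because assumption~\ref{item2:wsat-to-cwsat} gives $\Conv(\mc{S}) \cap S_d = \Conv(\mc{S})$. All the work is therefore in the lower bound. I will show that every uncolored copy $\wt{H}$ of some $\tprod{\mc{S}}{\mb{r}}$, $\mb{r}\in\mc{R}$, inside $\tprod{\mc{S}}{\mb{n}}$ is in fact a colored copy of $\tprod{\mc{S}}{\conv{\mb{r}}{f}}$ for some $f\in\Conv(\mc{S})$. Granting this, every $\tprod{\mc{S}}{\mc{R}}$-bootstrap process is also a colored $\tprod{\mc{S}}{\fconv{\mc{R}}{\Conv(\mc{S})}}$-process. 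Hence every weakly saturated $F$ in the uncolored sense is weakly saturated in the colored sense, which gives $\ge$.

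To prove the claim, fix an uncolored copy $\wt{H}$ with vertex bijection $\phi$ to the pattern, whose parts are $P_1,\dots,P_d$ with $|P_j|=r_j$. Let $Q_j=\phi^{-1}(P_j)$.

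\textbf{Step 1: each $Q_j$ is monochromatic.} I want to show that each $Q_j$ lies inside a single color class $N_i$ of the host, and then define $f(j)=i$. Suppose instead that $u,v\in Q_j$ have different host colors. By assumption~\ref{item4:wsat-to-cwsat} there is $\mb{s}\in\mc{S}$ with $s_j\ge1$ and $r_j\ge s_j+1$. So there is a pattern edge $W$ containing $u$ but not $v$, with $|W\cap Q_j|=s_j$. Swapping $u$ for $v$ gives $W'$, which is again a pattern edge, since the part-profile is unchanged. In the host, however, the profile changes by $-1$ in color $c(u)$ and by $+1$ in color $c(v)$. Both $W$ and $W'$ are host edges, so both host profiles lie in $\mc{S}$. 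They differ by exactly $1$ in coordinate $c(u)$, which contradicts assumption~\ref{item3:wsat-to-cwsat}.

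\textbf{Step 2: $f\in\Conv(\mc{S})$.} Given Step 1, a pattern edge with profile $\mb{s}$ has host profile $\conv{\mb{s}}{f}$. Every pattern edge is a host edge, so $\conv{\mb{s}}{f}\in\mc{S}$ for all $\mb{s}\in\mc{S}$, that is, $f\in\Conv(\mc{S})$. By assumption~\ref{item2:wsat-to-cwsat}, $f$ is then a permutation.

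\textbf{Step 3: matching edge sets.} It remains to check that $E(\wt{H})$ is exactly the edge set of the colored copy of $\tprod{\mc{S}}{\conv{\mb{r}}{f}}$ on the host parts $Q_j$ with $Q_j\subseteq N_{f(j)}$. Because $f$ is a permutation, the $Q_j$ lie in distinct colors and $|Q_j|=r_j$, which gives the part sizes $\conv{\mb{r}}{f}$. For the edges I reuse the argument of Proposition~\ref{prop:wsat-upper-via-conv-wdir}. The map $\mb{s}\mapsto\conv{\mb{s}}{f}$ is injective on the finite set $\mc{S}$ and sends $\mc{S}$ into itself, so it is a bijection of $\mc{S}$. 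Consequently, a vertex subset of $\bigcup_j Q_j$ has pattern profile in $\mc{S}$ if and only if its host profile is in $\mc{S}$.

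The main obstacle is Step 1. The delicate point is making sure that the swap $u\leftrightarrow v$ produces two host edges whose profiles differ by exactly one in a single coordinate; this is where assumptions~\ref{item3:wsat-to-cwsat} and~\ref{item4:wsat-to-cwsat} are used. I will also check the degenerate cases, such as $r_j=0$ or coordinates where every $s_j=0$. Assumption~\ref{item4:wsat-to-cwsat} rules these out for every $j$, so every vertex of the copy is covered by Step 1.
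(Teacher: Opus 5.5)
Your proposal is correct and follows essentially the same route as the paper: the $u\leftrightarrow v$ swap argument using assumptions~\ref{item3:wsat-to-cwsat} and~\ref{item4:wsat-to-cwsat} shows each part of the copy is monochromatic, the induced map then lies in $\Conv(\mc{S})$ and is therefore a permutation, and the upper bound comes from Proposition~\ref{prop:wsat-upper-via-conv-wdir}. Your Step~3 only spells out the edge-set matching, via the bijection $\mb{s}\mapsto\conv{\mb{s}}{f}$ on $\mc{S}$, which the paper leaves implicit in its final sentence.
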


\begin{proof}

Let $\mb{r} \in \mc{R}$ and suppose there is a uncolored copy $H$ of $\tprod{\mc{S}}{\mb{r}}$ in $\tprod{\mc{S}}{\mb{n}}$. We will show that it is a colored copy of $\tprod{\mc{S}}{\conv{\mb{r}}{f}}$ for some $f \in \Conv(\mc{S})$.

Let $V(H) = R = \bigdisjointcup_{i \in [d]} R_i$, where $\fbr{R_i}_{i\in[d]}$ are the preimages of parts in $\tprod{\mc{S}}{\mb{r}}$. Suppose that for some $i, j \in [d]$, $R_i \cap N_j \neq \emptyset$. Then $R_i \subseteq N_j$, otherwise pick $x, y \in R_i$ with $x \in N_j$ and $y \notin N_j$. By condition \ref{item4:wsat-to-cwsat}, there exists some $\mb{s} \in \mc{S}$ with $r_i > s_i \ge 1$. Therefore, we can choose an edge $W \in E(H)$ such that $x \in W_i$ and $y \notin W_i$. Let $W' = (W \setminus \{x\}) \cup \{y\}$. Then $W' \in E(H)$, but $|W \cap N_j| - |W' \cap N_j| = 1$, contradicting \ref{item3:wsat-to-cwsat}.

Hence there is a function $f: [d] \to [d]$ such that $\forall i\in[d]$ we have $R_i \subseteq N_{f(i)}$. If $f \notin \Conv(\mc{S})$, then $\exists \mb{s} \in \mc{S}$ such that $\conv{\mb{s}}{f} \notin \mc{S}$. Choose $W \in E(H)$ with $|W_i| = s_i$. Then $\forall i\in[d],\ |W \cap N_i| = (\conv{\mb{s}}{f})_i$ that contradicts $\conv{\mb{s}}{f} \notin \mc{S}$. Thus $f \in \Conv(\mc{S}) \subseteq S_d$, and $H$ is a colored‑copy of $\tprod{\mc{S}}{\conv{\mb{r}}{f}} \in \tprod{\mc{S}}{\fconv{\mc{R}}{\Conv(\mc{S})}}$.

Therefore $\wsat(\tprod{\mc{S}}{\mb{n}}, \tprod{\mc{S}}{\mc{R}}) \ge \wdir(\tprod{\mc{S}}{\mb{n}},\, \tprod{\mc{S}}{\fconv{\mc{R}}{\Conv(\mc{S})}})$. Requirement \ref{item2:wsat-to-cwsat} and Proposition \ref{prop:wsat-upper-via-conv-wdir} yield the equality.

\end{proof}

The requirement \ref{item4:wsat-to-cwsat} in Theorem \ref{thrm:wsat-equal-wdir-general} cannot be removed in general, as the following example illustrates.

\begin{example}
Let $d = 2$, $\mb{s} = (2,1)$, $\mb{r} = (2,2)$. For every $\mb{n} \in \Nz^2$ with $n_1 \ge 2, n_2 \ge 1$, one computes $q(\mb{n}, \{ \mb{s} \}, \{\mb{r}\}) = \binom{n_1}{2} (n_2 - 1)$, and by Theorem \ref{thrm:main-theorem},
\[
\wdir(\tprod{\mb{s}}{\mb{n}}, \tprod{\mb{s}}{\mb{r}}) = \binom{n_1}{2}.
\]
However, one can show
\[
\wsat(\tprod{\mb{s}}{\mb{n}}, \tprod{\mb{s}}{\mb{r}}) \le n_2
\]
by using a colored copies of $2$-colored hypergraph $H$ with parts $R_1 = \{1,2,3\}, R_2 = \{4\}$ and edges $E(H) = \{\{4,1,2\}, \{4,1,3\}\}$. The hypergraph $H$ is isomorphic to $\tprod{\mb{s}}{\mb{r}}$ as uncolored hypergraph, and thus usable in uncolored weak saturation. This shows that for $n_1 = n_2 \ge 4$,
\[
\wsat(\tprod{\mb{s}}{\mb{n}}, \tprod{\mb{s}}{\mb{r}}) < \wdir(\tprod{\mb{s}}{\mb{n}}, \tprod{\mb{s}}{\mb{r}}).
\]
\end{example}

However, the assumption \ref{item4:wsat-to-cwsat} may be dropped in the following special case.

\begin{theorem}\label{thrm:wsat-eq-wdir-constant-single-s}
Let $d \ge 1$ and $s\in \Nz$ be integers, let $\mb{n} \in \Nz^d$ be an integer vector, and let $\mc{R} \subseteq \Nz^d$ be a non-empty finite family of integer vectors. Suppose that for all $i \in [d]$ and $\mb{r}\in \mc{R}$ we have $n_i \ge s$ and $r_i \ge s$. Define $\mb{s} \in \Nz^d$ by $s_i = s$ for all $i \in [d]$. Then the following hold:
\begin{itemize}
  \item If $s \neq 0$, then $\Conv(\{\mb{s}\}) = S_d$ and
  \[
  \wsat(\tprod{\mb{s}}{\mb{n}}, \tprod{\mb{s}}{\mc{R}}) = \wdir(\tprod{\mb{s}}{\mb{n}}, \tprod{\mb{s}}{\fconv{\mc{R}}{S_d}}).
  \]
  \item If $s = 0$, then
  \[
  \wsat(\tprod{\mb{s}}{\mb{n}}, \tprod{\mb{s}}{\mc{R}}) =
  \begin{cases}
    0, & \text{if } \exists \mb{r} \in \mc{R},\ \sum_{i\in[d]} n_i \ge \sum_{i\in[d]} r_i;\\
    1, & \text{otherwise}.
  \end{cases}
  \]
\end{itemize}
\end{theorem}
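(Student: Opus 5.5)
For $s\ne 0$ we first compute $\Conv(\{\mb{s}\})$. For any $f:[d]\to[d]$ we have $(\conv{\mb{s}}{f})_i = s\cdot|f^{-1}(i)|$. This equals $s$ for every $i$ exactly when each fibre has size one, i.e.\ when $f$ is a bijection, so $\Conv(\{\mb{s}\})=S_d$. The inequality $\le$ then follows directly from Proposition~\ref{prop:wsat-upper-via-conv-wdir}, because $\Conv(\{\mb{s}\})\cap S_d = S_d$.

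For $\ge$ we mimic the proof of Theorem~\ref{thrm:wsat-equal-wdir-general}. Let $H$ be an uncolored copy of $\tprod{\mb{s}}{\mb{r}}$ in $\tprod{\mb{s}}{\mb{n}}$ with parts $R_1,\dots,R_d$, where $|R_i|=r_i$; we want $R_i\subseteq N_{f(i)}$ for some $f$. Suppose $R_i$ meets two host parts $N_j$ and $N_k$. If $r_i\ge s+1$, the swap argument applies verbatim: pick an edge $W$ with $x\in W$, $y\notin W$ for $x\in R_i\cap N_j$, $y\in R_i\setminus N_j$, and replace $x$ by $y$. Then $|W'\cap N_j|=s-1$, contradicting that every host edge has exactly $s$ vertices in each part. (Condition~\ref{item3:wsat-to-cwsat} holds trivially for a single $\mb{s}$.)

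The main obstacle is the case $r_i=s$, where no such swap exists. In that case every edge of $H$ contains all of $R_i$. The plan is to count instead. Each edge $W$ of $H$ satisfies $|W\cap N_j|=s$ for every $j$, and $W$ is a union of $s$-subsets of the $R_l$, with $R_i$ fully included whenever $r_i=s$. Every part $R_l$ with $r_l>s$ lies inside a single host part by the swap argument, say $N_{g(l)}$. If $R_i$ with $r_i=s$ splits nontrivially among host parts, then some host part receives a contribution strictly between $0$ and $s$ from $R_i$. It must be topped up to exactly $s$ by the other parts, and those contributions can be varied, e.g.\ by choosing a different $s$-subset of a part $R_l$ with $r_l>s$ lying in the same $N_j$. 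If there is no varying part, the total sizes still force contradictions.

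I expect this counting to be delicate, so there is a fallback. Since every edge contains the fixed set $R_i$, $H$ is isomorphic to a hypergraph on the remaining parts, with $R_i$ acting as a constant core. We then re-partition: group the vertices of $\bigcup_{l:\,r_l=s}R_l$ by host part, and note that the edges are unchanged if the parts with $r_l=s$ are regrouped into any partition whose blocks each have size $s$ and lie in one host part. Sum over host parts $N_j$: the number of vertices of $H$ in $N_j$ that lie in tight parts, plus $s$ times the number of non-tight parts in $N_j$, equals $s$. So each host part receives either exactly one non-tight part or exactly $s$ tight vertices. Regrouping those $s$ tight vertices as a single part yields a relabelled copy in which every part sits inside one host part bijectively, i.e.\ a colored copy of $\tprod{\mb{s}}{\conv{\mb{r}}{f}}$ with $f\in S_d$, since the multiset of part sizes is preserved up to the tight parts, which all have size $s$. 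This gives $\ge$ via the same ordering argument as in Theorem~\ref{thrm:wsat-equal-wdir-general}.

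For $s=0$, every hypergraph $\tprod{\mb{s}}{\mb{r}}$ has exactly one edge, the empty set, and the host has only the edge $\emptyset$. So the answer is $0$ iff the empty edge can be added by creating a copy of some $\tprod{\mb{s}}{\mb{r}}$. That requires only $\sum_i r_i$ vertices with no color constraint, which exist iff $\sum n_i\ge\sum r_i$ for some $\mb{r}\in\mc{R}$. Otherwise the empty edge must be in $F$, giving $1$.
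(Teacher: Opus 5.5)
Your proof is correct and, via the fallback, is essentially the paper's argument: the swap argument puts each part with $r_i\ge s+1$ inside a single host part, and counting any edge $W$ host part by host part shows that each remaining host part contains exactly $s$ vertices of tight parts, so $f$ extends to a permutation (the paper does this tacitly, while your regrouping of tight vertices into size-$s$ blocks makes explicit why the edge set is unchanged). Your first ``counting to a contradiction'' idea for tight parts cannot work, because a part with $r_i=s$ can genuinely straddle several host parts, so the regrouping step is the one actually needed.
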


\begin{proof}
The case $s = 0$ is immediate from the fact that the condition $\exists \mb{r}\in \mc{R}\ \sum_{i\in[d]} n_i \ge \sum_{i\in[d]}r_i$ is equivalent to the existence of a copy of some hypergraph from $\tprod{\mb{s}}{\mc{R}}$ inside $\tprod{\mb{s}}{\mb{n}}$.

Now assume $s \ne 0$. Let $\mb{r} \in \mc{R}$, and let $H$ be a uncolored copy of $\tprod{\mb{s}}{\mb{r}}$ in $\tprod{\mb{s}}{\mb{n}}$. We will show that $H$ is a colored copy of $\tprod{\mb{s}}{\conv{\mb{r}}{f}}$ for some permutation $f : [d] \to [d]$.

Write $V(H) = R = \bigdisjointcup_{i\in[d]} R_i$, where $\fbr{R_i}_{i\in[d]}$ are the preimages of parts in $\tprod{\mb{s}}{\mb{r}}$. For every edge $W \in E(H)$ we have
\begin{equation}\label{eq:S-only-const-1}
\forall i\in[d]\quad |N_i \cap W| = s.
\end{equation}

Let $A = \{ i \in [d] \mid r_i \ge s + 1\}$. Suppose for some $i \in A$ and $j \in [d]$ we have $R_i \cap N_j \neq \emptyset$. Then $R_i \subseteq N_j$, otherwise pick $x,y \in R_i$ with $x \in N_j$ and $y \notin N_j$. Choose an edge $W \in E(H)$ with $x \in W_i$, $y \notin W_i$. Then $W' = (W \setminus \{x\}) \cup \{y\}$ would also be an edge of $H$, but then $|W \cap N_j| \neq |W' \cap N_j|$, contradicting \eqref{eq:S-only-const-1}.

Therefore we have an injective function $f : A \to [d]$ with $ R_i \subseteq N_{f(i)}$ for all $i\in A$. Because \eqref{eq:S-only-const-1} holds, we also have
\[
\bigcup_{i \in [d] \setminus A} R_i \subseteq \bigcup_{i \in [d] \setminus f(A)} (R \cap N_i),
\]
and since $|R| = \sum_{i\in[d]} |R \cap N_i|$ and $f$ is injective, one deduces that
\[\forall i \in [d] \setminus f(A)\quad |R \cap N_i| = s.\]

We extend $f$ arbitrarily to a permutation of $[d]$. Then, for each $i\in[d]$, we have \mbox{$|R \cap N_{f(i)}| = r_i$}, and by \eqref{eq:S-only-const-1}, $H$ is a colored copy of $\tprod{\mb{s}}{\conv{\mb{r}}{f}}$
\end{proof}

Combining Theorems \ref{thrm:wsat-eq-wdir-constant-single-s} and \ref{thrm:main-theorem}, we obtain the following corollary, which establishes Theorem \ref{thrm:sym-Kn1n2-Kr1r2}.

\begin{corollary}\label{corollary:wsat-constant-single-s}
    Let $d \ge 1$ and $s\ge 1$ be integers, let $\mb{n} \in \Nz^d$ be an integer vector, and let $\mc{R} \subseteq \Nz^d$ be a non-empty finite family of integer vectors. Suppose that for all $i \in [d]$ and $\mb{r}\in \mc{R}$ we have $n_i \ge s$ and $r_i \ge s$. Define $\mb{s} \in \Nz^d$ by $s_i = s$ for all $i \in [d]$. Then
    \[
    \wsat(\tprod{\mb{s}}{\mb{n}}, \tprod{\mb{s}}{\mc{R}}) = \prod_{i\in[d]} \binom{n_i}{s} - \tilde{q}(\mb{n}, s, \mc{R}),
    \]
    where
    \[
        \tilde{q}(\mb{n}, s, \mc{R}) = \lbr{\fbr{T \in \timesprod_{i \in [d]} \binom{[n_i]}{s}\ \middle|\ \exists \mb{r}\in\mc{R}\ \exists \sigma \in S_d\ \forall i \in [d]\ T_i \in \binom{[n_i] \setminus [r_{\sigma(i)} - s]}{s}}}.
    \]
\end{corollary}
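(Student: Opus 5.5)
By Theorem~\ref{thrm:wsat-eq-wdir-constant-single-s} with $s\ge 1$, we have $\wsat(\tprod{\mb{s}}{\mb{n}}, \tprod{\mb{s}}{\mc{R}}) = \wdir(\tprod{\mb{s}}{\mb{n}}, \tprod{\mb{s}}{\mc{R}'})$, where $\mc{R}' := \fconv{\mc{R}}{S_d} = \fbr{(r_{\sigma^{-1}(1)},\ldots,r_{\sigma^{-1}(d)}) \mid \mb{r}\in\mc{R},\ \sigma\in S_d}$. So the plan is to evaluate the colored number with Theorem~\ref{thrm:main-theorem}, taking $\mc{S}=\fbr{\mb{s}}$ and the family $\mc{R}'$. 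The hypotheses of that theorem hold: $n_i\ge s$, and every coordinate of every vector in $\mc{R}'$ is some $r_j\ge s$. The singleton $\mc{S}$ trivially has a maximum element $\tilde{\mb{s}}=\mb{s}$, so the second tightness condition applies. Hence the colored number equals $\prod_{i}\binom{n_i}{s} - q(\mb{n},\fbr{\mb{s}},\mc{R}')$.

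It remains to show that $q(\mb{n},\fbr{\mb{s}},\mc{R}') = \tilde{q}(\mb{n},s,\mc{R})$. For this I use the computation from Subsection~\ref{subsec:upper-bound-largest-s}. In the case where $\mc{S}$ has a maximum element, $q$ was rewritten as an inclusion--exclusion sum over nonempty $\mc{Q}\subseteq\mc{R}'(\mb{n})$ of $\prod_i\binom{n_i-\max_{\mb{r}\in\mc{Q}} r_i+s}{s}$. For fixed $\mc{Q}$, this product counts the tuples $T\in\timesprod_i\binom{[n_i]}{s}$ with $T_i\subseteq[n_i]\setminus[\max_{\mb{r}\in\mc{Q}}r_i - s]$, which is exactly the set of $T$ satisfying the condition for every $\mb{r}\in\mc{Q}$. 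Inclusion--exclusion therefore gives $q = \lbr{\fbr{T \mid \exists \mb{r}'\in\mc{R}'(\mb{n})\ \forall i\ T_i\in\binom{[n_i]\setminus[r'_i-s]}{s}}}$. This is the formula stated in Theorem~\ref{thrm:wdir-multiply-r}, and the same inclusion--exclusion identity is what the upper-bound construction realizes.

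Two bookkeeping points then finish the proof.

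First, the restriction to $\mc{R}'(\mb{n})$ can be dropped. If $r'_i>n_i$ for some $i$, then $[n_i]\setminus[r'_i-s]$ has fewer than $s$ elements, so no $T$ qualifies for that $\mb{r}'$. Thus the existential condition over $\mc{R}'(\mb{n})$ is the same as over all of $\mc{R}'$. The edge case $r'_i-s\le 0$ causes no trouble, since then $[r'_i-s]=\emptyset$ by the paper's convention.

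Second, ranging $\mb{r}'$ over $\mc{R}'$ is the same as ranging over pairs $(\mb{r},\sigma)\in\mc{R}\times S_d$, with $r'_i=r_{\sigma^{-1}(i)}$. Reindexing $\sigma^{-1}\mapsto\sigma$ turns the condition into $T_i\in\binom{[n_i]\setminus[r_{\sigma(i)}-s]}{s}$, which is precisely the definition of $\tilde{q}(\mb{n},s,\mc{R})$.

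The only step requiring real care is verifying the inclusion--exclusion identity: the product $\prod_i\binom{n_i-\max r_i+s}{s}$ must count the intersection of the individual conditions, including the degenerate cases where $n_i - \max r_i$ is negative. I would check that the binomial convention makes both sides vanish there, which holds because $\max r_i>n_i$ leaves fewer than $s$ available elements. Everything else is a direct substitution into earlier results.
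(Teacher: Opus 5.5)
Your proposal is correct and follows the paper's own proof. You reduce to the colored number via Theorem~\ref{thrm:wsat-eq-wdir-constant-single-s}, evaluate it with Theorem~\ref{thrm:main-theorem} using the singleton (hence maximal) family $\mc{S}=\fbr{\mb{s}}$, and identify $q(\mb{n},\mb{s},\fconv{\mc{R}}{S_d})$ with $\tilde{q}$ by the inclusion--exclusion of Subsection~\ref{subsec:upper-bound-largest-s}. Your two bookkeeping points, dropping the restriction to $\mc{R}'(\mb{n})$ and reindexing $\sigma$, make explicit what the paper leaves implicit. The degenerate-binomial worry in your last paragraph is moot: the inclusion--exclusion ranges only over $\mc{Q}\subseteq\mc{R}'(\mb{n})$, where $n_i-\max_{\mb{r}\in\mc{Q}} r_i\ge 0$ always.
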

\begin{proof}
    By Theorems \ref{thrm:wsat-eq-wdir-constant-single-s} and \ref{thrm:main-theorem}, we have
    \[
    \wsat(\tprod{\mb{s}}{\mb{n}}, \tprod{\mb{s}}{\mc{R}})
     = \prod_{i\in[d]} \binom{n_i}{s} - q(\mb{n},\mb{s},\fconv{\mc{R}}{S_d}).
    \]
    Moreover, the identity
    \[
    q(\mb{n},\mb{s},\fconv{\mc{R}}{S_d})= \lbr{\fbr{T \in \timesprod_{i \in [d]} \binom{[n_i]}{s}\ \middle|\ \exists \mb{r}\in\mc{R}\ \exists \sigma \in S_d\ \forall i \in [d]\ T_i \in \binom{[n_i] \setminus [r_{\sigma(i)} - s]}{s}}}
    \]
    follows from the inclusion--exclusion principle, by the same argument as in Subsection~\ref{subsec:upper-bound-largest-s}.
\end{proof}

\section{Conclusion}\label{sec:conclusions}

A natural question arising from Theorem~\ref{thrm:main-theorem} is whether the lower bound in equation~\eqref{eq:main-theorem} is always tight. As the following example shows, this is not always the case.

\begin{example}
Let $d = 2$, $\mc{S} = \{(1,0),\ (0,1)\}$, and $\mc{R} = \{(2,1),\ (1,2)\}$. It is easy to show that for every $\mb{n} \in \Nz^2$ with $n_1 \ge 2$ and $n_2 \ge 2$,
\[
\wdir(\tprod{\mc{S}}{\mb{n}},\ \tprod{\mc{S}}{\mc{R}}) = 2.
\]
But
\[
q(\mb{n}, \mc{S}, \mc{R}) = (n_1 - 1) + (n_2 - 1) + 1,
\]
so the lower bound from Theorem~\ref{thrm:main-theorem} gives only
\[
\wdir(\tprod{\mc{S}}{\mb{n}},\ \tprod{\mc{S}}{\mc{R}}) \ge 1.
\]
\end{example}

This leads to the following question.

\begin{question}
Let $d$, $\mb{n}$, $\mc{S}$, and $\mc{R}$ be as in Theorem \ref{thrm:main-theorem}. What are the necessary and sufficient conditions for the equality
\[
\wdir(\tprod{\mc{S}}{\mb{n}},\ \tprod{\mc{S}}{\mc{R}}) = \sum_{\mb{s} \in \mc{S}} \prod_{i \in [d]} \binom{n_i}{s_i} - q(\mb{n}, \mc{S}, \mc{R})
\]
to hold?
\end{question}

More generally, the following problem arises.

\begin{question}
Let $d$, $\mb{n}$, $\mc{S}$, and $\mc{R}$ be as in Theorem \ref{thrm:main-theorem}. What is the value of
\[\wdir(\tprod{\mc{S}}{\mb{n}},\ \tprod{\mc{S}}{\mc{R}})?\]
\end{question}

Another interesting direction is to study the relationship between colored and uncolored weak saturation numbers, as partially discussed in Section~\ref{sec:wsat-eq-wdir}.

\begin{question}
Let $d$, $\mb{n}$, $\mc{S}$, and $\mc{R}$ be as in Theorem \ref{thrm:main-theorem}. Under what conditions do we have
\[
\wsat(\tprod{\mc{S}}{\mb{n}},\ \tprod{\mc{S}}{\mc{R}}) = \wdir\bigl(\tprod{\mc{S}}{\mb{n}},\ \tprod{\mc{S}}{\fconv{\mc{R}}{\Conv(\mc{S}) \cap S_d}}\bigr)?
\]
\end{question}





    \printbibliography

\end{document}